\documentclass[psamsfonts]{amsart}

\usepackage[a4paper,bindingoffset=0.0in,left=.975in,right=.975in,top=1.50in,bottom=1.50in,footskip=.35in]{geometry}

\usepackage{amssymb,amsfonts}
\usepackage[all,arc]{xy}
\usepackage{enumerate}
\usepackage{mathrsfs}
\usepackage{amsmath}
\usepackage{setspace}
\usepackage{tikz-cd}

\newtheorem{thm}{Theorem}[section]
\newtheorem{cor}[thm]{Corollary}
\newtheorem{prop}[thm]{Proposition}
\newtheorem{lem}[thm]{Lemma}

\theoremstyle{definition}
\newtheorem{defn}[thm]{Definition}

\newtheorem{exmp}[thm]{Example}
\newtheorem{exmps}[thm]{Examples}

\theoremstyle{remark}
\newtheorem{rem}[thm]{Remark}
\newtheorem{rems}[thm]{Remarks}

\newcommand{\C}{\Bbb{C}}

\newcommand{\Q}{\Bbb{Q}}

\newcommand{\II}{\mathcal{I}}

\newcommand{\HH}{\mathcal{H}}

\newcommand{\F}{\Bbb{F}}

\newcommand{\Z}{\Bbb{Z}}

\newcommand{\m}{\frak{m}}

\newcommand{\Mod}{\text{Mod}}

\newcommand{\bb}{\bullet}

\newcommand{\ind}{\text{ind}}

\newcommand{\Hom}{\text{Hom}}

\newcommand{\Ext}{\text{Ext}}

\newcommand{\End}{\text{End}}

\newcommand{\im}{\text{im}}

\newcommand{\GL}{\text{GL}}

\newcommand{\SL}{\text{SL}}

\newcommand{\op}{\text{op}}

\newenvironment{psmallmatrix}
  {\left(\begin{smallmatrix}}
  {\end{smallmatrix}\right)}
  
\makeatletter
\let\c@equation\c@thm
\makeatother
\numberwithin{equation}{section}

\title{On the graded center of $D(G)^c$}

\author{Peter Schneider and Claus Sorensen}

\date{\today}

\begin{document}

\begin{abstract}
Let $D(G)$ denote the derived category of smooth $G$-representations on $k$-vector spaces, where $G$ is a locally pro-$p$ group
and $k$ is a field of characteristic $p$. In this paper we are primarily interested in the graded center of the subcategory of compact objects 
$Z^*(D(G)^c)$ and variants thereof. When $G$ is a $p$-adic Lie group, without proper open centralizers, we completely determine this center modulo locally nilpotent elements and give various applications.
\end{abstract}

\maketitle



\onehalfspacing

\section{Introduction}

For a $p$-adic reductive group $G$, the Bernstein decomposition makes precise how the category of smooth complex representations $\Mod_{\C}(G)$ is built from supercuspidal representations of Levi subgroups via parabolic induction. This decomposition is optimal in the sense that each block is indecomposable. One way to see this is to compute the Bernstein center $Z(\Mod_{\C}(G))$ which factors accordingly. 

One of the players in the $p$-adic Langlands program is the category $\Mod_{k}(G)$ of smooth $G$-representations on $k$-vector spaces where $k$ is now a field of characteristic $p$. 
Since $k$ is fixed throughout this paper we will suppress it from the notation and just denote this category by $\Mod(G)$. Although $p$-adic reductive groups are perhaps our main interest, we are allowing $G$ to be any locally pro-$p$ group (i.e. $G$ has an open subgroup that is pro-$p$ in the subspace topology). Examples to keep in mind are 
$G={\bf{G}}(\frak{F})$ for a linear algebraic group ${\bf{G}}$ defined over \emph{any} non-archimedean local field $\frak{F}$; in other words a finite extension of $\Q_p$ or $\Bbb{F}_p(\!(t)\!)$.

There are always 'mundane' elements of $Z(\Mod(G))$ arising from the center of the group $Z(G)$. In fact any object of $\Mod(G)$ can be promoted to a module over the completed group algebra 
\begin{equation}\label{compl}
\widehat{k[Z(G)]}:={\varprojlim}_{Z' \subset Z(G)} k[Z(G)/Z']
\end{equation}
with $Z'$ running over the compact open subgroups of $Z(G)$,
and this gives an injective homomorphism of $k$-algebras
$$
\Phi: \widehat{k[Z(G)]} \longrightarrow Z(\Mod(G)). 
$$
This is not always an isomorphism (for instance if $G$ is a non-abelian finite group). However, it turns out it \emph{is} an isomorphism in most of the cases we are interested in, such as the aforementioned groups ${\bf{G}}(\frak{F})$ provided ${\bf{G}}$ is connected (and smooth if $\frak{F}$ has characteristic $p$). More generally \cite[Thm.~1.1]{AS23} asserts that $\Phi$ is an isomorphism if $G$ has 'no proper open centralizers'. As a first auxiliary result of this article, in Proposition \ref{addendum} we give an addendum to this Theorem (under the same hypotheses) by showing that $\Phi$ identifies the topological nilreduction $\widehat{k[Z(G)]}_\text{red}$ with $Z(\Mod(G)^\text{fg})$ modulo locally nilpotent elements. The subcategory of finitely generated representations need not be an abelian category (unless we are in the rare situation where $\Mod(G)$ is locally Noetherian) but it is nevertheless additive and it makes sense to talk about its center. 

In this article our principal goal is to better understand the graded center of the \emph{derived} category $D(G)$ of $\Mod(G)$ from \cite{Sch15} and variants thereof; particularly the subcategory $D(G)^c$ of compact objects. 



We now expand on the main results of our article. For clarity let us summarize our standing hypotheses.
\begin{itemize}
\item $k$ is a field of characteristic $p$;
\item $G$ is a locally pro-$p$ group without proper open centralizers. 
\end{itemize}
The centralizer assumption is rather mild, see \cite[Thm.~6.13]{AS23}. The basic idea to, at the minimum, get our hands on the degree \emph{zero} component $Z^0(D(G))$ is to map it to the Bernstein center 
via the natural restriction map 
$$
\pi: Z^0(D(G)) \longrightarrow Z(\Mod(G))
$$
arising from viewing $\Mod(G)$ as a full subcategory of $D(G)$. There is also a canonical map $s$ going in the other direction, but this is only a right-inverse
($\pi \circ s=\text{Id}$) in general. If $\mathcal{S}\subset D(G)$ is a triangulated subcategory, the subtlety lies in finding a corresponding subcategory
$\Mod(G)_\mathcal{S} \subset \Mod(G) \cap \mathcal{S}$ for which one can (a) control the kernel of $Z^0(\mathcal{S})\overset{\pi_\mathcal{S}}{\rightarrow} Z(\Mod(G)_\mathcal{S})$ -- and 
(b) determine the image of $\pi_\mathcal{S}$.

In the next assertion we gather Corollary \ref{maincor}, Theorem \ref{bounded}, and Corollary \ref{loccoh} in one place. 

\begin{thm}\label{introthm}
Under the above hypotheses the map $Z(G) \ni c \mapsto (s\circ\Phi)(c)$ induces the following isomorphisms of $k$-algebras.
\begin{itemize}
\item[(1)] Under the additional assumption that $G$ is a $p$-adic Lie group we have the isomorphism 
$$
\widehat{k[Z(G)]}_\text{red} \overset{\sim}{\longrightarrow} Z^0(D(G)^c)_\text{lred}.
$$ 
\item[(2)] When the category $\Mod(G)$ is locally coherent we have the isomorphism
$$
\widehat{k[Z(G)]}_\text{red} \overset{\sim}{\longrightarrow} Z^0(D_\text{fp}^b(G))_\text{lred}. 
$$
\item[(3)] Without further conditions on $G$ we have the isomorphism $\widehat{k[Z(G)]}_\text{nil} \overset{\sim}{\longrightarrow} Z^0(D^b(G))_\text{lred}$.
\end{itemize}
\end{thm}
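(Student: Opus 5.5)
The plan is to run the same argument in all three cases, following the strategy already outlined: build the composite
$$
\widehat{k[Z(G)]} \xrightarrow{\ s\circ\Phi\ } Z^0(\mathcal{S}) \xrightarrow{\ \pi_\mathcal{S}\ } Z(\Mod(G)_\mathcal{S}),
$$
where $\mathcal{S}$ is one of $D(G)^c$, $D^b_\text{fp}(G)$, $D^b(G)$. The first step is to check that $s\circ\Phi(c)$ really preserves $\mathcal{S}$ and restricts to an element of $Z^0(\mathcal{S})$. This should be clear, since $s(\Phi(c))$ acts on a complex termwise by the action of $c$. Next I would choose $\Mod(G)_\mathcal{S}$:
\begin{itemize}
\item for $D(G)^c$, the compactly induced modules $\ind_K^G(k)$ and their finite sums, for $K$ running over torsionfree pro-$p$ open subgroups;
\item for $D^b_\text{fp}(G)$, the finitely presented modules;
\item for $D^b(G)$, all of $\Mod(G)$.
\end{itemize}
For (3), Proposition \ref{addendum} together with \cite[Thm.~1.1]{AS23} identifies the image. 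For (1) and (2), it identifies $\widehat{k[Z(G)]}_\text{red}$ with $Z(\Mod(G)^\text{fg})$ modulo locally nilpotent elements. I would then check that restricting from $\Mod(G)^\text{fg}$ to the smaller subcategory $\Mod(G)_\mathcal{S}$ loses no information modulo nilpotents. For the $\ind_K^G(k)$ this is possible because every finitely generated module is a quotient of a finite sum of them, and a central element acting nilpotently on $\ind_K^G(k)$ acts nilpotently on its quotients. This gives injectivity of the induced map on reductions, and $\pi_\mathcal{S}\circ s\circ\Phi=\Phi$ gives compatibility.

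The remaining content is the kernel of $\pi_\mathcal{S}$. I want to show that every $z\in Z^0(\mathcal{S})$ acting as zero on $\Mod(G)_\mathcal{S}$ is locally nilpotent, i.e.\ that $z^n$ kills a given object $X$ for some $n=n(X)$. I would argue by induction on the length of $X$ in a suitable sense, using the triangulated structure. If $X'\to X\to X''\to$ is a triangle, $z$ kills $X'$ and $X''$, and $z$ is a natural transformation commuting with shifts, then $z^2=0$ on $X$. Indeed $z_X$ factors through $X\to X''$, and then $z_X$ composed with $z_X$ factors through the map $X''[-1]\to X'$, on which $z$ is zero. Iterating, $z^{m}$ kills any object built from $m$ pieces in $\Mod(G)_\mathcal{S}[i]$. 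This immediately handles (3), using the truncation triangles of a bounded complex, with $n$ roughly the amplitude. It handles (2) as well, since local coherence makes the cohomology of objects of $D^b_\text{fp}(G)$ finitely presented, so the same truncation argument applies.

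The main obstacle is (1): showing that every compact object is a finite iterated extension of shifts of modules in $\Mod(G)_\mathcal{S}$. Here I would use the $p$-adic Lie hypothesis. A torsionfree pro-$p$ open $K$ is a Poincaré group of finite cohomological dimension, so $k$ has a finite resolution by finitely generated free $\Omega(K)$-modules, and $\ind_K^G$ of it shows that $\ind_K^G(k)$ generates $D(G)^c$ in finitely many steps (thick closure). The difficulty is direct summands: the thick subcategory also includes retracts. My plan is that if $X$ is a summand of $Y$, where $Y$ is an extension of length $m$, then $z^m$ kills $Y$ and hence $X$ by naturality, so retracts cause no problem. Finally, the degree-zero image lands in $\widehat{k[Z(G)]}$ modulo nilpotents, which gives surjectivity in all three cases.
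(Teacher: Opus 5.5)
Your treatment of (3) misses what distinguishes it from (1) and (2). There the answer is $\widehat{k[Z(G)]}_\text{nil}$, the quotient by \emph{actual} nilpotents, and Proposition \ref{addendum} is the wrong tool: it yields the topological nilreduction $\widehat{k[Z(G)]}_\text{red}$. Your truncation argument correctly reduces (3) to computing $Z(\Mod(G))_\text{lred}$, and [AS23, Thm.~1.1] gives $\Phi:\widehat{k[Z(G)]}\overset{\sim}{\to} Z(\Mod(G))$. But you never determine which $x$ act locally nilpotently on all of $\Mod(G)$, and your closing sentence treats all three cases uniformly ``modulo nilpotents''. These elements are not the topologically nilpotent ones. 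For $G=\Z_p$ the element $g-1\in k[\![\Z_p]\!]$ is topologically nilpotent, yet $(g-1)^{p^n}=g^{p^n}-1\neq 0$ on $C^\infty(\Z_p,k)$ for every $n$. So here $Z^0(D^b(G))_\text{lred}\simeq k[\![X]\!]$ while $\widehat{k[Z(G)]}_\text{red}=k$. The missing step is that a locally nilpotent $t\in Z(\Mod(G))$ is already nilpotent, because it is nilpotent on a single object on which $\Phi$ is faithful. The paper takes $C_c^\infty(G,k)$ and uses injectivity of $\Phi_C$ from [AS23]. A generator $Y$ also works, since $t_Y^n=0$ forces $t^n=0$ on every quotient of a coproduct of copies of $Y$. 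This gives $Z(\Mod(G))_\text{lred}=Z(\Mod(G))_\text{nil}\simeq\widehat{k[Z(G)]}_\text{nil}$.

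In (1) and (2) the surjectivity claim is also unsupported as written. For $z\in Z^0(\mathcal{S})$, $\pi_\mathcal{S}(z)$ is only a central element of the small additive category $\Mod(G)_\mathcal{S}$ (finite sums of $\Bbb{X}_K$, resp.\ $\Mod(G)^\text{fp}$). [AS23] says nothing about it until you know it extends to $Z(\Mod(G))$; your quotient argument only proves injectivity of $Z(\Mod(G)^\text{fg})_\text{lred}\to Z(\Mod(G)_\mathcal{S})_\text{lred}$. The needed input is Lemma \ref{groth}. Since $\Mod(G)_\mathcal{S}$ contains the family of generators $\{\Bbb{X}_K\}$, restriction $Z(\Mod(G))\to Z(\Mod(G)_\mathcal{S})$ is surjective. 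Hence $\pi_\mathcal{S}(z)=\pi_\mathcal{S}((s\circ\Phi)(x))$ for some $x$, and your kernel argument applied to $z-(s\circ\Phi)(x)$ finishes. Separately, a free $\Omega(K)$-resolution of $k$ does not by itself show $\text{Thick}_{D(G)}(\Bbb{X}_K)=D(G)^c$. That needs $\Bbb{X}_K$ to be a compact generator of $D(G)$ together with Neeman's theorem, which is what [Sch15, Rem.~10] provides. With these repairs your argument is essentially the paper's: Lemma \ref{nilp} on the thick envelope, Proposition \ref{DbC} for the bounded categories, and Lemmas \ref{groth}, \ref{serre} with [AS23] for the image.
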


Let us briefly explain the meaning of the various subscripts in the statement. Writing 'red' means we mod out by the ideal of topologically nilpotent elements, and 'nil' means modding out the actual nilpotent elements. Finally 'lred' on the right-hand side indicates we divide
by the ideal of locally nilpotent elements. We refer to the main text for any unexplained notation. 

Theorem \ref{bounded} also contains partial results on the graded center of $D^+(G)$ and the whole category $D(G)$. As a matter of fact, we go to great lengths to prove that the kernel
$$
\ker\big(Z^*(D(G))\overset{\text{res}}{\longrightarrow} Z^*(D^+(G))\big)
$$
has square zero provided $G$ is a $p$-adic Lie group. This involves establishing several results, for which we have not been able to find a suitable reference in the literature, in the generality of Grothendieck categories satisfying Roos's condition AB4*-$d$ (i.e. taking products has finite cohomological amplitude). Hopefully this will also prove useful to researchers in other areas. 

In Section \ref{appblock} we apply part (1) of Theorem \ref{introthm} to determine the decomposition of $D(G)$ into indecomposable blocks. The corresponding result for $\Mod(G)$ follows straightforwardly from \cite{AS23} and we expose the details. Besides our standing hypotheses we assume that $Z(G)$ is topologically finitely generated (as is the case for $p$-adic reductive groups for example). In particular $Z(G)$ has a unique maximal compact subgroup, which we factor as $P\times Q$ where $P$ is the Sylow pro-$p$ subgroup and $Q$ is a finite group of order prime-to-$p$. Thus $k[Q]\simeq\prod_{i \in I} k_i$ is a product of finitely many finite field extensions $k_i/k$ and we let $(e_i)_{i \in I}$ denote the collection of primitive idempotents in $k[Q]$. The analogue of the Bernstein decomposition is 
$$
\Mod(G)=\prod_{i \in I} \Mod^{e_i}(G)
$$ 
where the block $\Mod^{e_i}(G)$ consists of representations on which $e_i$ acts as the identity. The key point here is that $\Mod^{e_i}(G)$
is indecomposable. In other words the factorization of $\Mod(G)$ mirrors that of $k[Q]$. Informally this is saying there is no 'unapparent' way to decompose $\Mod(G)$ into smaller pieces. 

If $G$ is a $p$-adic Lie group, still with a topologically finitely generated center, we derive a parallel result for $D(G)$. The indecomposable blocks are 
$D^{e_i}(G):=D(\Mod^{e_i}(G))$. Again, we emphasize that the main point here is the indecomposability, which we prove by passing to subcategories of compact objects and employing our $Z^0(D(G)^c)_\text{lred}$ calculation. As an example, $D(G)$ itself is indecomposable if $Z(G)$ is trivial.  

In Section \ref{hoch} we fix a pro-$p$ open subgroup $U\subset G$ (for a general $G$ as above) and consider the Yoneda algebra 
$\Ext_{\Mod(G)}^*(\Bbb{X}_U, \Bbb{X}_U)^\text{op}$ where $\Bbb{X}_U:=k[G/U]$. The degree zero part is the Hecke algebra $\mathcal{H}_U$. We are interested in the degree zero part of the graded center $Z^0(\Ext_{\Mod(G)}^*(\Bbb{X}_U, \Bbb{X}_U))$. By definition this is the subalgebra of $Z(\mathcal{H}_U)$ comprising the elements that commute with every element of the full $\Ext$-algebra (in all degrees) and it is typically much smaller than $Z(\mathcal{H}_U)$. There is an evident way to at least produce a portion of the center of interest,
namely $Z(G) \ni c \mapsto \tau_c:=\text{char}_{UcU}$ extends to an injective homomorphism 
$$
k[Z(G)/Z(G)\cap U]\hookrightarrow Z^0(\Ext_{\Mod(G)}^*(\Bbb{X}_U, \Bbb{X}_U)).
$$
For split $p$-adic reductive groups, and with $U$ being a pro-$p$ Iwahori subgroup, this was shown to also be surjective in Bodon's M\"{u}nster PhD Thesis \cite{Bod22}.
By contrast, in general there are constraints on how large the image can be: As we observe in (the proof of) Theorem \ref{imchar} each $\tau_c$ lies in the image of the characteristic map 
$$
\chi_U: HH^0(\HH_U^\bb) \longrightarrow Z^0(\Ext_{\Mod(G)}^*(\Bbb{X}_U, \Bbb{X}_U)).
$$
Here $\HH_U^\bb$ is a differential graded Hecke algebra, as defined in \cite{Sch15}, and $HH^0$ is Hochschild cohomology in degree zero. Additionally we prove that the resulting map $k[Z(G)/Z(G)\cap U] \rightarrow \im(\chi_U)$ becomes an isomorphism after quotienting out the nilpotents. In particular this gives a negative answer to Question 4.4 in \cite{Har16}; 
the canonical map $HH^0(\HH_U^\bb) \rightarrow Z(\mathcal{H}_U)$ is not always surjective.


\section{Some ring-theoretic preliminary remarks}

The center $Z(R)$ of a ring $R$ is isomorphic to the center of the category of $R$-modules. In the derived setting the situation is slightly more complicated. In the derived category of $R$-modules $D(R)$ the subcategory of compact objects $D(R)^c$ is given by the perfect complexes. There is a natural way to associate a graded center with any triangulated category, and the degree zero piece $Z^0(D(R)^c)$ consists of the natural transformations $t: \text{Id}\rightarrow \text{Id}$ which commute with the suspension functor $\Sigma$. There is now a canonical injective homomorphism $Z(R)\overset{\frak{c}}{\longrightarrow} Z^0(D(R)^c)$ with a left inverse given by evaluation at $R$. However, this is not always an isomorphism. For instance, $\frak{c}$ is not surjective even for the dual numbers over a field $R=k[x]/(x^2)$ as \cite[Rk.~4.15]{Rou10} shows. On the other hand, by \cite[Prop.~4.13]{Rou10} $\frak{c}$ is an isomorphism when $R$ is without zero-divisors. For a general ring $R$ (the proof of) \cite[Lem.~4.10]{Rou10} shows that $\frak{c}$ induces an isomorphism $Z(R)_\text{red} \overset{\sim}{\longrightarrow} Z^0(D(R)^c)_\text{lred}$. Here the subscript 'lred' means we divide out by the ideal of locally nilpotent elements, i.e. the natural transformations $t$ as above inducing a nilpotent endomorphism of every object in $D(R)^c$.

Instead of a ring we can consider a differential graded algebra $R^\bb$. For the purpose of these preliminary remarks we deem its center to be the zeroth Hochschild cohomology 
$HH^0(R^\bb)$. There is an analogous homomorphism $HH^0(R^\bb)\overset{\frak{c}}{\longrightarrow} Z^0(D(R^\bb)^c)$ and we like to think of the target as a 'good approximation' to the center of $R^\bb$. In contrast to the Hochschild cohomology, $Z^0(D(R^\bb)^c)$ is actually often computable (at least up to locally nilpotents). One of the main goals of this article 
is the computation of this ring when $R^\bb$ is a differential graded Hecke algebra attached to a pair $(G,U)$ comprising a locally profinite group $G$ and a compact open subgroup $U$ thereof. In fact we prefer to stay on the representation-theoretic side of the picture and directly (and equivalently) compute $Z^0(-)_\text{lred}$ of the thick envelope $\text{Thick}_{D(G)}(\text{ind}_U^G(1))$ without invoking the dg Hecke algebra. In the case where $G$ is a $p$-adic Lie group, and our suppressed coefficient field has characteristic $p$, this thick envelope is precisely $D(G)^c$ provided $U$ is a torsion-free pro-$p$ open subgroup. This is our motivating example. 

\section{General results on graded centers for locally pro-$p$ groups}

\subsection{General results about the center of a Grothendieck category}

The next result is an outgrowth of \cite[Appendix]{AS23}. 

\begin{lem}\label{groth}
Let $\mathcal{C}$ be a Grothendieck category, and let $\mathcal{A}$ be a full additive subcategory containing a family of generators for $\mathcal{C}$. Then the restriction map
$Z(\mathcal{C})\rightarrow Z(\mathcal{A})$ is surjective. 
\end{lem}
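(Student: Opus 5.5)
Given $z=(z_A)_{A\in\mathcal{A}}\in Z(\mathcal{A})$, we must build a natural endomorphism $\tilde z$ of $\mathrm{Id}_{\mathcal{C}}$ restricting to $z$. The plan is to define $\tilde z$ via presentations by objects of $\mathcal{A}$. Because $\mathcal{A}$ contains a generating family, every object $X$ of $\mathcal{C}$ admits an epimorphism $\bigoplus_{i\in I}A_i\twoheadrightarrow X$ with $A_i\in\mathcal{A}$. Direct sums need not lie in $\mathcal{A}$, but $z$ still extends to sums: on $\bigoplus_i A_i$ we set $z_{\oplus}:=\bigoplus_i z_{A_i}$. This endomorphism commutes with every morphism $f:\bigoplus_i A_i\to\bigoplus_j B_j$, where $A_i,B_j\in\mathcal{A}$. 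To see this, note that by the universal property of the sum it suffices to check on each summand $A_i$. The restriction $f|_{A_i}:A_i\to\bigoplus_j B_j$ need not factor through a finite subsum in general. However, if we compose with the projection-free description, namely $\bigoplus_j B_j\subset\prod_j B_j$ (a monomorphism, by AB5 in a Grothendieck category), then equality can be tested componentwise on the maps $A_i\to B_j$, where naturality of $z$ in $\mathcal{A}$ applies.

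Next, for a general $X$ we choose a presentation $P_1\xrightarrow{d}P_0\xrightarrow{\varepsilon}X\to 0$, where $P_0,P_1$ are sums of objects of $\mathcal{A}$. Here $P_1$ is obtained by covering $\ker\varepsilon$. Since $z_{P_0}\circ d=d\circ z_{P_1}$, the map $z_{P_0}$ preserves $\operatorname{im} d=\ker\varepsilon$ and hence descends to a unique $\tilde z_X:X\to X$ with $\tilde z_X\varepsilon=\varepsilon z_{P_0}$. In fact only the epimorphism $\varepsilon$ is needed. Indeed, $z_{P_0}$ maps $\ker\varepsilon$ into itself: any subobject of $\ker \varepsilon$ that is the image of some $A\to \ker\varepsilon$ is carried by $z_{P_0}$ to the image of $A\xrightarrow{z_A}A\to\ker\varepsilon$, by naturality on sums. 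Since such images exhaust $\ker\varepsilon$, the claim follows.

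The remaining checks are as follows.
(i) Independence of the choice of $\varepsilon$. Given two epimorphisms $\varepsilon,\varepsilon'$, the map $P_0\oplus P_0'\to X$ is again such an epimorphism, so it suffices to compare with a refinement. This comparison is immediate from compatibility of $z$ on sums.
(ii) Naturality. For $g:X\to Y$ with cover $\varepsilon_Y:Q_0\to Y$, we cover the pullback $X\times_Y Q_0$ (or simply $X$) by a sum $P_0$ admitting a lift $h:P_0\to Q_0$ with $\varepsilon_Y h=g\varepsilon_X$. Such a lift exists because each $A\to Y$ lifts after covering: take $P_0$ to be a sum-cover of the fibre product $X\times_Y Q_0$, which maps epimorphically onto $X$ since $\varepsilon_Y$ is an epimorphism and pullbacks of epimorphisms are epimorphisms in an abelian category. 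Then $g\tilde z_X\varepsilon_X=\varepsilon_Y h z_{P_0}=\varepsilon_Y z_{Q_0}h=\tilde z_Y g\varepsilon_X$, and cancelling the epimorphism gives naturality.
(iii) Restriction. For $A\in\mathcal{A}$ we use $\varepsilon=\mathrm{id}_A$, which gives $\tilde z_A=z_A$.

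The main obstacle is the first step, namely that $\bigoplus z_{A_i}$ commutes with arbitrary maps between sums when $A_i$ is not compact. The plan is to handle it via the monomorphism $\bigoplus_j B_j\hookrightarrow\prod_j B_j$ (valid in Grothendieck categories) and componentwise naturality, as described above. If that route causes trouble, a fallback is to restrict the covers to those whose components have images forming a directed union, and to use AB5 exactness of filtered colimits.
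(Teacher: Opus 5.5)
Your proposal is correct and takes essentially the same route as the paper: you extend $z$ to coproducts using the monomorphism $\bigoplus_j B_j\hookrightarrow\prod_j B_j$ and componentwise naturality, descend along an epimorphism from a coproduct by showing that the kernel is preserved, compare two covers through their sum, and cancel epimorphisms to get naturality. The only difference is in the naturality step. You obtain the lift $h$ by covering the fibre product $X\times_Y Q_0$, whereas the paper keeps an arbitrary cover $\beta_1$ of the source and enlarges the cover of the target to $\tilde{\beta}_2=\phi\circ\beta_1+\beta_2$ on $C_1\oplus C_2$, so that the coprojection of $C_1$ serves as the lift (this enlargement is also what your parenthetical ``or simply $X$'' would need in order to work).
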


\begin{proof}
We give ourselves a tuple $(t_V)_{V \in \mathcal{A}}$ in $Z(\mathcal{A})$ and our goal is to extend it compatibly to all objects of the ambient category $\mathcal{C}$. 

As an intermediate step we first extend the tuple to objects which are coproducts of objects from $\mathcal{A}$. Thus, suppose $V$ is a coproduct 
$\bigoplus_{i \in I}V_i$ of objects $V_i \in \mathcal{A}$, with coprojections $\sigma_i : V_i \rightarrow V$. We define $t_V$ to be the unique morphism $t_V: V\rightarrow V$ 
such that $t_V \circ \sigma_i=\sigma_i \circ t_{V_i}$ for all $i$. We must verify that $t_V$ is independent of choices and natural in $V$. To see this, suppose $V'$ is a coproduct 
$\bigoplus_{j \in J}V_j'$ with $V_j' \in \mathcal{A}$ and let $\sigma_j': V_j' \rightarrow V'$ denote the coprojections. As before, $t_{V'}: V' \rightarrow V'$ is the morphism with the property that $t_{V'}\circ \sigma_j'=\sigma_j' \circ t_{V_j'}$ for all $j$. Given a morphism $\gamma: V \rightarrow V'$ our task is to check that $t_{V'}\circ \gamma=\gamma \circ t_V$. 
(Taking $\gamma$ to be the identity in particular shows $t_V$ is independent of how we exhibit $V$ as a coproduct.) Equivalently, we need to check that
$t_{V'}\circ \gamma \circ \sigma_i=\gamma \circ t_V \circ \sigma_i$ for all $i$. 

For the purpose of this proof, let $\pi_j': V' \rightarrow V_j'$ denote the morphism for which 
$$
\pi_j' \circ \sigma_{l}'=\begin{cases} \text{Id}_{V_j'} & j=l \\ 0 & j \neq l. \end{cases}
$$
Since $\mathcal{C}$ is Grothendieck, the resulting morphism $V'=\bigoplus_{j \in J}V_j'\rightarrow \prod_{j \in J}V_j'$ is a monomorphism (the AB5 condition allows us to reduce to the case of finite index sets); see \cite[Exc.~1, p.~133]{Ste75}. It therefore suffices to prove that
$$
\pi_j' \circ t_{V'}\circ \gamma \circ \sigma_i=\pi_j' \circ \gamma \circ t_V \circ \sigma_i
$$
for all $i$ and $j$. As a preliminary observation we note that $\pi_j' \circ t_{V'}=t_{V_j'}\circ \pi_j'$ by precomposing both sides with $\sigma_l'$. Now, 
by unwinding the definitions we see that indeed
$$
\pi_j' \circ t_{V'}\circ \gamma \circ \sigma_i=
t_{V_j'}\circ (\pi_j' \circ \gamma \circ \sigma_i)=(\pi_j' \circ \gamma \circ \sigma_i) \circ t_{V_i}=
\pi_j' \circ \gamma \circ t_V \circ \sigma_i.
$$
In the second step we used the fact that our given tuple is in $Z(\mathcal{A})$ and therefore $\pi_j' \circ \gamma \circ \sigma_i$ intertwines $t_{V_i}$ and 
$t_{V_j'}$. This concludes the proof of the intermediate step. 

Next we have to extend the tuple to all of $\mathcal{C}$. For convenience we let $\frak{C}$ denote the full subcategory of $\mathcal{C}$ whose objects are coproducts of objects from 
$\mathcal{A}$. We just extended our given tuple to an element  $(t_V)_{V \in \frak{C}}$ in $Z(\frak{C})$. By our hypothesis on $\mathcal{A}$, any $V \in \mathcal{C}$ admits an epimorphism $\beta: C \rightarrow V$ for some $C \in \frak{C}$; see \cite[Sect.~IV.6]{Ste75}. We choose a kernel $\kappa: \ker(\beta) \rightarrow C$ and an epimorphism
$\beta': C' \rightarrow \ker(\beta)$ with $C' \in \frak{C}$. By the naturality of $(t_V)_{V \in \frak{C}}$ the solid diagram 
$$
\begin{tikzcd}
C' \arrow[r, "\beta'"] \arrow[d, "t_{C'}"]
& \ker(\beta) \arrow[r, "\kappa"] \arrow[d, dashrightarrow]& C \arrow[d, "t_C"] \\
C' \arrow[r, "\beta'"]
& \ker(\beta) \arrow[r, "\kappa"] & C
\end{tikzcd}
$$
commutes. In particular $\beta \circ t_C \circ \kappa=0$ since $\beta'$ is an epimorphism and 
$$
\beta \circ t_C \circ \kappa\circ \beta'=\beta \circ \kappa \circ \beta' \circ t_{C'}= 0 \circ \beta' \circ t_{C'}=0.
$$
Therefore, by the universal property of $\kappa$, there is a unique dashed arrow as in the diagram which makes the right square commute. Also, since $\beta: C \rightarrow V$ is an epimorphism, $\beta$ is a cokernel of $\kappa$ (as $\mathcal{C}$ is abelian). We conclude from $\beta \circ t_C \circ \kappa=0$ that there is a unique morphism $t_V: V \rightarrow V$ such that $t_V \circ \beta=\beta\circ t_C$. 

To see that this $t_V$ is independent of the choice of $\beta$, suppose we are given two epimorphisms $\beta_i: C_i \rightarrow V$ ($i=1,2$) with $C_i \in \frak{C}$, and let 
$t_V^{(i)}: V \rightarrow V$ be the morphism satisfying the relation $t_V^{(i)} \circ \beta_i=\beta_i\circ t_{C_i}$ as in the previous paragraph. If there is a morphism
$\delta: C_1 \rightarrow C_2$ such that $\beta_1=\beta_2\circ \delta$, we infer that $t_V^{(1)}=t_V^{(2)}$ by precomposing both sides with the epimorphism $\beta_1$, i.e.
$$
t_V^{(1)} \circ \beta_1=\beta_1 \circ t_{C_1}=\beta_2\circ \delta \circ t_{C_1}=\beta_2 \circ t_{C_2}\circ \delta=t_V^{(2)} \circ \beta_2 \circ \delta=t_V^{(2)} \circ \beta_1.
$$
In general (when there is no such $\delta$) we reduce to this special case by considering the diagram
$$
\begin{tikzcd}
C_1 \arrow[rd, "\beta_1"'] \arrow[r, "\delta_1"] & C_1\oplus C_2 \arrow[d] & C_2 \arrow[l, "\delta_2"'] \arrow[ld, "\beta_2"]\\
& V
\end{tikzcd}
$$
which shows that both $t_V^{(1)}$ and $t_V^{(2)}$ agree with the morphism defined by middle epimorphism in the diagram. 

Finally we must establish that our extended tuple $(t_V)_{V \in \mathcal{C}}$ indeed is an element of $Z(\mathcal{C})$, i.e. that $t_{V_2}\circ \phi=\phi \circ t_{V_1}$ for all morphisms
$\phi: V_1 \rightarrow V_2$. We choose epimorphisms $\beta_i: C_i \rightarrow V_i$ ($i=1,2$) with $C_i \in \frak{C}$ as above (but now $\beta_i$ has a different meaning, of course). 
Then the defining property of $t_{V_i}$ is the identity $t_{V_i} \circ \beta_i=\beta_i\circ t_{C_i}$. However, by our previous remarks we may replace $\beta_2$ by 
$\tilde{\beta}_2:=\phi \circ \beta_1 +\beta_2$ in this identity, as in the following commutative diagram:
$$
\begin{tikzcd}
C_1 \arrow[r, "t_{C_1}"] \arrow[d, "\beta_1"]
& C_1 \arrow[d, "\beta_1"] \arrow[r, "\delta_1"] & C_1\oplus C_2 \arrow[r, "t_{C_1\oplus C_2}"]\arrow[d, "\tilde{\beta}_2"] & C_1\oplus C_2 \arrow[d, "\tilde{\beta}_2"]\\
V_1 \arrow[r, "t_{V_1}"] \arrow[r] & V_1 \arrow[r, "\phi"] & V_2 \arrow[r, "t_{V_2}"]
& V_2.
\end{tikzcd}
$$
Note that $t_{C_1\oplus C_2}\circ \delta_1=\delta_1 \circ t_{C_1}$, from which we deduce that $t_{V_2}\circ \phi\circ \beta_1=\phi \circ t_{V_1}\circ \beta_1$ by inspecting the diagram. 
Since $\beta_1$ is an epimorphism we conclude that indeed $t_{V_2}\circ \phi=\phi \circ t_{V_1}$ as desired. 
\end{proof}

We say that $(t_V)_{V \in \mathcal{C}}\in Z(\mathcal{C})$ is locally nilpotent if $t_V$ is a nilpotent element of $Z(\text{End}_{\mathcal{C}}(V))$ for all $V \in \mathcal{C}$. The locally nilpotent tuples clearly form an ideal of $Z(\mathcal{C})$ and we let $Z(\mathcal{C})_\text{lred}$ denote the quotient ring. Similarly for $\mathcal{A}$. We let $\text{Serre}_\mathcal{C}(\mathcal{A})$ denote the smallest Serre subcategory of $\mathcal{C}$ containing $\mathcal{A}$. 

\begin{lem}\label{serre}
In the setup of Lemma \ref{groth}, the restriction map induces an isomorphism 
$$
Z(\text{Serre}_\mathcal{C}(\mathcal{A}))_\text{lred} \overset{\sim}{\longrightarrow}Z(\mathcal{A})_\text{lred}.
$$
\end{lem}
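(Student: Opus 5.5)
We need to show that the restriction map $Z(\text{Serre}_\mathcal{C}(\mathcal{A}))\rightarrow Z(\mathcal{A})$ is surjective, and that its kernel consists of locally nilpotent elements. Surjectivity modulo locally nilpotents then follows at once. Conversely, an element of $Z(\text{Serre}_\mathcal{C}(\mathcal{A}))$ that restricts to a locally nilpotent tuple on $\mathcal{A}$ is itself locally nilpotent: it differs from an element of the kernel only by something in $Z(\mathcal{A})$, so this reduces to a filtration argument of the same kind as for the kernel. We argue it directly below.

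For surjectivity, start from Lemma \ref{groth}. It extends any $(t_V)_{V\in\mathcal{A}}\in Z(\mathcal{A})$ to an element of $Z(\mathcal{C})$, and restricting that element to the full subcategory $\text{Serre}_\mathcal{C}(\mathcal{A})$ gives a preimage. So the restriction map is already surjective before passing to lred.

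The main step is the nilpotence statement. Describe $\text{Serre}_\mathcal{C}(\mathcal{A})$ explicitly as the class of objects $V$ admitting a finite filtration $0=V_0\subset V_1\subset\cdots\subset V_n=V$ whose graded pieces $V_i/V_{i-1}$ are subquotients of objects of $\mathcal{A}$. This class contains $\mathcal{A}$ and is closed under subobjects and quotients, since a filtration induces one on a subobject or quotient, and subquotients of subquotients are subquotients. It is closed under extensions by concatenating filtrations. It is therefore the smallest Serre subcategory containing $\mathcal{A}$.

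Now let $t\in Z(\text{Serre}_\mathcal{C}(\mathcal{A}))$ be such that $t_A$ is nilpotent for every $A\in\mathcal{A}$.

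First, if $W=B/B'$ with $B'\subset B\subset A$ and $A\in\mathcal{A}$, take $m$ with $t_A^m=0$. Naturality with respect to $B\hookrightarrow A$ gives $t_B^m=0$, because a monomorphism $\iota$ satisfies $\iota t_B^m=t_A^m\iota=0$. Naturality with respect to the epimorphism $B\twoheadrightarrow W$ then gives $t_W^m=0$.

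Second, if $V$ has a filtration as above with $t$ vanishing to order $m_i$ on $V_i/V_{i-1}$, naturality shows that $t_V$ preserves each $V_i$ and induces $t_{V_i/V_{i-1}}$ on the graded pieces. Hence $t_V^{m_1+\cdots+m_n}=0$. Consequently $t_V$ is nilpotent for every object $V$ of $\text{Serre}_\mathcal{C}(\mathcal{A})$.

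Applied to a $t$ in the kernel, so that every $t_A=0$, this shows that the kernel is locally nilpotent. Applied to any $t$ whose restriction is locally nilpotent, it shows that the induced map on lred quotients is injective. Together with surjectivity, the map is an isomorphism.

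The only point needing care is the description of the Serre envelope via finite filtrations with subquotient graded pieces, in particular its closure under subobjects. For this, intersect the filtration with the subobject $S$: by the second isomorphism theorem, $(S\cap V_i)/(S\cap V_{i-1})$ embeds into $V_i/V_{i-1}$, and a subobject of a subquotient is again a subquotient. This is routine in an abelian category.
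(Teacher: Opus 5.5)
Your proof is correct and uses the same two facts as the paper's. Naturality against monomorphisms and epimorphisms passes nilpotence of $t$ to subobjects and quotients, and a filtration argument passes it to extensions. The difference is only in packaging: the paper never describes $\text{Serre}_\mathcal{C}(\mathcal{A})$ explicitly, but shows that the class of objects on which $t$ is nilpotent is itself a Serre subcategory (with bound $2n$ for an extension), whereas you build the envelope from finite filtrations with subquotient graded pieces, which costs the closure verification but gives the explicit nilpotency bound $m_1+\cdots+m_n$.
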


\begin{proof}
The surjection $Z(\mathcal{C})\rightarrow Z(\mathcal{A})$ from Lemma \ref{groth} factors through $Z(\text{Serre}_\mathcal{C}(\mathcal{A})) \rightarrow Z(\mathcal{A})$ which is therefore also surjective. This obviously remains true after passing to lred. 

Given a $(t_V)_{V \in \text{Serre}_\mathcal{C}(\mathcal{A})}$ in the center of $\text{Serre}_\mathcal{C}(\mathcal{A})$ which is nilpotent on all objects from $\mathcal{A}$, we must show that $t_V$ is nilpotent for all $V \in \text{Serre}_\mathcal{C}(\mathcal{A})$. For that purpose, it suffices to prove that the auxiliary subcategory
$$
\mathcal{S}:=\{V \in \text{Serre}_\mathcal{C}(\mathcal{A}): \text{$t_V$ is nilpotent}\}
$$
is Serre in $\mathcal{C}$. So, let $0 \rightarrow V' \overset{\alpha}{\rightarrow} V \overset{\beta}{\rightarrow} V'' \rightarrow 0$ be a short exact sequence in $\mathcal{C}$. First, if $V \in \mathcal{S}$ then at least $V',V''\in \text{Serre}_\mathcal{C}(\mathcal{A})$, and from the commutative diagram
$$
\begin{tikzcd}
0 \arrow[r] & V' \arrow[r,"\alpha"] \arrow[d, "t_{V'}^n"]
& V \arrow[d, "t_V^n"] \arrow[r, "\beta"] & V'' \arrow[d, "t_{V''}^n"] \arrow[r] & 0\\
0 \arrow[r] & V' \arrow[r, "\alpha"]
& V \arrow[r, "\beta"] & V'' \arrow[r] & 0
\end{tikzcd}
$$
it is clear that if $t_V^n=0$, for some $n$ depending on $V$, then both $t_{V'}^n=0$ and $t_{V''}^n=0$ (since $\alpha$ is a monomorphism and $\beta$ is an epimorphism, respectively). Thus in fact $V', V'' \in \mathcal{S}$. 

Conversely, now we assume that $V', V'' \in \mathcal{S}$ (and deduce that $V \in \mathcal{S}$). Once and for all, choose an $n$ such that $t_{V'}^n=0$ and $t_{V''}^n=0$. Note that at least $V \in \text{Serre}_\mathcal{C}(\mathcal{A})$ so $t_V$ is defined. Our goal is to show that $t_V^{2n}=0$. From the previous diagram one sees that $t_V^n$ factors through 
$V \rightarrow \text{cok}(\alpha)$ and also through $\ker(\beta)\rightarrow V$. This yields the diagram
$$
\begin{tikzcd}
& \text{cok}(\alpha) \arrow[dr, dashrightarrow] & \\
V \arrow[r, "t_V^n"] \arrow[dr, dashrightarrow]& V \arrow[r, "t_V^n"] \arrow[u] & V \\
& \ker(\beta) \arrow[u]& 
\end{tikzcd}
$$
in which the vertical composition is zero by the very definition of exactness. Hence $t_V^{2n}=t_V^n \circ t_V^n=0$. 
\end{proof}

In particular, if $\mathcal{A}_1, \mathcal{A}_2 \subset \mathcal{C}$ are two subcategories both satisfying the hypotheses of Lemma \ref{groth}, and we assume they generate the same Serre subcategory of $\mathcal{C}$, then there are isomorphisms
$$
Z(\mathcal{A}_1)_\text{lred} \overset{\sim}{\longleftarrow} Z(\text{Serre}_\mathcal{C}(\mathcal{A}_1))_\text{lred}=
Z(\text{Serre}_\mathcal{C}(\mathcal{A}_2))_\text{lred} \overset{\sim}{\longrightarrow}Z(\mathcal{A}_2)_\text{lred}.
$$


\subsection{On the center of the category of finitely generated representations}

We adopt the setup in \cite{AS23}. Thus $G$ denotes a locally profinite group which admits a pro-$p$ open subgroup, $k$ is a field of characteristic $p$, and $\Mod(G)$ is the category of smooth $G$-representations on $k$-vector spaces. Usually $U$ will denote a compact open subgroup of $G$, in which case 
$\Bbb{X}_U=\text{ind}_U^G(1)=k[G/U]$ denotes the compactly induced representation.

\begin{lem}
For a fixed compact open subgroup $U \subset G$, the set of representations $\{\Bbb{X}_{U'}\}_{U'\subset U}$ is a family of generators for $\Mod(G)$.
\end{lem}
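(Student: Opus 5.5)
The plan is to use the characterization of a generating family in a Grothendieck category: a family $\{X_\lambda\}$ generates if, for every nonzero morphism $f: V \to W$, there is some $\lambda$ and some $g: X_\lambda \to V$ with $f\circ g \neq 0$. Equivalently, every object is a quotient of a direct sum of copies of the $X_\lambda$. I will verify the second form directly, because it is the one used in Lemma \ref{groth} (via \cite[Sect.~IV.6]{Ste75}).

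The key tool is Frobenius reciprocity for compact induction from an open subgroup. For any $V \in \Mod(G)$ and compact open $U'$ there is a natural isomorphism
$$
\Hom_{G}(\Bbb{X}_{U'}, V) \cong \Hom_{U'}(1, V) = V^{U'}.
$$
It sends $v \in V^{U'}$ to the unique $G$-map $k[G/U'] \to V$ with $gU' \mapsto gv$; this is well defined precisely because $v$ is $U'$-fixed.

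Now let $V$ be a smooth representation and $v \in V$. By smoothness the stabilizer of $v$ is open, so it contains some compact open subgroup $U'$. Intersecting with $U$ (which is open), we may assume $U' \subset U$. The element $v$ then defines a morphism $\phi_v: \Bbb{X}_{U'} \to V$ whose image contains $v$ (the image of the coset $1\cdot U'$). Taking the direct sum over all $v \in V$ gives a morphism
$$
\bigoplus_{v\in V} \Bbb{X}_{U'_v} \longrightarrow V.
$$
This morphism is surjective, since every $v$ lies in its image. So every object is a quotient of a coproduct of members of the family, and the family generates.

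I expect no real obstacle here. The only points to be careful about are that the subgroups $U' \subset U$ form a neighbourhood basis of $1$ consisting of compact open subgroups, which holds because $U$ is compact open in a locally profinite group, and that coproducts in $\Mod(G)$ are ordinary direct sums of smooth representations. The equivalence with the nonzero-morphism criterion is standard: if $f\neq0$, pick $v$ with $f(v)\neq 0$, and then $f\circ\phi_v\neq 0$.
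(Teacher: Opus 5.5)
Your proof is correct and is essentially the paper's argument: for each vector choose an open $U'\subset U$ fixing it, use Frobenius reciprocity to get a map $\Bbb{X}_{U'}\to V$ sending $\text{char}_{U'}$ to that vector, and sum these maps to get an epimorphism from a coproduct. The only cosmetic difference is that the paper sums over a spanning set $\{v_i\}$ of $V$, whereas you sum over all $v\in V$.
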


\begin{proof}
Suppose $V \in \Mod(G)$ is the $k$-span of $\{v_i: i \in I\}$. For each $i$ pick an open subgroup $U_i \subset U$ such that $v_i \in V^{U_i}$. Frobenius reciprocity gives a morphism
$\Bbb{X}_{U_i}\rightarrow V$ sending $\text{char}_{U_i}$ to $v_i$. Their sum is a surjective morphism $\bigoplus_{i \in I} \Bbb{X}_{U_i}\twoheadrightarrow V$.
\end{proof}

$\Mod(G)$ is a Grothendieck category and we now consider the Serre subcategory generated by a single representation $\Bbb{X}_U$, with $U$ pro-$p$. It turns out this subcategory is independent of the choice of $U$, and it coincides with the Serre subcategory generated by the finitely generated objects $\Mod(G)^\text{fg}$ (which of course itself is Serre if $\Mod(G)$ is locally noetherian, see \cite[Prop.~4.2, p.~123]{Ste75}). In general $\Mod(G)^\text{fg}$ is just an additive subcategory of $\Mod(G)$. Our immediate goal is to compute its center up to locally nilpotent elements; see Proposition \ref{addendum} below.

\begin{lem}\label{fg}
Let $U \subset G$ be a pro-$p$ open subgroup. Then $\text{Serre}_{\Mod(G)}(\Bbb{X}_U)=\text{Serre}_{\Mod(G)}(\Mod(G)^\text{fg})$.
\end{lem}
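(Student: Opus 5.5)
The inclusion $\text{Serre}_{\Mod(G)}(\Bbb{X}_U)\subset\text{Serre}_{\Mod(G)}(\Mod(G)^\text{fg})$ is immediate, since $\Bbb{X}_U$ is generated by $\text{char}_U$ and hence is finitely generated. For the reverse inclusion it suffices to show that every finitely generated $V$ lies in $\mathcal{S}:=\text{Serre}_{\Mod(G)}(\Bbb{X}_U)$, because the Serre subcategory generated by $\Mod(G)^\text{fg}$ is the smallest one containing these objects.

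The first step is to reduce to the representations $\Bbb{X}_{U'}$. A finitely generated $V$ is generated by finitely many vectors $v_1,\dots,v_m$. Choosing an open subgroup $U'\subset U$ fixing all of them, Frobenius reciprocity gives, as in the proof of the previous lemma, a surjection $\Bbb{X}_{U'}^{\oplus m}\twoheadrightarrow V$. Serre subcategories are closed under finite direct sums (extensions) and quotients, so it is enough to show $\Bbb{X}_{U'}\in\mathcal{S}$ for every open subgroup $U'\subset U$. We may even take $U'$ normal in $U$, by shrinking it to its normal core, which is still open of finite index.

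The key step is to use that $U/U'$ is a finite $p$-group and $k$ has characteristic $p$. Transitivity of compact induction gives $\Bbb{X}_{U'}=\text{ind}_U^G(k[U/U'])$. Since $k[U/U']$ is a finite dimensional representation of the finite $p$-group $U/U'$, its only irreducible constituent is the trivial one. Hence it has a finite filtration by $U$-subrepresentations whose graded pieces are all the trivial character $1$ (for instance the radical filtration, of length at most $[U:U']$). The functor $\text{ind}_U^G$ is exact, so $\Bbb{X}_{U'}$ acquires a finite filtration whose subquotients are all isomorphic to $\text{ind}_U^G(1)=\Bbb{X}_U$. An induction on the length of this filtration, using closure of $\mathcal{S}$ under extensions, then shows $\Bbb{X}_{U'}\in\mathcal{S}$.

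The only point that needs real care is this last step: the unipotence of $k[U/U']$, which is the standard fact that a nonzero representation of a finite $p$-group in characteristic $p$ has nonzero fixed vectors, together with exactness of compact induction. Everything else is formal. Note also that the argument shows $\mathcal{S}$ does not depend on the choice of pro-$p$ subgroup $U$, consistent with the remark preceding the lemma.
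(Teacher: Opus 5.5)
Your proof is correct and follows essentially the same route as the paper: reduce to $\Bbb{X}_{U'}$ for open $U'\subset U$ via Frobenius reciprocity, write $\Bbb{X}_{U'}=\text{ind}_U^G(\text{ind}_{U'}^U(1))$, and combine exactness of compact induction with the fact that a finite-dimensional smooth representation of the pro-$p$ group $U$ is an iterated extension of trivial ones. The paper proves that fact for arbitrary finite-dimensional $F$ by induction on $\dim F$ using $F^U\neq 0$, whereas you use a radical filtration of $k[U/U']$ with $U'$ normal; the radical layers are direct sums of copies of the trivial character rather than single copies, but this is harmless since finite direct sums of $\Bbb{X}_U$ lie in the Serre subcategory.
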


\begin{proof}
The inclusion $\subseteq$ is obvious since $\Bbb{X}_U$ is generated by $\text{char}_U$. For the opposite inclusion $\supseteq$ we write a finitely generated $V$ as a quotient 
of a finite coproduct $\bigoplus_{i \in I} \Bbb{X}_{U_i}\twoheadrightarrow V$ with $U_i \subset U$. We are reduced to verifying that 
$\Bbb{X}_{U'}\in \text{Serre}_{\Mod(G)}(\Bbb{X}_U)$ for all open subgroups $U' \subset U$. In fact we will check that indeed $\text{Serre}_{\Mod(G)}(\Bbb{X}_U)$ contains 
$\text{ind}_U^G(F)$ for all finite-dimensional smooth $k[U]$-modules $F$. (Taking $F=\text{ind}_{U'}^U(1)$ then gives the result.) We use induction of $\dim F$. The base case is
trivial, so suppose $\dim F>0$ and assume the claim holds in smaller dimensions. Note that $\dim F^U>0$ since $U$ is pro-$p$. In the short exact sequence
$$
0 \longrightarrow \text{ind}_U^G(F^U) \longrightarrow \text{ind}_U^G(F) \longrightarrow \text{ind}_U^G(F/F^U) \longrightarrow 0
$$
the last term therefore lies in $\text{Serre}_{\Mod(G)}(\Bbb{X}_U)$ by the induction hypothesis, and the first term is a finite coproduct $\Bbb{X}_U^{\oplus \dim F^U}$. We conclude that
the extension $\text{ind}_U^G(F)$ also belongs to $\text{Serre}_{\Mod(G)}(\Bbb{X}_U)$.
\end{proof}

We observe that $\text{Serre}_{\Mod(G)}(\Bbb{X}_U)$ hence contains the family of generators $\{\Bbb{X}_{U'}\}_{U'\subset U}$, and by Lemma \ref{groth} we infer that the restriction map 
$$
\rho: Z(\Mod(G)) \longrightarrow Z(\text{Serre}_{\Mod(G)}(\Bbb{X}_U))
$$
is surjective. In \cite[Lem.~3.7]{AS23} one finds a $k$-algebra homomorphism $\Phi: \widehat{k[Z(G)]}\rightarrow Z(\Mod(G))$. The main result \cite[Thm.~1.1]{AS23} asserts that
$\Phi$ is an isomorphism if $G$ has no proper open centralizers. (Here the hat denotes the completion of the group algebra $k[Z(G)]$ in the sense of \cite[Def.~3.3]{AS23}.)

\begin{lem}\label{inject}
Let $U \subset G$ be a pro-$p$ open subgroup. Then $\rho \circ \Phi$ factors as in the diagram
$$
\begin{tikzcd}
Z(\Mod(G)) \arrow[r, twoheadrightarrow, "\rho"] 
& Z(\text{Serre}_{\Mod(G)}(\Bbb{X}_U))_\text{lred}  \\
k[Z(G)] \arrow[u, "\Phi"] \arrow[r, twoheadrightarrow, "\text{pr}"]
& k[Z(G)/Z(G) \cap U]. \arrow[u, dashrightarrow, "\Psi"] 
\end{tikzcd}
$$
Moreover, the resulting map 
$$
\Psi: k[Z(G)/Z(G) \cap U]_\text{red} \rightarrow  Z(\text{Serre}_{\Mod(G)}(\Bbb{X}_U))_\text{lred}
$$
is injective. 
\end{lem}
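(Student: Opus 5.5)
The proof has two parts: the factorization through $\text{pr}$, and the injectivity of $\Psi$ on the nilreduction.

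\emph{Factorization.} It suffices to check that for $c \in Z(G)\cap U$ the element $\Phi(c)-1$ acts locally nilpotently on every object of $\text{Serre}_{\Mod(G)}(\Bbb{X}_U)$. Since $\Phi(c)$ is the central action of $c$, it is enough to show that $c-1$ is nilpotent on $\Bbb{X}_U$ itself. Then, by the argument in the proof of Lemma \ref{serre}, the objects on which $\rho(\Phi(c)-1)$ is nilpotent form a Serre subcategory containing $\Bbb{X}_U$.

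On $\Bbb{X}_U=k[G/U]$ we have $c\cdot gU = gcU = gU$, because $c$ is central and lies in $U$. So $c$ acts trivially and $c-1=0$ on $\Bbb{X}_U$. Hence $\Phi(c)-1$ maps to a locally nilpotent element, and $\rho\circ\Phi$ restricted to $k[Z(G)]$ factors through $k[Z(G)/Z(G)\cap U]$ modulo lred. (Here I use that $k[Z(G)]\to\widehat{k[Z(G)]}$ followed by $\Phi$ is the obvious action map.) Nilpotent elements of $k[Z(G)/Z(G)\cap U]$ then map to nilpotent elements, so $\Psi$ descends to the reduction.

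\emph{Injectivity.} I evaluate at the single object $\Bbb{X}_U$. Take $x=\sum a_{\bar c}\bar c\in k[Z(G)/Z(G)\cap U]$ whose image is locally nilpotent. Then $x$ acts nilpotently on $\Bbb{X}_U$, say $x^n$ acts by zero. Applying $x^n$ to $\text{char}_U=1\cdot U$ gives $\sum b_{\bar c}\, cU$, where $x^n=\sum b_{\bar c}\bar c$. Distinct classes $\bar c$ in $Z(G)/Z(G)\cap U$ give distinct cosets $cU$ in $G/U$: indeed $cU=c'U$ forces $c^{-1}c'\in U\cap Z(G)$. These cosets are linearly independent in $k[G/U]$, so $x^n=0$ in the group algebra, and $x$ is nilpotent. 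This proves that $\Psi$ is injective on $k[Z(G)/Z(G)\cap U]_\text{red}$.

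The main obstacle is bookkeeping rather than mathematics. One has to make sure that the lred quotient on the target is compatible with precomposing with $\Phi$ on $k[Z(G)]$, and that ``nilpotent in the center'' is detected objectwise. Both follow directly from Lemma \ref{serre} and the nilpotence-extension argument in its proof.
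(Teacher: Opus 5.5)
Your proof is correct and takes essentially the same route as the paper. For the factorization, you show that $\ker(\text{pr})$ kills $\Bbb{X}_U$ and then propagate local nilpotence through the Serre closure. The paper does this with the auxiliary subcategory $\mathcal{D}$ for all $x\in\ker(\text{pr})$ at once, whereas you work on the generators $c-1$ and use that locally nilpotent elements form an ideal.

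For injectivity you evaluate on $\text{char}_U$ exactly as the paper does. The only difference is that you work directly in $k[Z(G)/Z(G)\cap U]$ instead of with $\sqrt{\ker(\text{pr})}\subset k[Z(G)]$.

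One point you use implicitly should be stated: $Z(\text{Serre}_{\Mod(G)}(\Bbb{X}_U))_\text{lred}$ is reduced. This holds because $t^n$ locally nilpotent forces $t$ locally nilpotent. It is what guarantees that nilpotent elements map to zero when you descend to the nilreduction, rather than merely to nilpotents.
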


\begin{proof}
The ideal $\ker(\text{pr})$ is generated by elements of the form $u-1$ with $u \in Z(G)\cap U$. Clearly any such $u$ acts as the identity on $\Bbb{X}_U$, so $\ker(\text{pr})$ annihilates 
$\Bbb{X}_U$. Now consider the auxiliary full subcategory
$$
\mathcal{D}:=\{V \in \Mod(G): \text{$\Phi_V(x)$ is nilpotent for all $x \in \ker(\text{pr})$}\}.
$$
Obviously $\mathcal{D}$ is closed under subs, quotients, and it contains $\Bbb{X}_U$. It is also closed under extensions, for suppose $0\rightarrow V' \rightarrow V \rightarrow V'' \rightarrow 0$ is a short exact sequence in $\Mod(G)$ with $V',V'' \in \mathcal{D}$ and $x \in \ker(\text{pr})$. Choose a large enough $n$ such that both 
$\Phi_{V'}(x)^n=0$ and $\Phi_{V''}(x)^n=0$. Then $\Phi_{V}(x)^{2n}=0$, as in the proof of Lemma \ref{serre}. We conclude that $\text{Serre}_{\Mod(G)}(\Bbb{X}_U) \subseteq \mathcal{D}$, which is to say that $(\rho \circ \Phi)(x)$ is locally nilpotent for all $x \in \ker(\text{pr})$.

The same argument shows that the morphism labeled $\Psi$ in the diagram actually factors through the nilreduction
$$
k[Z(G)]/\sqrt{\ker(\text{pr})} \overset{\sim}{\longrightarrow} k[Z(G)/Z(G) \cap U]_\text{red}.
$$
To see the resulting map is injective, we argue as follows. In the beginning, suppose $x=\sum_{c\in Z(G)}a_cc \in k[Z(G)]$ and $\Phi_{\Bbb{X}_U}(x)=0$, which means $xf=0$ for all $f \in \Bbb{X}_U$. Specializing to $f=\text{char}_U$, this amounts to 
$\sum_{c\in Z(G)}a_c \text{char}_{cU}=0$, which we break up as
$$
{\sum}_{r\in Z(G)/Z(G) \cap U} \bigg( {\sum}_{u\in Z(G) \cap U} a_{ru} \bigg) \text{char}_{rU}=0.
$$
We conclude that ${\sum}_{u\in Z(G) \cap U} a_{ru}=0$ for all $r$. By the same reasoning $\text{pr}(x)=\sum_r \bigg(\sum_u a_{ru}\bigg) r$, which is therefore zero. 
To summarize, this shows that if $\Phi(x)$ vanishes on $\Bbb{X}_U$ then $x \in \ker(\text{pr})$. So, if $\Phi(x)$ is nilpotent on $\Bbb{X}_U$ then $x \in \sqrt{\ker(\text{pr})}$.
In particular, if $(\rho \circ \Phi)(x)$ is locally nilpotent then $x \in \sqrt{\ker(\text{pr})}$.
\end{proof}

We wish to emphasize that the algebra $k[Z(G)/Z(G) \cap U]_\text{red}$ in Lemma \ref{inject} is independent of $U$. To be more precise, it is isomorphic to the topological nilreduction of $\widehat{k[Z(G)]}$, by which we mean the quotient $\widehat{k[Z(G)]}_\text{red}$ by the ideal of topologically nilpotent elements (i.e. those $x$ for which $x^i\rightarrow 0$ as $i \rightarrow \infty$). 

\begin{lem}\label{indepz}
Let $Z$ be an an arbitrary abelian locally pro-$p$ group and define $\widehat{k[Z]}$ by analogy with (\ref{compl}).
Then, for any pro-$p$ open subgroup $Z'\subset Z$, the natural projection  $\widehat{k[Z]}\twoheadrightarrow k[Z/Z']$ induces an isomorphism 
$$
\widehat{k[Z]}_\text{red}\overset{\sim}{\longrightarrow} k[Z/Z']_\text{red}.
$$
\end{lem}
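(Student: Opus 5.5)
The plan is to show that the kernel of the projection $\widehat{k[Z]}\twoheadrightarrow k[Z/Z']$, which we call $I_{Z'}$, consists of topologically nilpotent elements. We then show conversely that an element whose image in $k[Z/Z']$ is nilpotent is itself topologically nilpotent. Together these say that the preimage of the nilradical of $k[Z/Z']$ is exactly the ideal of topologically nilpotent elements, which gives the isomorphism. Surjectivity is clear, since the projection is already surjective.

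First I would recall the structure: $\widehat{k[Z]}=\varprojlim_{Z''}k[Z/Z'']$, with $Z''$ running over compact open subgroups. Cofinally we may take $Z''\subset Z'$ open, so $Z'/Z''$ is a finite $p$-group. Fix such a $Z''$. The kernel of $k[Z/Z'']\to k[Z/Z']$ is the ideal generated by the elements $u-1$ with $u\in Z'/Z''$. Since $Z'/Z''$ is a finite abelian $p$-group and $\mathrm{char}\,k=p$, we have $(u-1)^{p^m}=u^{p^m}-1=0$ for $p^m=|Z'/Z''|$. The augmentation ideal of $k[Z'/Z'']$ is therefore nilpotent, say with $J^N=0$. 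Because $k[Z/Z'']$ is free over $k[Z'/Z'']$, the kernel $J\cdot k[Z/Z'']$ has $N$-th power zero as well (commutativity helps here).

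Now take $x\in I_{Z'}$. Its image in each $k[Z/Z'']$ lies in this nilpotent kernel, so $x^i\mapsto 0$ in $k[Z/Z'']$ for all $i\ge N(Z'')$. The topology on $\widehat{k[Z]}$ is the inverse limit topology, whose neighbourhoods of $0$ are the kernels of the projections to $k[Z/Z'']$; I would check that this matches \cite[Def.~3.3]{AS23}. Hence $x^i\to 0$, and $x$ is topologically nilpotent.

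For the converse, suppose $\bar x\in k[Z/Z']$ is nilpotent, say $\bar x^n=0$. Then $x^n\in I_{Z'}$, so by the previous step $x^{ni}\to0$. Writing $x^j=x^{n\lfloor j/n\rfloor}x^r$ with $0\le r<n$, and using that the open kernels are ideals, we get $x^j\to0$. Conversely, if $x$ is topologically nilpotent, then its image in the discrete quotient $k[Z/Z']$ has some vanishing power. So the topologically nilpotent elements are exactly the preimage of the nilradical of $k[Z/Z']$, and in particular they form an ideal. The induced map $\widehat{k[Z]}_\text{red}\to k[Z/Z']_\text{red}$ is therefore injective, and surjective by construction.

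The main obstacle is the topological bookkeeping: confirming that the topology in \cite{AS23} is the inverse limit topology with ideal neighbourhoods, and that the projection is surjective. Surjectivity holds because $k[Z]\to k[Z/Z']$ is already surjective. The remaining ingredient is the nilpotence of the relative augmentation ideal, which is the routine characteristic $p$ computation above.
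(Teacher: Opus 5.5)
Your proposal is correct and follows essentially the same route as the paper. In both, the kernel of each transition map $k[Z/Z'']\to k[Z/Z']$ for $Z''\subset Z'$ is generated by the nilpotent elements $u-1$, so an element with nilpotent image in $k[Z/Z']$ has nilpotent image in every $k[Z/Z'']$ and is therefore topologically nilpotent. You additionally spell out two points the paper leaves implicit: the converse direction (topologically nilpotent elements have nilpotent image in the discrete quotient), and that these elements form an ideal.
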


\begin{proof}
If $Z''\subset Z'$ it will suffice to show that the kernel of the transition map $k[Z/Z''] \twoheadrightarrow k[Z/Z']$ consists of nilpotent elements, and it therefore induces an isomorphism when passing to nilreductions. To see this, note that this kernel is generated by elements $z-1$ with $z\in Z'/Z''$. If $|Z'/Z''|=p^N$ we find that indeed $(z-1)^{p^N}=z^{p^N}-1=0$.

Now, if $x \in \widehat{k[Z]}$ has nilpotent projection $x_{Z'}\in k[Z/Z']$, then $x$ must be topologically nilpotent. (Given an arbitrarily small $Z''\subset Z'$, some power of $x$ lies in the kernel of $\widehat{k[Z]} \twoheadrightarrow k[Z/Z'']$ as $x_{Z''}$ is nilpotent.)
\end{proof}

In our applications $Z=Z(G)$ and $Z'=Z(G)\cap U$ for a choice of pro-$p$ open subgroup $U \subset G$. In this setup
the natural projection therefore induces an isomorphism
$$
\widehat{k[Z(G)]}_\text{red} \overset{\sim}{\longrightarrow} k[Z(G)/Z(G)\cap U]_\text{red}.
$$

Let us now return to Lemma \ref{inject} and address the surjectivity of $\Psi$. We arrive at the following 'addendum' to \cite[Thm.~1.1]{AS23}.

\begin{prop}\label{addendum}
Let $G$ be a locally profinite group that contains a pro-$p$ open subgroup, and let $k$ be a field of characteristic $p$. Suppose $G$ contains no proper open centralizers. Then the natural map 
$$
\Psi: k[Z(G)/Z(G)\cap U]_\text{red} \overset{\sim}{\longrightarrow} Z(\text{Serre}_{\Mod(G)}(\Bbb{X}_U))_\text{lred}
$$
is an isomorphism for every pro-$p$ open subgroup $U \subset G$.
\end{prop}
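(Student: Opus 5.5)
Injectivity of $\Psi$ on the nilreduction is already Lemma \ref{inject}, so what remains is surjectivity. The plan is to lift an arbitrary element of the target to $Z(\Mod(G))$ and then use \cite[Thm.~1.1]{AS23} to see that it comes from the group algebra.

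Start with a class in $Z(\text{Serre}_{\Mod(G)}(\Bbb{X}_U))_\text{lred}$ and choose a representative $t$ in $Z(\text{Serre}_{\Mod(G)}(\Bbb{X}_U))$. By Lemma \ref{fg}, the subcategory $\text{Serre}_{\Mod(G)}(\Bbb{X}_U)$ contains the family of generators $\{\Bbb{X}_{U'}\}_{U'\subset U}$. Lemma \ref{groth} then shows that the restriction map $\rho$ is surjective, so $t=\rho(T)$ for some $T\in Z(\Mod(G))$.

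Since $G$ has no proper open centralizers, \cite[Thm.~1.1]{AS23} says $\Phi:\widehat{k[Z(G)]}\to Z(\Mod(G))$ is an isomorphism. Hence $T=\Phi(x)$ for some $x\in\widehat{k[Z(G)]}$.

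It remains to compare $\rho(\Phi(x))$ with $\Psi$ applied to the image of $x$ in $k[Z(G)/Z(G)\cap U]$. The diagram in Lemma \ref{inject} is only drawn for the uncompleted algebra $k[Z(G)]$, so the point is to extend it to the completion. The tool is the factorization of $\Phi$ through the finite quotients $k[Z(G)/Z']$ on representations whose vectors are fixed by $Z'$.

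First, for $Z'=Z(G)\cap U$, the element $\Phi(x)$ acts on $\Bbb{X}_U$ through the image $\bar x\in k[Z(G)/Z(G)\cap U]$. This holds because every vector of $\Bbb{X}_U$ is fixed by $Z(G)\cap U$, as $Z(G)$ is central.

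Second, choose $y\in k[Z(G)]$ with $\text{pr}(y)=\bar x$. The difference $\Phi(x)-\Phi(y)$ lies in $\Phi$ of the kernel of $\widehat{k[Z(G)]}\to k[Z(G)/Z(G)\cap U]$. That kernel is the closed ideal generated by the elements $u-1$ with $u\in Z(G)\cap U$. The argument in Lemma \ref{inject} uses the subcategory $\mathcal{D}$, which is closed under subs, quotients and extensions (by the $2n$ trick) and contains $\Bbb{X}_U$. Running the same argument for this kernel shows that $\rho\Phi$ of any kernel element is locally nilpotent on $\text{Serre}_{\Mod(G)}(\Bbb{X}_U)$, because each such element acts by zero on $\Bbb{X}_U$.

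Consequently $t\equiv\rho\Phi(y)=\Psi(\bar x)$ modulo locally nilpotent elements, which proves surjectivity. Combined with the injectivity from Lemma \ref{inject}, $\Psi$ is an isomorphism. Lemma \ref{indepz} then identifies the source with $\widehat{k[Z(G)]}_\text{red}$, independently of $U$.

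I expect the main obstacle to be this compatibility between the completed algebra and the quotient $k[Z(G)/Z(G)\cap U]$. Concretely, one has to check from the construction in \cite[Lem.~3.7]{AS23} that $\Phi(x)$ acts on a representation $V$ through its image in $k[Z(G)/Z']$ on the subspace $V^{Z'}$. One also has to check that the closed ideal $\ker(\widehat{k[Z(G)]}\to k[Z(G)/Z(G)\cap U])$ annihilates $\Bbb{X}_U$; this should follow because that kernel acts trivially on $(Z(G)\cap U)$-invariant vectors. If these verifications hold, the rest is formal.
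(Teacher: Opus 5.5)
Your proof is correct and follows the paper's argument. You lift $t$ through the surjections $\rho$ (Lemma \ref{groth}) and $\Phi$ (\cite[Thm.~1.1]{AS23}), then move $x$ into $k[Z(G)]$ modulo $\ker(\widehat{\text{pr}})$ by choosing a preimage under the surjective map $\text{pr}$, where the paper uses density of $k[Z(G)]$ and openness of the kernel, which amounts to the same thing; your explicit check that $\rho\Phi$ of any element of $\ker(\widehat{\text{pr}})$ is locally nilpotent (it kills $\Bbb{X}_U$, then the Serre-closure argument of Lemma \ref{inject}) spells out a step the paper only asserts with ``we are free to replace $x$ by $x+y$''.
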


\begin{proof}
Given a $t \in Z(\text{Serre}_{\Mod(G)}(\Bbb{X}_U))_\text{lred}$, there is at least an $x \in \widehat{k[Z(G)]}$ with $(\rho \circ \Phi)(x)=t$ since both maps $\rho$ and $\Phi$ are 
surjective by Lemma \ref{groth} and \cite[Thm.~1.1]{AS23}. We are free to replace $x$ by 
$x+y$ for any $y \in \ker(\widehat{\text{pr}})$, where $\widehat{\text{pr}}: \widehat{k[Z(G)]}\rightarrow k[Z(G)/Z(G) \cap U]$ temporarily denotes the natural projection defined on the completed group algebra. Since $k[Z(G)]$ is a dense subset of $\widehat{k[Z(G)]}$ and $\ker(\widehat{\text{pr}})$ is an open ideal hereof, we can arrange for $x$ to lie in 
$k[Z(G)]$ rather than just in its completion. Upon doing that we have the formula $(\Psi \circ \text{pr})(x)=t$, so that $\Psi$ maps $k[Z(G)/Z(G)\cap U]$ onto $Z(\text{Serre}_{\Mod(G)}(\Bbb{X}_U))_\text{lred}$.
\end{proof}

In light of Lemma \ref{serre} and Lemma \ref{fg}, Proposition \ref{addendum} identifies $Z(\Mod(G)^\text{fg})_\text{lred}$ with the algebra 
$\widehat{k[Z(G)]}_\text{red}$ we introduced above (under the 'no proper open centralizers' hypothesis of course). 

\subsection{Graded centers}

If $\mathcal{T}$ is a triangulated category, the graded center $Z^*(\mathcal{T})=\bigoplus_{r\in \Z} Z^r(\mathcal{T})$ is defined as follows. The elements of 
$Z^r(\mathcal{T})$ are natural transformations $t: \text{Id}\rightarrow \Sigma^r$ such that $t\Sigma=(-1)^r\Sigma t$. More concretely, $t$ is a functorial tuple 
$(t_V)_{V \in \mathcal{T}}$ of morphisms $t_V: V \rightarrow \Sigma^rV$ for which $t_{\Sigma V}=(-1)^r\Sigma(t_V)$. For example, in degree zero $Z^0(\mathcal{T})$ is the subalgebra of the full center of $\mathcal{T}$ consisting of all $t: \text{Id}\rightarrow \text{Id}$ commuting with $\Sigma$. There is an obvious composition product on $Z^*(\mathcal{T})$ which makes it a graded-commutative algebra, and the evaluation map $t \mapsto t_V$ gives a homomorphism $Z^*(\mathcal{T})\rightarrow \bigoplus_{r\in \Z} \Hom_{\mathcal{T}}(V,\Sigma^rV)$ into the graded center of the Yoneda 'Ext-algebra' of $V$.

If $\mathcal{X}$ is a set of objects in $\mathcal{T}$ we let $\text{Thick}_\mathcal{T}(\mathcal{X})$ denote the smallest thick subcategory of $\mathcal{T}$ containing $\mathcal{X}$. 
(By a 'thick' subcategory of $\mathcal{T}$ we mean a strictly full triangulated subcategory closed under direct summands.) 
We build it successively as in \cite[Sect.~2.2]{BvdB} and we briefly review the construction.

The starting point is the full subcategory $\langle \mathcal{X} \rangle_1=\text{smd}(\text{add}(\mathcal{X}))$. Here $\text{add}(\mathcal{X})$ denotes the smallest strictly full subcategory of $\mathcal{T}$, containing $\mathcal{X}$, which is closed under $\Sigma$ and finite coproducts. The notation $\text{smd}(-)$ means we are adjoining all direct summands. Then for $i\geq 2$ the increasing family of subcategories $\langle \mathcal{X}\rangle_i$ are defined inductively as
$$
\langle \mathcal{X}\rangle_i=\text{smd}(\langle \mathcal{X}\rangle_{i-1}\star \langle \mathcal{X}\rangle_1)=
\text{smd}(\underbrace{\langle \mathcal{X}\rangle_1 \star \cdots \star \langle \mathcal{X}\rangle_1}_\text{$i$ factors}).
$$
Here $\star$ is the operation introduced in \cite[Sect.~1.3.9, p.~33]{BBD}. For two sets of objects $\mathcal{A}$ and $\mathcal{B}$, we let 
$\mathcal{A} \star \mathcal{B}$ denote the set of objects $X$ sitting in a triangle $A\rightarrow X \rightarrow B \rightarrow$ with $A \in \mathcal{A}$ and $B \in \mathcal{B}$. 
This operation is associative by the octahedral axiom, see \cite[Lem.~1.3.10, p.~33]{BBD}. In this notation we have
$$
\text{Thick}_\mathcal{T}(\mathcal{X})=\langle \mathcal{X}\rangle_\infty:=\bigcup_{i\geq 1} \langle \mathcal{X}\rangle_i.
$$

\begin{rem}
One has to be careful that $Z^*(\mathcal{T})$ may not be a set unless $\mathcal{T}$ is essentially small (or 'skeletally small'), i.e. when $\mathcal{T}$ is locally small and the isomorphism classes of objects form a set. In what follows this will not be an issue as we will primarily be interested in the graded center of 
$\text{Thick}_\mathcal{T}(\mathcal{X})$ which {\emph{is}} essentially small by the above construction. For example, if $\mathcal{T}$ is compactly generated then $\mathcal{T}^c$ is skeletally small and hence $Z^*(\mathcal{T}^c)$ is a set.
\end{rem}

The next result is well-known but we include the proof for convenience. 

\begin{lem}\label{nilp}
Suppose $\sigma, \tau \in Z^*(\mathcal{T})$ are homogeneous and they both vanish on $\langle \mathcal{X}\rangle_i$ for some $i \geq 1$. Then their product
$\sigma \tau$ vanishes on $\langle \mathcal{X}\rangle_{i+1}$. In particular, if $t \in Z^*(\mathcal{T})$ is homogeneous and vanishes on $\mathcal{X}$ then 
$t^{2^n}$ vanishes on $\langle \mathcal{X}\rangle_{n+1}$ for all $n\geq 0$.
\end{lem}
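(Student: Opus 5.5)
The plan is to prove the first assertion by working with a single triangle, and then obtain the second by induction on $n$.

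\emph{Step 1: vanishing is inherited along the construction.} A homogeneous $t$ of degree $r$ that vanishes on $\mathcal{X}$ also vanishes on $\langle \mathcal{X}\rangle_1$. This uses three facts. First, $t_{\Sigma V}=\pm\Sigma(t_V)$. Second, for $V=V_1\oplus V_2$ naturality with respect to the coprojections and projections gives $t_V=t_{V_1}\oplus t_{V_2}$. Third, if $W$ is a direct summand of $V$ with inclusion $\iota$ and retraction $\pi$, then $t_W=\Sigma^r(\pi)\circ t_V\circ\iota$. So if $t$ vanishes on $\langle\mathcal{X}\rangle_i$, it vanishes on everything obtained from it by $\Sigma$, finite sums and summands. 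The same retraction argument shows that it suffices to prove $\sigma\tau$ vanishes on $\langle\mathcal{X}\rangle_i\star\langle\mathcal{X}\rangle_1$; vanishing on $\langle\mathcal{X}\rangle_{i+1}$ then follows by taking summands. I am also implicitly using that $\langle\mathcal{X}\rangle_1\subseteq\langle\mathcal{X}\rangle_i$, so that both $\sigma$ and $\tau$ vanish on $A$ and on $B$ below.

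\emph{Step 2: the key factorization.} Let $A\xrightarrow{f}X\xrightarrow{g}B\xrightarrow{h}\Sigma A$ be a triangle with $A\in\langle\mathcal{X}\rangle_i$ and $B\in\langle\mathcal{X}\rangle_1$, and let $\sigma$, $\tau$ have degrees $r$ and $s$.
\begin{itemize}
\item Naturality along $g$ gives $\Sigma^s(g)\circ\tau_X=\tau_B\circ g=0$. Applying the cohomological functor $\Hom(X,-)$ to the triangle shifted by $\Sigma^s$, we get that $\tau_X$ factors as $\tau_X=\Sigma^s(f)\circ\alpha$ for some $\alpha:X\to\Sigma^sA$.
\item Naturality along $f$ gives $\sigma_X\circ f=\Sigma^r(f)\circ\sigma_A=0$. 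Hence $\sigma$ on $\Sigma^sX$, which equals $\pm\Sigma^s(\sigma_X)$, kills $\Sigma^s(f)$.
\end{itemize}
Combining these,
$$(\sigma\tau)_X=\sigma_{\Sigma^sX}\circ\tau_X=\pm\Sigma^s(\sigma_X)\circ\Sigma^s(f)\circ\alpha=\pm\Sigma^s(\sigma_X\circ f)\circ\alpha=0.$$
The order of the factors does not matter, since graded commutativity only introduces a sign; alternatively one can run the symmetric argument.

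\emph{Step 3: the second statement.} Apply the first part repeatedly with $\sigma=\tau=t^{2^{n-1}}$. By induction, $t^{2^{n-1}}$ vanishes on $\langle\mathcal{X}\rangle_n$, so $t^{2^n}$ vanishes on $\langle\mathcal{X}\rangle_{n+1}$. The base case $n=0$ is the vanishing of $t$ on $\langle\mathcal{X}\rangle_1$ from Step 1.

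The only point needing care is the factorization in Step 2 and keeping track of the shift signs, which is routine. I do not expect a genuine obstacle.
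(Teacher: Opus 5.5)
Your proposal is correct and is essentially the paper's argument: one triangle, exactness of $\Hom$ to factor $\tau$ through a map coming from an end of the triangle, naturality of $\sigma$ to kill that map, and then the squaring induction. The differences are cosmetic. The paper uses triangles with the $\langle\mathcal{X}\rangle_1$-object in the first position (so $\tau_{V''}$ factors through $\Sigma^s V\to\Sigma^s V''$) and draws both possible factorizations, whereas you work directly with $\langle\mathcal{X}\rangle_i\star\langle\mathcal{X}\rangle_1$ as in the definition and use only one; also, $\Hom(X,-)$ is a homological rather than a cohomological functor.
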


\begin{proof}
Let $\sigma \in Z^r(\mathcal{T})$ and $\tau \in Z^s(\mathcal{T})$. Then consider a triangle $V' \rightarrow V \rightarrow V'' \rightarrow$ with $V' \in \langle \mathcal{X}\rangle_i$ 
and $V\in \langle \mathcal{X}\rangle_1$, so that $V'' \in \langle \mathcal{X}\rangle_{1}\star \langle \mathcal{X}\rangle_i$. Now look at the diagram
$$
\begin{tikzcd}
V' \arrow[r] \arrow[d, "\tau_{V'}=0"'] & V \arrow[r] \arrow[d, "\tau_V=0"'] & V'' \arrow[r] \arrow[d, "\tau_{V''}"] \arrow[dr, "0"] \arrow[dl, bend right=10, dashrightarrow]& \Sigma V' \arrow[d, "\tau_{\Sigma V'}=(-1)^s\Sigma(\tau_{V'})=0"]\\
\Sigma^s V' \arrow[r] \arrow[d, "\Sigma^s(\sigma_{V'})=0"'] & \Sigma^s V \arrow[r] \arrow[d, "\Sigma^s(\sigma_V)=0"'] \arrow[dr, "0"]& \Sigma^s V'' \arrow[r] \arrow[d, "\Sigma^s(\sigma_{V''})"] & \Sigma^{s+1} V'  \arrow[dl, bend left=10, dashrightarrow] \arrow[d, "\Sigma^s(\sigma_{\Sigma V'})=(-1)^r\Sigma^{s+1}(\sigma_{V'})=0"] \\
\Sigma^{r+s} V' \arrow[r] & \Sigma^{r+s} V \arrow[r] & \Sigma^{r+s} V'' \arrow[r] & \Sigma^{r+s+1} V' 
\end{tikzcd}
$$
There exists dashed morphisms as in the diagram since $\Hom_\mathcal{T}(V'',-)$ is a homological functor and $\Hom_\mathcal{T}(-, \Sigma^{r+s}V'')$ is a cohomological functor. Two consecutive morphisms in a triangle compose to zero, so a diagram chase shows that $(\sigma \tau)_{V''}:=\Sigma^s(\sigma_{V''})\circ \tau_{V''}=0$. Thus $\sigma\tau$ vanishes on $\langle \mathcal{X}\rangle_{1}\star \langle \mathcal{X}\rangle_i$ and therefore also on direct summands of objects in here. We conclude that $\sigma \tau$ is zero on 
$\langle \mathcal{X}\rangle_{i+1}$ as claimed.

The last statement now follows by induction on $n$. The base case $n=0$ is easy. If $t$ vanishes on $\mathcal{X}$, it vanishes on all of $\langle \mathcal{X} \rangle_1$ since $t$ commutes with $\Sigma$ up to a sign, and with coproducts. Assume  $t^{2^n}$ vanishes on $\langle \mathcal{X}\rangle_{n+1}$ for some $n\geq 0$. Taking $\sigma=\tau=t^{2^n}$ above then allows us to conclude that $t^{2^{n+1}}=t^{2^n}t^{2^n}$ indeed is zero on all objects from $\langle \mathcal{X}\rangle_{n+2}$.
\end{proof}

In our applications $\mathcal{T}$ is generated by a singleton $\mathcal{X}=\{\Bbb{X}\}$ as a thick subcategory of some ambient triangulated category. In this situation the previous result shows that all homogeneous elements $t \in Z^*(\mathcal{T})$ with $t_{\Bbb{X}}=0$ (or even just nilpotent) are locally nilpotent and therefore zero in $Z^*(\mathcal{T})_\text{lred}$. (The latter now denotes the quotient of $Z^*(\mathcal{T})$ by the homogeneous ideal generated by the locally nilpotent homogeneous elements.) 

\subsection{Comparing graded centers to Bernstein centers}

We consider the unbounded derived category $D(G):=D(\Mod(G))$ and its graded center $Z^*(D(G))$. For the time being we are mostly interested in the degree zero piece
$Z^0(D(G))$. 

Viewing $V \in \Mod(G)$ as a complex concentrated in degree zero gives an equivalence of categories between $\Mod(G)$ and the full subcategory of $D(G)$ consisting of complexes $V^\bb$ with $h^i(V^\bb)=0$ in all degrees $i \neq 0$. This point of view gives rise to a surjective restriction homomorphism of algebras
$$
\pi: Z^0(D(G)) \longrightarrow Z(\Mod(G))
$$
that admits a canonical splitting $s$ constructed in the following proof.

\begin{lem}\label{split}
The above homomorphism $\pi$ is a split surjection.
\end{lem}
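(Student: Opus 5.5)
We construct $s: Z(\Mod(G))\to Z^0(D(G))$ directly and then check $\pi\circ s=\text{Id}$. Given $z=(z_V)_{V\in\Mod(G)}\in Z(\Mod(G))$, we first extend it to complexes degreewise. For a complex $V^\bb$ of smooth representations, put $s(z)_{V^\bb}:=(z_{V^i})_{i\in\Z}$. Naturality of $z$ with respect to the differentials $d^i:V^i\to V^{i+1}$ gives $d^i\circ z_{V^i}=z_{V^{i+1}}\circ d^i$. Hence $s(z)_{V^\bb}$ is a chain endomorphism of $V^\bb$. It is natural with respect to all chain maps, by the same naturality applied in each degree.

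Next we check that this descends to the derived category. First, $s(z)$ is compatible with chain homotopies: if $f-g=dh+hd$, naturality of $z$ with respect to the components of $h$ shows that $s(z)$ commutes with $f$ and $g$ equally. So $s(z)$ is a natural endomorphism of the identity functor on the homotopy category $K(G)$. Second, it is compatible with the shift, since $(\Sigma V^\bb)^i=V^{i+1}$ and the differential is only changed by a sign. Thus $s(z)_{\Sigma V^\bb}=\Sigma(s(z)_{V^\bb})$, which is the required compatibility for degree $r=0$.

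To pass from $K(G)$ to $D(G)=K(G)[\text{qis}^{-1}]$, we represent each morphism in $D(G)$ as a roof $V^\bb\xleftarrow{q}W^\bb\xrightarrow{f}V'^\bb$ with $q$ a quasi-isomorphism. Naturality of $s(z)$ with respect to both $q$ and $f$ in $K(G)$ gives
$$
s(z)_{V'^\bb}\circ f q^{-1}=f\circ s(z)_{W^\bb}\circ q^{-1}=f q^{-1}\circ s(z)_{V^\bb}.
$$
So $s(z)$ commutes with every morphism of $D(G)$. Hence $s(z)\in Z^0(D(G))$. The map $s$ is clearly additive, multiplicative and unital, so it is a homomorphism of $k$-algebras.

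Finally, for $V\in\Mod(G)$ regarded as a complex concentrated in degree zero, we have $s(z)_V=z_V$ by construction. This gives $\pi\circ s=\text{Id}$, and in particular $\pi$ is surjective. The only point requiring care is the descent through the localization. This is handled by the observation that a natural transformation of the identity on $K(G)$ automatically commutes with inverses of quasi-isomorphisms: from $s(z)_W$ and $q$ commuting we get $q^{-1}s(z)_V=s(z)_Wq^{-1}$ in $D(G)$.
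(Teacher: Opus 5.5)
Your proposal is correct and follows the paper's proof. You extend $z$ degreewise to a chain endomorphism, check compatibility with $\Sigma$ to get an element of $Z^0(K(G))$, and then descend to $D(G)$; the only difference is that the paper cites \cite[Prop.~2.3(ii)]{KY11} for the passage $Z^*(K(G))\to Z^*(D(G))$, whereas you verify it directly with roofs $fq^{-1}$, which is a fine substitute (your separate chain-homotopy check is harmless but unnecessary, since the naturality squares already commute on the nose for every chain map representative).
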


\begin{proof}
Every tuple $(t_V)_{V \in \Mod(G)}$ in the center of $\Mod(G)$ yields a morphism of complexes $t_{V^\bb}$, for every complex $V^\bb$, by taking $t_{V^i}$ in degree $i$ (note how the compatibility with the differentials follows from the naturality of the tuple). It is easy to check that $t_{\Sigma V^\bb}=\Sigma(t_{V^\bb})$ so this at least produces an element of $Z^0(K(G))$. The localization functor $K(G)\rightarrow D(G)$ induces a homomorphism $Z^*(K(G))\rightarrow Z^*(D(G))$, see part (ii) of \cite[Prop.~2.3]{KY11} for example, which altogether yields an element of  
$Z^0(D(G))$. This construction gives a homomorphism of algebras $s: Z(\Mod(G))\rightarrow Z^0(D(G))$ with the property that $\pi \circ s=\text{Id}$. 
\end{proof}

Our goal is to understand the graded center of $\text{Thick}_{D(G)}(\Bbb{X}_U)$ for an arbitrary pro-$p$ open subgroup $U \subset G$. We do this by comparing it to the Bernstein center of the 'abelian' analogue $\text{Thick}_{\Mod(G)}(\Bbb{X}_U)$, i.e. the smallest thick subcategory of $\Mod(G)$ containing $\Bbb{X}_U$. We warn the reader that by a thick subcategory of $\Mod(G)$ we do \emph{not} mean a Serre subcategory, instead we adopt the terminology used in \cite{KS13} for instance:

\begin{defn}\label{thck}
$\mathcal{A}\subset \Mod(G)$ is a {\bf{thick}} subcategory if it is closed under taking direct summands and satisfies the 2-out-of-3 property on short exact sequences; that is, if 
$0\rightarrow V' \rightarrow V \rightarrow V'' \rightarrow 0$ is a short exact sequence in $\Mod(G)$ such that two of 
the terms $V',V,V''$ belong to $\mathcal{A}$ then so does the third. In other words, $\mathcal{A}$ is closed under kernels of epimorphisms, cokernels of monomorphisms, and extensions. 
\end{defn}

We summarize the basic properties of $\text{Thick}_{\Mod(G)}(\Bbb{X}_U)$ needed in the sequel. 

\begin{lem}\label{thickprops}
Let $U \subset G$ be a pro-$p$ open subgroup. Then the following holds.
\begin{itemize}
\item[(a)] $\text{Thick}_{\Mod(G)}(\Bbb{X}_U)$ contains the family of generators $\{\Bbb{X}_{U'}\}_{U'\subset U}$ for $\Mod(G)$.
\item[(b)] $\text{Thick}_{\Mod(G)}(\Bbb{X}_U)\subset \text{Serre}_{\Mod(G)}(\Bbb{X}_U)$.
\item[(c)] Restriction induces an isomorphism of $k$-algebras
$$
Z(\text{Serre}_{\Mod(G)}(\Bbb{X}_U))_\text{lred} \overset{\sim}{\longrightarrow}Z(\text{Thick}_{\Mod(G)}(\Bbb{X}_U))_\text{lred}.
$$
\item[(d)] $\text{Thick}_{\Mod(G)}(\Bbb{X}_U) \subset \Mod(G) \cap \text{Thick}_{D(G)}(\Bbb{X}_U)$.
\end{itemize}
\end{lem}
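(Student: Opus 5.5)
I would prove the four parts in order, using the earlier lemmas throughout.

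For (a) I would reuse the induction from the proof of Lemma \ref{fg}, now inside the thick subcategory. The claim is that $\text{ind}_U^G(F)\in\text{Thick}_{\Mod(G)}(\Bbb{X}_U)$ for every finite-dimensional smooth $k[U]$-module $F$, proved by induction on $\dim F$. Lemma \ref{fg} used the exact sequence
$$
0\rightarrow \text{ind}_U^G(F^U)\rightarrow \text{ind}_U^G(F)\rightarrow \text{ind}_U^G(F/F^U)\rightarrow 0.
$$
That argument only needs closure under extensions. Thick subcategories are closed under extensions, and also under finite direct sums, since a split sequence is an extension. The first term is $\Bbb{X}_U^{\oplus\dim F^U}$, and the third term is covered by the induction hypothesis, so the middle term lies in $\text{Thick}_{\Mod(G)}(\Bbb{X}_U)$. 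Taking $F=\text{ind}_{U'}^U(1)$ then gives $\Bbb{X}_{U'}$.

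For (b), every Serre subcategory is thick: it is closed under subobjects, quotients and extensions, hence under kernels of epis, cokernels of monos, and direct summands. Minimality of $\text{Thick}_{\Mod(G)}(\Bbb{X}_U)$ then gives the inclusion.

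For (c), set $\mathcal{A}=\text{Thick}_{\Mod(G)}(\Bbb{X}_U)$, which is a full additive subcategory. By (a) it contains a family of generators of $\Mod(G)$, so Lemma \ref{serre} applies and gives $Z(\text{Serre}_{\Mod(G)}(\mathcal{A}))_\text{lred}\overset{\sim}{\to}Z(\mathcal{A})_\text{lred}$. It remains to see that $\text{Serre}_{\Mod(G)}(\mathcal{A})=\text{Serre}_{\Mod(G)}(\Bbb{X}_U)$. The inclusion $\supseteq$ holds because $\Bbb{X}_U\in\mathcal{A}$, and $\subseteq$ follows from (b). The restriction map in (c) is the composite of restrictions, so it is the map of Lemma \ref{serre}.

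For (d), define $\mathcal{B}:=\Mod(G)\cap\text{Thick}_{D(G)}(\Bbb{X}_U)$, the modules whose image in $D(G)$ lies in the triangulated thick envelope. It suffices to show $\mathcal{B}$ is thick in the sense of Definition \ref{thck}, since it clearly contains $\Bbb{X}_U$. A short exact sequence $0\to V'\to V\to V''\to0$ in $\Mod(G)$ yields a distinguished triangle $V'\to V\to V''\to\Sigma V'$ in $D(G)$. Because $\text{Thick}_{D(G)}(\Bbb{X}_U)$ is triangulated and closed under $\Sigma^{\pm1}$, any two terms lying in it force the third to lie in it; this gives the 2-out-of-3 property. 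A direct summand of a module is a direct summand in $D(G)$, and thick triangulated subcategories are closed under summands. Hence $\mathcal{B}$ is thick and contains $\text{Thick}_{\Mod(G)}(\Bbb{X}_U)$.

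None of these steps is a real obstacle. The only point needing care is in (c): checking that $\mathcal{A}$ meets the hypotheses of Lemma \ref{groth}, which is exactly what (a) provides, and identifying the two Serre envelopes.
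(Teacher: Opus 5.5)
Your proposal is correct and follows essentially the same route as the paper. In both, (a) reruns the induction from Lemma \ref{fg} using closure under extensions, (b) is the observation that Serre subcategories are thick, and (c) applies Lemma \ref{serre} to $\mathcal{A}=\text{Thick}_{\Mod(G)}(\Bbb{X}_U)$. For (d), both check that $\Mod(G)\cap\text{Thick}_{D(G)}(\Bbb{X}_U)$ is thick via the triangle attached to a short exact sequence, and your explicit remarks on finite direct sums and on identifying the two Serre envelopes just spell out steps the paper leaves implicit.
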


\begin{proof}
The proof of part (a) is a small alteration of the proof of Lemma \ref{fg}. Indeed $\text{Thick}_{\Mod(G)}(\Bbb{X}_U)$ contains 
$\text{ind}_U^G(F)$ for all finite-dimensional smooth $k[U]$-modules $F$, by induction on $\dim F$. This works because thick subcategories are closed under extensions. 
For $F=\text{ind}_{U'}^U(1)$ this gives what we want. 

Part (b) is trivial since Serre subcategories are thick. In particular $\text{Serre}_{\Mod(G)}(\Bbb{X}_U)$ is exactly the Serre subcategory generated by 
$\text{Thick}_{\Mod(G)}(\Bbb{X}_U)$. Therefore Lemma \ref{serre}, which applies to the additive subcategory $\mathcal{A}=\text{Thick}_{\Mod(G)}(\Bbb{X}_U)$ by part (a), implies part (c).

To show (d) it is enough to check that the intersection $\Mod(G) \cap \text{Thick}_{D(G)}(\Bbb{X}_U)$ is a thick subcategory of $\Mod(G)$. It is clearly closed under taking direct summands since $\text{Thick}_{D(G)}(\Bbb{X}_U)$ has this property, and $\Mod(G)$ is an additive subcategory of $D(G)$. To establish the 
2-out-of-3 property, note that a short exact sequence $0\rightarrow V' \rightarrow V \rightarrow V'' \rightarrow 0$ in $\Mod(G)$ gives a triangle 
$V' \rightarrow V \rightarrow C \rightarrow$ with $C \simeq V''$ in $D(G)$. As $\text{Thick}_{D(G)}(\Bbb{X}_U)$ is a strictly full triangulated subcategory, if it contains 2-out-of 
$V',V,V''$ it must contain all three. 
\end{proof}

We gather together some of the morphisms we have introduced so far in the following diagram, which commutes for trivial reasons (all the morphisms arise from restriction to various subcategories)
$$
\begin{tikzcd}
Z^0(D(G)) \arrow[rr, "\text{restriction}"] \arrow[d, twoheadrightarrow, "\pi"] & & Z^0( \text{Thick}_{D(G)}(\Bbb{X}_U)) \arrow[d, "\pi_T"] \\
Z(\Mod(G)) \arrow[r, twoheadrightarrow, "\rho"] & Z( \text{Serre}_{\Mod(G)}(\Bbb{X}_U))_\text{lred} \arrow[r, "\sim"] & Z(\text{Thick}_{\Mod(G)}(\Bbb{X}_U))_\text{lred}.
\end{tikzcd}
$$
Here $\pi_T$ comes from part (d) of Lemma \ref{thickprops}, and the lower right isomorphism comes from part (c). 

\begin{prop}\label{pit}
$\pi_T$ induces an isomorphism $Z^0(\text{Thick}_{D(G)}(\Bbb{X}_U))_\text{lred} \overset{\sim}{\longrightarrow} Z( \text{Thick}_{\Mod(G)}(\Bbb{X}_U))_\text{lred}$.
\end{prop}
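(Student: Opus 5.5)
Write $\mathcal{T}:=\text{Thick}_{D(G)}(\Bbb{X}_U)$ and $\mathcal{A}:=\text{Thick}_{\Mod(G)}(\Bbb{X}_U)$. The plan is to show three things about the map $\pi_T: Z^0(\mathcal{T})\to Z(\mathcal{A})_\text{lred}$, which is restriction to the subcategory $\mathcal{A}\subset\mathcal{T}$ of Lemma \ref{thickprops}(d) followed by the quotient map:
\begin{itemize}
\item[(i)] $\pi_T$ is well defined on $Z^0(\mathcal{T})_\text{lred}$, i.e. it kills locally nilpotent elements;
\item[(ii)] $\pi_T$ is surjective;
\item[(iii)] an element whose image is locally nilpotent on $\mathcal{A}$ is already locally nilpotent on $\mathcal{T}$.
\end{itemize}

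Step (i) is immediate. If $t\in Z^0(\mathcal{T})$ is nilpotent on every object of $\mathcal{T}$, it is in particular nilpotent on every object of $\mathcal{A}$, since $\mathcal{A}\subset\mathcal{T}$ and morphisms in $\Mod(G)$ agree with morphisms in $D(G)$ between complexes concentrated in degree zero.

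For surjectivity (ii) I would use the commutative square displayed before the proposition. Its bottom row is surjective: $\rho$ is surjective by Lemma \ref{groth} (via Lemma \ref{fg} and the generators), and the right-hand map is the isomorphism of Lemma \ref{thickprops}(c). The map $\pi$ is surjective by Lemma \ref{split}. Hence every element of $Z(\mathcal{A})_\text{lred}$ is the image of some element of $Z^0(D(G))$ under the bottom-then-left route. Since the square commutes, the same element is $\pi_T$ applied to the restriction of that element to $\mathcal{T}$. So $\pi_T$ is surjective.

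For injectivity (iii) I would use Lemma \ref{nilp}. Take $t\in Z^0(\mathcal{T})$ whose restriction to $\mathcal{A}$ is locally nilpotent. Then $t_{\Bbb{X}_U}$ is nilpotent, say $t_{\Bbb{X}_U}^m=0$, because $\Bbb{X}_U\in\mathcal{A}$. So the homogeneous element $t^m$ of $Z^0(\mathcal{T})$ vanishes on $\mathcal{X}=\{\Bbb{X}_U\}$. Lemma \ref{nilp}, applied with the ambient category $\mathcal{T}$ itself, gives that $t^{m2^n}$ vanishes on $\langle\Bbb{X}_U\rangle_{n+1}$ for every $n$. Every object of $\mathcal{T}$ lies in $\langle \Bbb{X}_U\rangle_{n+1}$ for some $n$, so $t$ is nilpotent on every object, i.e. locally nilpotent. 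This is exactly the remark following Lemma \ref{nilp}.

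I expect the only point needing care to be the bookkeeping in (ii), namely checking that the diagram really commutes after passing to the lred quotients. This reduces to the fact that every map in the diagram is restriction of a natural transformation to a full subcategory, and that each of these restrictions preserves local nilpotence. Otherwise the argument is formal.
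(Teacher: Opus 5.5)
Your proposal is correct and follows the paper's own proof. Surjectivity of $\pi_T$ is read off from the commutative square, using that $\pi$ and $\rho$ are surjective and that the map in Lemma \ref{thickprops}(c) is an isomorphism; injectivity on $(-)_\text{lred}$ comes from nilpotence of $t_{\Bbb{X}_U}$ combined with Lemma \ref{nilp}. Your extra step (i) and your concern about commutativity modulo lred are harmless, since every map in the square is a restriction and restriction preserves local nilpotence.
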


\begin{proof}
From the previous diagram we at once conclude that $\pi_T$ is surjective. If $t \in Z^0( \text{Thick}_{D(G)}(\Bbb{X}_U))$ and 
$t_V$ is nilpotent for all $V \in \text{Thick}_{\Mod(G)}(\Bbb{X}_U)$ then certainly $t_{\Bbb{X}_U}$ is nilpotent. By Lemma \ref{nilp} (see the paragraph right after its proof) we find that $t$ is zero in $Z^0( \text{Thick}_{D(G)}(\Bbb{X}_U))_\text{lred}$. 
\end{proof}

In conjunction with Proposition \ref{addendum}, and part (c) of Lemma \ref{thickprops}, Proposition \ref{pit} gives the following main result:

\begin{thm}\label{mainr}
Let $G$ be a locally profinite group that contains a pro-$p$ open subgroup $U$, and let $k$ be a field of characteristic $p$. Suppose $G$ contains no proper open centralizers. 
Then there is an isomorphism of $k$-algebras
$$
k[Z(G)/Z(G)\cap U]_\text{red} \overset{\sim}{\longrightarrow} Z^0(\text{Thick}_{D(G)}(\Bbb{X}_U))_\text{lred}
$$
induced by $Z(G) \ni c \mapsto (s\circ \Phi)(c)$.
\end{thm}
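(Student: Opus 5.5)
The plan is to compose the isomorphisms already established and then check that the composite is the map induced by $c\mapsto (s\circ\Phi)(c)$. Proposition \ref{addendum} gives the isomorphism
$$
\Psi: k[Z(G)/Z(G)\cap U]_\text{red} \overset{\sim}{\longrightarrow} Z(\text{Serre}_{\Mod(G)}(\Bbb{X}_U))_\text{lred}.
$$
Part (c) of Lemma \ref{thickprops} identifies the target with $Z(\text{Thick}_{\Mod(G)}(\Bbb{X}_U))_\text{lred}$. Proposition \ref{pit} says that $\pi_T$ induces an isomorphism from $Z^0(\text{Thick}_{D(G)}(\Bbb{X}_U))_\text{lred}$ onto this same ring. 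Composing the first two isomorphisms with the inverse of the third yields an abstract isomorphism
$$
k[Z(G)/Z(G)\cap U]_\text{red}\overset{\sim}{\longrightarrow} Z^0(\text{Thick}_{D(G)}(\Bbb{X}_U))_\text{lred}.
$$

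Next I will identify this isomorphism concretely. Consider the map $\theta: k[Z(G)]\to Z^0(\text{Thick}_{D(G)}(\Bbb{X}_U))$ given by $x\mapsto (s\circ\Phi)(x)$ followed by restriction. I will check three things.

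\begin{enumerate}
\item $\theta$ is a $k$-algebra homomorphism, since $s$, $\Phi$ and restriction all are.
\item $\pi_T\circ\theta$ agrees with $\rho\circ\Phi$ followed by the restriction isomorphism of Lemma \ref{thickprops}(c). This follows from the commutative square displayed before Proposition \ref{pit} together with $\pi\circ s=\text{Id}$ from Lemma \ref{split}.
\item $\theta$ descends to $k[Z(G)/Z(G)\cap U]_\text{red}$ after passing to lred.
\end{enumerate}

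Step 3 is the only point needing care, and I expect it to be the main (mild) obstacle. Take $x\in\sqrt{\ker(\text{pr})}$. By Lemma \ref{inject}, $(\rho\circ\Phi)(x)$ is locally nilpotent, so in particular $\Phi_{\Bbb{X}_U}(x)$ is nilpotent. Since $\pi_T(\theta(x))_{\Bbb{X}_U}=\Phi_{\Bbb{X}_U}(x)$, the component $\theta(x)_{\Bbb{X}_U}$ is nilpotent. Because $\theta(x)$ lies in $Z^0$ (it is homogeneous of degree zero), Lemma \ref{nilp} and the remark following it show that $\theta(x)$ is locally nilpotent on $\text{Thick}_{D(G)}(\Bbb{X}_U)$. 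Alternatively, this step follows directly from the injectivity part of Proposition \ref{pit}. So $\theta$ factors through $k[Z(G)]/\sqrt{\ker(\text{pr})}\simeq k[Z(G)/Z(G)\cap U]_\text{red}$, with values in the lred quotient.

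Finally, by step 2 the descended map, composed with the isomorphism induced by $\pi_T$, equals the composite of $\Psi$ with the restriction isomorphism of Lemma \ref{thickprops}(c). Since the induced $\pi_T$ is an isomorphism by Proposition \ref{pit}, the descended map equals the composite isomorphism constructed in the first paragraph. This proves the theorem, with the isomorphism induced by $c\mapsto(s\circ\Phi)(c)$ as claimed.
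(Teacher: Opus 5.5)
Your proposal is correct and is the paper's argument: the paper defines the isomorphism as the composite $\pi_T^{-1}\circ(\text{restriction from Lemma \ref{thickprops}(c)})\circ\Psi$, assembled from Proposition \ref{addendum}, Lemma \ref{thickprops}(c) and Proposition \ref{pit}. Your additional check that this composite is induced by $c\mapsto (s\circ\Phi)(c)$ — using $\pi\circ s=\text{Id}$, the commutative square before Proposition \ref{pit}, and the descent step — makes explicit what the paper leaves implicit.
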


In this statement $s$ denotes the section of $\pi$ constructed in the proof of Lemma \ref{split}. More precisely, the isomorphism in Theorem \ref{mainr} is the composition
$\pi_T^{-1} \circ \big(\text{restriction from (c) above}\big)\circ \Psi$.

\subsection{The case of a $p$-adic Lie group}\label{padiclie}

Our motivating example is the case of a $p$-adic Lie group $G$. In this case, under mild hypotheses, we can now compute the graded center of the subcategory of compact objects $D(G)^c$ up to locally nilpotent elements. 

\begin{cor}\label{maincor}
Let $G$ be a $p$-adic Lie group without proper open centralizers, and let $k$ be a field of characteristic $p$.
Then there is an isomorphism of $k$-algebras
$$
k[Z(G)/Z(G)\cap U]_\text{red} \overset{\sim}{\longrightarrow} Z^*(D(G)^c)_\text{lred}
$$
for any choice of pro-$p$ open subgroup $U \subset G$.
\end{cor}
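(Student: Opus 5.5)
The plan is to reduce to Theorem \ref{mainr} by identifying $D(G)^c$ with a thick envelope $\text{Thick}_{D(G)}(\Bbb{X}_{U_0})$. Then we show that the homogeneous elements of nonzero degree contribute nothing after passing to lred. Finally we remove the dependence on the choice of $U$.

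\emph{Step 1 (reduction to a torsion-free subgroup).} Given an arbitrary pro-$p$ open $U\subset G$, choose a torsion-free (e.g. uniform) pro-$p$ open subgroup $U_0\subset U$; this exists because $G$ is a $p$-adic Lie group. By Lemma \ref{indepz}, applied with $Z=Z(G)$ and the pro-$p$ open subgroups $Z(G)\cap U_0\subset Z(G)\cap U$, both $k[Z(G)/Z(G)\cap U]_\text{red}$ and $k[Z(G)/Z(G)\cap U_0]_\text{red}$ are identified with $\widehat{k[Z(G)]}_\text{red}$ via the natural projections. These identifications are compatible with $c\mapsto (s\circ\Phi)(c)$. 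So it suffices to treat $U_0$.

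\emph{Step 2 (compact objects).} By \cite{Sch15}, for $U_0$ torsion-free pro-$p$ the object $\Bbb{X}_{U_0}$ is a compact generator of $D(G)$. Its compactness comes from $\Hom_{D(G)}(\Bbb{X}_{U_0},-)=R\Gamma(U_0,-)$, which commutes with coproducts because $U_0$ has finite cohomological dimension. Neeman's theorem then gives $D(G)^c=\text{Thick}_{D(G)}(\Bbb{X}_{U_0})$. Theorem \ref{mainr} therefore yields
$$k[Z(G)/Z(G)\cap U_0]_\text{red}\overset{\sim}{\longrightarrow} Z^0(D(G)^c)_\text{lred}.$$

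\emph{Step 3 (nonzero degrees).} It remains to see that every homogeneous $t\in Z^r(D(G)^c)$ with $r\neq 0$ is locally nilpotent. Granting this, the ideal defining $Z^*(D(G)^c)_\text{lred}$ contains $\bigoplus_{r\ne 0}Z^r$. Its degree zero part is exactly the locally nilpotent elements of $Z^0$, because a product of homogeneous elements landing in degree $0$ is either itself locally nilpotent or has a locally nilpotent factor of degree $0$. Hence $Z^*(D(G)^c)_\text{lred}=Z^0(D(G)^c)_\text{lred}$.

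To prove the local nilpotence, evaluate at $\Bbb{X}=\Bbb{X}_{U_0}$. Frobenius reciprocity gives
$$t_{\Bbb{X}}^n\in\Hom_{D(G)}(\Bbb{X},\Sigma^{nr}\Bbb{X})=H^{nr}(U_0,\Bbb{X}_{U_0}).$$
If $r<0$ this group vanishes already for $n=1$. If $r>0$ it vanishes once $nr>\dim U_0$, since $\text{cd}(U_0)=\dim U_0$ by Lazard's theorem that $U_0$ is a Poincaré group. Either way $t^n$ vanishes on $\Bbb{X}$ for some $n$. Lemma \ref{nilp} then shows $t^{n2^m}$ vanishes on $\langle\Bbb{X}\rangle_{m+1}$, so $t$ is nilpotent on every object of $\text{Thick}_{D(G)}(\Bbb{X})=D(G)^c$.

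\emph{Main obstacle.} The main obstacle is Step 2: the precise identification $D(G)^c=\text{Thick}_{D(G)}(\Bbb{X}_{U_0})$ and the finiteness of $\text{cd}(U_0)$. Both rest on the torsion-freeness of $U_0$, which is where the $p$-adic Lie hypothesis enters. I would cite \cite{Sch15} for these facts rather than reprove them.
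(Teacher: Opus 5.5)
Your proposal is correct and follows the paper's proof essentially step by step. You reduce via Lemma \ref{indepz} to a torsion-free pro-$p$ open $U_0$, identify $D(G)^c=\text{Thick}_{D(G)}(\Bbb{X}_{U_0})$ (the paper simply cites \cite[Rem.~10]{Sch15}, where you invoke Neeman's theorem), apply Theorem \ref{mainr} in degree zero, and dispose of the nonzero degrees using $\text{cd}(U_0)=\dim U_0$ together with Lemma \ref{nilp}. You also check that the degree-zero part of the homogeneous ideal defining $(-)_\text{lred}$ is exactly the locally nilpotent part of $Z^0$, a point the paper leaves implicit, and you handle it correctly.
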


\begin{proof}
We have already seen in Lemma \ref{indepz} that $k[Z(G)/Z(G)\cap U]_\text{red}$ is independent of the choice of $U$ up to isomorphism, so we may take $U$ to be torsion-free. 
This ensures that $\text{Thick}_{D(G)}(\Bbb{X}_U)=D(G)^c$ by \cite[Rem.~10, p.~456]{Sch15}. Now Theorem \ref{mainr} gives an isomorphism
$$
k[Z(G)/Z(G)\cap U]_\text{red} \overset{\sim}{\longrightarrow} Z^0(D(G)^c)_\text{lred}.
$$
It remains to verify that all $t \in Z^i(D(G)^c)$ are locally nilpotent when $i \neq 0$. As mentioned in the paragraph after Lemma \ref{nilp} it suffices to observe that 
$t_{\Bbb{X}_U}\in \Ext_{\Mod(G)}^i(\Bbb{X}_U, \Bbb{X}_U)$ is nilpotent. This is obvious for $i<0$. For $i>0$ we note that $t_{\Bbb{X}_U}^{d+1}=0$, with $d:=\dim_{\Q_p}G$, since 
$H^i(U,-)$ vanishes in degrees outside the range $[0,d]$ (as $k[\![U]\!]$ has global dimension $d$ by virtue of $U$ being torsion-free). 
\end{proof}

This applies to $G={\bf{G}}(\frak{F})$ for any connected algebraic group ${\bf{G}}$ defined over a non-archimedean local field $\frak{F}/\Q_p$. See \cite[Thm.~6.13, p.~16]{AS23}
(which even applies to smooth ${\bf{G}}$ over $\frak{F}$ of characteristic $p$). 

When ${\bf{G}}$ is a split connected reductive group over $\frak{F}/\Q_p$ there is another approach to Corollary \ref{maincor} based on \cite[Thm.~3.1.10, p.~107]{Bod22}. The latter describes the degree zero part of the graded center of the pro-$p$ Iwahori $\Ext$-algebra $\Ext_{\Mod(G)}^*(\Bbb{X}_I, \Bbb{X}_I)$. Each $c \in Z(G)$ trivially gives an element $\tau_c=\text{char}_{IcI}\in Z^0(\Ext_{\Mod(G)}^*(\Bbb{X}_I, \Bbb{X}_I))$ and Bodon proves that this results in an isomorphism
$$
k[Z(G)/Z(G)\cap I] \overset{\sim}{\longrightarrow} Z^0(\Ext_{\Mod(G)}^*(\Bbb{X}_I, \Bbb{X}_I)).
$$
Moreover, this algebra is reduced by \cite[Rem.~3.1.2, p.~101]{Bod22} which explains why $Z(G)/Z(G)\cap I \simeq \Z^r\times A$ for some $r\geq 0$ and some finite abelian group $A$ such that $p \nmid |A|$. Therefore the group algebra is a subalgebra of $k(X_1,\ldots,X_r)[A]$ which is semisimple. 

On the other hand, under the assumption that $I$ is torsion-free so that $\Bbb{X}_I$ is in $D(G)^c$, the evaluation map $t \mapsto t_{\Bbb{X}_I}$ gives an injective homomorphism
$$
Z^0(D(G)^c)_\text{lred} \hookrightarrow Z^0(\Ext_{\Mod(G)}^*(\Bbb{X}_I, \Bbb{X}_I))
$$
as we have seen after Lemma \ref{nilp}. By Bodon's result this is necessarily also surjective, as follows from the commutative diagram
\begin{equation}\label{inidiag}
\begin{tikzcd}
Z(\Mod(G)) \arrow[r, "s"] & Z^0(D(G)) \arrow[r, "\text{res}"] & Z^0(D(G)^c) \arrow[r, "(\cdot)_{\Bbb{X}_I}"] & Z^0(\Ext_{\Mod(G)}^*(\Bbb{X}_I, \Bbb{X}_I)) \\
\widehat{k[Z(G)]} \arrow[rrr, twoheadrightarrow, "\widehat{\text{pr}}"] \arrow[u, "\Phi"] & & & k[Z(G)/Z(G)\cap I].  \arrow[u, "c\mapsto \tau_c", "\simeq"']
\end{tikzcd}
\end{equation}
Here $s$ denotes the splitting of $\pi$ from the proof of Lemma \ref{split}.

\begin{lem}
The previous diagram (\ref{inidiag}) commutes.
\end{lem}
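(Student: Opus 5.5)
Both routes around the diagram (\ref{inidiag}) are continuous $k$-algebra homomorphisms, or at least additive and compatible with the inverse limit. So the plan is to check commutativity on the dense subalgebra $k[Z(G)]$, and in fact on group elements $c\in Z(G)$. Going the upper way, $\Phi(c)$ is the natural transformation of $\Mod(G)$ given on each $V$ by the action $v\mapsto cv$. Then $s(\Phi(c))$ acts degreewise on complexes by $c$, and its restriction to $D(G)^c$ evaluated at $\Bbb{X}_I$ is just the endomorphism $f\mapsto cf$ of $\Bbb{X}_I=k[G/I]$, viewed as a degree-zero element of $\Ext^*_{\Mod(G)}(\Bbb{X}_I,\Bbb{X}_I)$, i.e. of $\End_G(\Bbb{X}_I)=\HH_I$. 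Going the lower way, $\widehat{\text{pr}}(c)$ is the class of $c$ in $k[Z(G)/Z(G)\cap I]$, which is sent to $\tau_c=\text{char}_{IcI}$. So everything comes down to one identity: under the standard identification $\End_G(\Bbb{X}_I)^{\op}\cong\HH_I$, the endomorphism ``multiplication by $c$'' corresponds to $\text{char}_{IcI}=\text{char}_{cI}$.

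To prove this identity I will use Frobenius reciprocity. A $G$-endomorphism $\phi$ of $\Bbb{X}_I$ is determined by $\phi(\text{char}_I)\in\Bbb{X}_I^I$, and the Hecke algebra element attached to $\phi$ is exactly this $I$-invariant function. Here $\phi(\text{char}_I)=c\cdot\text{char}_I=\text{char}_{cI}$. Since $c$ is central, $cI=IcI$, so this is $\text{char}_{IcI}=\tau_c$. The same computation shows the identification is independent of the $\op$ convention, because $\tau_c$ is central.

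Finally I pass from group elements to all of $\widehat{k[Z(G)]}$. By linearity the two compositions agree on $k[Z(G)]$. For a general $x$, the element $\widehat{\text{pr}}(x)$ depends only on the image of $x$ modulo the open ideal $\ker(\widehat{\text{pr}})$. Likewise, $\Phi(x)$ acting on $\Bbb{X}_I$ depends only on that image, because $\Bbb{X}_I$ is generated by $\text{char}_I$, which is fixed by $Z(G)\cap I$; this is the same argument as in the proof of Proposition~\ref{addendum}. So I can replace $x$ by an element of $k[Z(G)]$ with the same image.

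The only step needing genuine care is this last one. I need to confirm that the action $\Phi_{\Bbb{X}_I}(x)$ of a completed element really factors through $k[Z(G)/Z(G)\cap I]$, which amounts to unwinding the definition of $\Phi$ from \cite[Lem.~3.7]{AS23}. Everything else is bookkeeping with the definition of $s$ in Lemma~\ref{split}.
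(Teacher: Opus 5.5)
Your proposal is correct and is essentially the paper's argument. The top row reduces, via $\pi\circ s=\text{Id}$, to evaluating $\Phi_{\Bbb{X}_I}(x)$, and the key point is that $Z(G)\cap I$ acts trivially on $\Bbb{X}_I$, so the $\widehat{k[Z(G)]}$-action factors through $k[Z(G)/Z(G)\cap I]$. Your Frobenius-reciprocity check that the action of $c$ corresponds to $\text{char}_{cI}=\text{char}_{IcI}$ simply spells out what the paper compresses into ``the formula follows.''
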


\begin{proof}
Since $\pi \circ s=\text{Id}$, by the proof of Lemma \ref{split}, the composition in the top row is just the evaluation map
$Z(\Mod(G)) \rightarrow \End_{\Mod(G)}(\Bbb{X}_I)$. Starting with an $x \in \widehat{k[Z(G)]}$ and going up-then-right produces the endomorphism 
$\Phi_{\Bbb{X}_I}(x)$. 

Let $x=(x_{Z'})$ where 
$x_{Z'}\in k[Z(G)/Z']$ and $Z'$ runs over the compact open subgroups of $Z(G)$. We are interested in the component
$\widehat{\text{pr}}(x)=x_{Z(G)\cap I}=\sum_c a_cc$ where $c$ runs over a set of representatives for $Z(G)/Z(G)\cap I$.
What we have to check is that
$$
\sum_ca_c\tau_c=\Phi_{\Bbb{X}_I}(x).
$$ 
Since $Z(G)\cap I$ acts trivially on $\Bbb{X}_I$, its $\widehat{k[Z(G)]}$-module structure factors through $k[Z(G)/Z(G)\cap I]$, and the formula follows. 
\end{proof}


\section{Group cohomology as a source of central elements}

Let $G$ be a locally pro-$p$ group. As in \cite[Cor.~3.3, p.~42]{SS23} we may endow $D(G)$ with a monoidal structure $\otimes$ and thereby promote it to a tensor triangulated category with unit $k$. As a result we have a homomorphism of graded $k$-algebras
\begin{align*}
\varphi: \Ext_{\Mod(G)}^*(k,k)=\bigoplus_{r \in \Z} \Hom_{D(G)}(k, \Sigma^r k) &\longrightarrow Z^*(D(G)) \\
\eta & \longmapsto \big(\eta \otimes \text{Id}_{V^\bb}\big)_{V^\bb \in D(G)}.
\end{align*}
This is a right inverse of the evaluation-at-$k$ map, so in particular $\varphi$ is a split injection. We note that $\Ext_{\Mod(G)}^r(k,k)$ is isomorphic to the continuous group cohomology 
$H_\text{cts}^r(G,k)$ by \cite[Thm.~1.1]{Fus22}, but we will not make use of this fact for non-compact groups. The map $\varphi$ often gives a way to produce examples of non-trivial elements of $Z^*(D(G))$ in positive degrees. For finite $G$ the 'canonical map' $\varphi$ was also employed in \cite[Prop.~1.3]{Lin09}.

\begin{exmp}
Consider $G=\SL_2(\Q_p)$ with $p \geq 5$. In this case $\Ext_{\Mod(G)}^*(k,k)$ is fully computable. Since $G$ is the amalgamation of $K=\SL_2(\Z_p)$ and 
$K'=\begin{psmallmatrix}p & \\ & 1\end{psmallmatrix}^{-1}K\begin{psmallmatrix}p & \\ & 1\end{psmallmatrix}$ over $J=K\cap K'$ we have a long exact sequence
$$
\cdots \Ext_{\Mod(G)}^r(k,k) \longrightarrow \Ext_{\Mod(K)}^r(k,k) \times \Ext_{\Mod(K')}^r(k,k) \longrightarrow \Ext_{\Mod(J)}^r(k,k)  \longrightarrow \Ext_{\Mod(G)}^{r+1}(k,k) \cdots
$$
from \cite[Thm.~1.1]{Sch26}.
The reference \cite[Thm.~1.1]{DL25} gives the following:
\begin{itemize}
\item The restriction map $H^*(K,k) \longrightarrow H^*(J,k)$ is an isomorphism;
\item $H^*(J,k)$ is a free graded-commutative $k$-algebra on a generator of degree $3$, i.e.
$$
H^*(K,k) \underset{\text{res}}{\overset{\sim}{\longrightarrow}} H^*(J,k) \simeq k[v]/(v^2)
$$
where $\text{deg}(v)=3$.
\end{itemize}
The above long exact sequence therefore only has nonzero terms for $r=0,3$. For $r=3$ one infers that 
$\Ext_{\Mod(G)}^3(k,k)$ must be one-dimensional. We conclude that $\Ext_{\Mod(G)}^*(k,k)$ is concentrated in degrees $\{0,3\}$; in fact it is necessarily isomorphic to $k[v]/(v^2)$ as a graded $k$-algebra (where $v$ still has degree $3$).  

Via $\varphi$ this gives an example of a nonzero $t \in Z^3(D(G))$ with $t^2=0$. 
\end{exmp}


\section{An application to block decompositions}\label{appblock}

In this section, as an application of Corollary \ref{maincor}, we factor $D(G)$ into a direct product of finitely many \emph{indecomposable} subcategories under the additional assumption that $Z(G)$ is topologically finitely generated. See Proposition \ref{indecder} below. In Lemma \ref{topfingen} we verify that this applies to all $p$-adic reductive groups.

\subsection{Various finiteness properties}

We start with the following general remark.

\begin{lem}
Let $Z$ be a locally profinite group, assumed to be abelian and topologically finitely generated. Then $Z$ has a unique maximal compact subgroup $Z_0$. This $Z_0$ is 
open and profinite, $Z/Z_0$ is a free abelian group of some finite rank $r$, and there is a non-canonical isomorphism of topological groups  
$Z_0 \times \Z^r \overset{\sim}{\longrightarrow} Z$.
\end{lem}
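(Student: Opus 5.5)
Since $Z$ is locally profinite, it has a compact open subgroup $C$, and $C$ is profinite. The plan is to study the discrete group $Z/C$ and lift the resulting structure back to $Z$.

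\textbf{Step 1: reduction to a discrete group.} Because $C$ is open, $Z/C$ is a discrete abelian group. If $z_1,\dots,z_n$ generate a dense subgroup of $Z$, their images generate a dense subgroup of the discrete group $Z/C$, hence all of it. So $Z/C$ is a finitely generated abelian group, and we can write $Z/C \simeq F\times \Z^r$ with $F$ finite.

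\textbf{Step 2: the maximal compact subgroup.} Let $Z_0$ be the preimage of $F$ in $Z$. It is an extension of the finite group $F$ by the compact group $C$, so it is compact. It is open because it contains $C$. It is closed because it is open, so it is profinite. Now let $K\subset Z$ be any compact subgroup. Its image in the discrete group $Z/C$ is compact, hence finite, hence a torsion subgroup. The torsion subgroup of $F\times\Z^r$ is $F$, so $K\subset Z_0$. Therefore $Z_0$ is the unique maximal compact subgroup, and $Z/Z_0\simeq (Z/C)/F\simeq \Z^r$ is free of rank $r$.

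\textbf{Step 3: splitting.} Choose $z_1,\dots,z_r\in Z$ lifting a basis of $Z/Z_0$. Since $Z$ is abelian, $e_i\mapsto z_i$ defines a homomorphism $\Z^r\to Z$ that splits the projection $Z\to Z/Z_0$. The map
$$Z_0\times\Z^r\to Z,\qquad (z,n)\mapsto z\prod_i z_i^{n_i}$$
is then a bijective group homomorphism. It is continuous because $\Z^r$ is discrete. It is also open: for an open $W\subset Z_0$, the image of $W\times\{n\}$ is the translate $W\prod_i z_i^{n_i}$, which is open in $Z$ because $Z_0$ is open in $Z$. So it is an isomorphism of topological groups.

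The only point needing care is Step 2, namely that the torsion subgroup $F$ really corresponds to all compact subgroups. That follows from the observation that compact subgroups of a discrete group are finite.
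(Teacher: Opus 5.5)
Your proof is correct and takes essentially the same approach as the paper: pass to the finitely generated discrete quotient by a compact open subgroup, let $Z_0$ be the preimage of its torsion subgroup, and split off $\Z^r$ by lifting a basis of $Z/Z_0$. The only difference is cosmetic. You show directly that every compact subgroup has finite image in $Z/C$, so lies in $Z_0$, which gives maximality and uniqueness at once. The paper instead first builds the splitting and then argues maximality and uniqueness from the fact that compact subgroups have trivial image in $\Z^r$.
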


\begin{proof}
Choose an open subgroup $U \subset Z$ that is profinite in the induced topology. Then $Z/U$ is a finitely generated abelian group and we write its torsion subgroup as $U'/U$ for some $U'\supset U$ which is necessarily also open and profinite. Then $Z/U'$ is free of some finite rank $r$ and we identify it with $\Z^r$. The choice of a splitting $s: \Z^r \rightarrow Z$ gives rise to an isomorphism of topological groups $U' \times \Z^r \overset{\sim}{\longrightarrow} Z$ sending $(z,\lambda)\mapsto z\cdot s(\lambda)$. (This is clearly a continuous bijection and it takes open sets to open sets.)

It is now immediate that $U'$ is maximal among all the compact subgroups of $Z$, for if $U'' \supset U'$ is compact the image of $U''/U'$ in $\Z^r$ must be trivial. It is the only one, for if $\widetilde{U}$ is any maximal compact subgroup its image in $\Z^r$ is trivial, i.e. $\widetilde{U} \subseteq U'$, and this must be an equality by maximality. 
\end{proof}

We keep the same $Z$, abelian and topologically finitely generated, but we now assume in addition that $Z$ is locally pro-$p$ and consider $\widehat{k[Z]}$ for some field $k$ of characteristic $p$. 

Let $Z_0\subset Z$ be the maximal compact subgroup, $P \subset Z_0$ be the unique Sylow pro-$p$ subgroup, and let $Q \subset Z_0$ be the unique finite subgroup of order prime-to-$p$ such that $Z_0=P\times Q$. Correspondingly 
we have a (non-canonical) isomorphism of $k$-algebras
\begin{align*}
\widehat{k[Z]} \simeq \widehat{k[Z_0 \times \Z^r]}&={\varprojlim}_{V \subset P} k[P/V \times Q \times \Z^r] \\
&={\varprojlim}_{V \subset P} \big(k[P/V] \otimes_k k[Q] \otimes_k k[\Z^r] \big)\\
&=\big({\varprojlim}_{V \subset P} k[P/V] \otimes_k k[\Z^r] \big) \otimes_k k[Q] \\
&=\widehat{k[P \times \Z^r]} \otimes_k k[Q].
\end{align*}
(Here $V$ runs over the compact open subgroups of $P$, and we can take $k[Q]$ out of the limit because it is finite-dimensional.) 

To better understand the algebra $\widehat{k[P \times \Z^r]}$
we now pass to the fraction field $k(\Z^r)$ of $k[\Z^r]$. We note that $k[\Z^r]$ is a Laurent polynomial algebra 
$k[X_1^{\pm 1},\ldots, X_r^{\pm 1}]$ so the fraction field $k(\Z^r)$ is just shorthand notation for the field of rational functions $k(X_1,\ldots, X_r)$. Continuing the thread above we have an embedding of $k$-algebras
$$
\widehat{k[P \times \Z^r]}={\varprojlim}_{V \subset P} \big(k[P/V] \otimes_k k[\Z^r] \big) \hookrightarrow 
{\varprojlim}_{V \subset P} \big(k[P/V] \otimes_k k(\Z^r) \big)=k(\Z^r)[\![P]\!].
$$
The target $k(\Z^r)[\![P]\!]$ is a local ring since $P$ is pro-$p$ and the coefficient field $k(\Z^r)$ is of characteristic $p$ (see the proof of part i of \cite[Prop.~19.7, p.~162]{Sch11} for example); in particular it has no idempotents $e$ other than $0$ and $1$ -- we cannot have both $e$ and $1-e$ in the maximal ideal, so either $e$ is invertible or $1-e$ is.
Therefore the subalgebra $\widehat{k[P \times \Z^r]}$ certainly does not have any idempotents other than $0$ and $1$.

Note that $k[Q]$ is semisimple, so it is a product
$\prod_{i \in I} k_i$ of finite field extensions $k_i/k$. Thus we can decompose $\widehat{k[Z]}$ further and get a (non-canonical) isomorphism 
$$
\widehat{k[Z]} \simeq \prod_{i \in I} \widehat{k_i[P \times \Z^r]}.
$$
This allows us to give a precise count of the number of idempotents in $\widehat{k[Z]}$; indeed from our previous observations the right-hand side has exactly 
$2^{|I|}$ idempotents. Apropos of this, here is a related result. 

\begin{lem}\label{idempbij}
$\widehat{k[Z]}_\text{red}$ is a Noetherian ring, and the projection $\widehat{k[Z]} \twoheadrightarrow \widehat{k[Z]}_\text{red}$ restricts to a bijection 
$$
\{\text{idempotents in $\widehat{k[Z]}$}\} \overset{1:1}{\longleftrightarrow} \{\text{idempotents in $\widehat{k[Z]}_\text{red}$}\}.
$$
These sets are also in bijection with the set of all idempotents in $k[Q]$.
\end{lem}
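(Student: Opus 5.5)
Lemma \ref{indepz} gives $\widehat{k[Z]}_\text{red}\simeq k[Z/Z']_\text{red}$ for any pro-$p$ open subgroup $Z'\subset Z$, so I will work with this finitely presented quotient. Take $Z'=P$, which is pro-$p$, open in $Z_0$, and hence open in $Z$. The splitting $Z\simeq P\times Q\times\Z^r$ then gives
$$
\widehat{k[Z]}_\text{red}\simeq k[Q\times\Z^r]_\text{red}=\big(k[Q]\otimes_k k[X_1^{\pm1},\ldots,X_r^{\pm1}]\big)_\text{red}.
$$
Since $k[Q]\simeq\prod_{i\in I}k_i$, we get $k[Q\times\Z^r]\simeq\prod_{i\in I}k_i[X_1^{\pm1},\ldots,X_r^{\pm1}]$. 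Each factor is a Laurent polynomial ring over a field, so it is a Noetherian domain. Hence $k[Q\times\Z^r]$ is already reduced, and $\widehat{k[Z]}_\text{red}$ is a finite product of Noetherian domains. This proves Noetherianity. It also shows that $\widehat{k[Z]}_\text{red}$ has exactly $2^{|I|}$ idempotents: they are the products of $0$'s and $1$'s, which are precisely the images of the idempotents of $k[Q]$.

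For the bijection, I first note that the projection takes idempotents to idempotents. The earlier computation $\widehat{k[Z]}\simeq\prod_{i\in I}\widehat{k_i[P\times\Z^r]}$ shows that the idempotents of $\widehat{k[Z]}$ are exactly the $2^{|I|}$ elements $e_J=\sum_{i\in J}e_i$ for $J\subseteq I$, where $e_i$ runs over the primitive idempotents of $k[Q]\subset\widehat{k[Z]}$. The projection followed by the isomorphism of Lemma \ref{indepz} sends $e_J$ to the idempotent of $\prod_i k_i[\Z^r]$ with support $J$. This map is visibly a bijection between the two $2^{|I|}$-element sets, and both sets are identified with the idempotents of $k[Q]$ via $J\leftrightarrow e_J$.

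Alternatively, injectivity follows without counting. If idempotents $e,f$ have the same image, then $e-f$ is topologically nilpotent. Also $(e-f)^3=e-f$, since $e$ and $f$ commute. Hence $e-f=(e-f)^{3^n}\to0$, so $e=f$.

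The only point needing care is checking that the non-canonical splitting is compatible with the projection $\widehat{\text{pr}}$ onto $k[Z/P]$. This holds because $k[Q]$ sits inside $\widehat{k[Z]}$ canonically and maps isomorphically onto $k[Q]\subset k[Z/P]$, so the $e_i$ are tracked canonically.
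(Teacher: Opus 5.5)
Your proof is correct and takes essentially the paper's route: apply Lemma \ref{indepz} with $Z'=P$ to get $\widehat{k[Z]}_\text{red}\simeq\prod_{i\in I}k_i[\Z^r]$, a finite product of Noetherian domains, then match its $2^{|I|}$ idempotents with those of $\widehat{k[Z]}\simeq\prod_{i\in I}\widehat{k_i[P\times\Z^r]}$ and of $k[Q]$. Two things you spell out go beyond the paper, which leaves them implicit: that the projection actually sends $e_J$ to the idempotent supported on $J$, and the count-free injectivity argument via $(e-f)^3=e-f$.
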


\begin{proof}
Observe that 
$\widehat{k[Z]}_\text{red}\simeq \prod_{i \in I} k_i[\Z^r]$ by taking the $Z'$ in Lemma \ref{indepz} to be $P$ viewed as a subgroup of $P \times \Z^r$. The set of idempotents in $\widehat{k[Z]}_\text{red}$ is therefore in bijection with the set of tuples $(\varepsilon_i)_{i \in I}$ with components $\varepsilon_i\in \{0,1\}$, similarly to what happens for $\widehat{k[Z]}$ and $k[Q]$.
\end{proof}

When $Z$ is assumed to be a $p$-adic Lie group, the whole completion $\widehat{k[Z]}$ itself is even a Noetherian ring (as opposed to just its topological nilreduction) and this applies to the center of a $p$-adic reductive group as the next result shows.

\begin{lem}\label{topfingen}
Let $k$ be a field of characteristic $p$. Then the following is true. 
\begin{itemize}
\item[(a)] If $Z$ is a $p$-adic Lie group, assumed to be abelian and topologically finitely generated, then $\widehat{k[Z]}$ is Noetherian;
\item[(b)] Let ${\bf{T}}$ be an algebraic torus defined over a finite extension $\frak{F}/\Q_p$. Then $T:={\bf{T}}(\frak{F})$ is topologically finitely generated; more generally --
\item[(c)] Let ${\bf{G}}$ be a reductive group defined over a finite extension $\frak{F}/\Q_p$, and let $G:={\bf{G}}(\frak{F})$. Then the center $Z(G)$ is 
topologically finitely generated and $\widehat{k[Z(G)]}$ is Noetherian.
\end{itemize}
\end{lem}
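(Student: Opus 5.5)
For part (a), I will use the decomposition established just above. By the preceding lemma we have $Z \simeq Z_0\times\Z^r$ with $Z_0=P\times Q$, and hence
$$
\widehat{k[Z]}\simeq \widehat{k[P\times\Z^r]}\otimes_k k[Q].
$$
Since $k[Q]$ is finite-dimensional, it suffices to show that $\widehat{k[P\times\Z^r]}=\varprojlim_V \big(k[P/V]\otimes_k k[\Z^r]\big)$ is Noetherian. Here $P$ is a compact abelian $p$-adic Lie group, so it is topologically finitely generated. Its torsion subgroup is finite, so $P\simeq \Z_p^d\times F$ with $F$ a finite abelian $p$-group. This gives
$$
\widehat{k[P\times\Z^r]}\simeq k[\![\Z_p^d]\!]\,\widehat{\otimes}\,k[\Z^r]\otimes_k k[F].
$$
Concretely, this is $k[F]\otimes_k A$ with $A=\varprojlim_n k[X_1^{\pm1},\dots,X_r^{\pm1}][T_1,\dots,T_d]/(T_1^{p^n},\dots,T_d^{p^n})$. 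Since $(T_1,\dots,T_d)$ and $(T_1^{p^n},\dots,T_d^{p^n})$ generate cofinal families of ideals, $A$ is the $(T)$-adic completion of the polynomial ring over the Laurent ring:
$$
A\simeq k[X^{\pm1}][\![T_1,\dots,T_d]\!].
$$
This ring is Noetherian by Hilbert's basis theorem together with the Noetherianity of power series rings over Noetherian rings. Finally, $k[F]\otimes_k A$ is a finite free $A$-module, hence Noetherian as a ring. The only point requiring care is checking that the inverse limit over all compact open $V\subset P$ really is this completion; this follows because the subgroups $p^n\Z_p^d$ are cofinal among the $V$.

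For part (b), the plan is as follows. Let $T^0\subset T$ be the maximal bounded (compact) subgroup. There is the standard exact sequence
$$
1\to T^0\to T\to \Hom(X^*(\mathbf T)_{\frak F},\Z),
$$
in which $T/T^0$ embeds into a lattice and is therefore finitely generated free. The group $T^0$ is a compact $p$-adic Lie group, hence topologically finitely generated: it has an open uniform pro-$p$ subgroup, which is finitely generated, and the quotient by it is finite. An extension of a finitely generated discrete group by a topologically finitely generated open subgroup is topologically finitely generated. Alternatively, one can avoid the structure theory: $T$ is an abelian $p$-adic Lie group, so $T/T^0$ being finitely generated suffices. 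That finiteness comes from the valuation map above.

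For part (c), let $\mathbf Z$ be the center of $\mathbf G$. Then $Z(G)=\mathbf Z(\frak F)$, and $\mathbf Z$ is of multiplicative type, with identity component $\mathbf Z^\circ$ a torus. Since $\mathbf Z^\circ$ has finite index in $\mathbf Z$ as an algebraic group, $\mathbf Z^\circ(\frak F)$ has finite index in $\mathbf Z(\frak F)$. This uses the finiteness of $H^1(\frak F,-)$ of the finite component group, or equivalently the finiteness of $(\mathbf Z/\mathbf Z^\circ)(\frak F)$ combined with the Galois cohomology of the torus. Part (b) then shows that $Z(G)$ is topologically finitely generated. It is a $p$-adic Lie group, being a closed subgroup of $G$, so part (a) gives that $\widehat{k[Z(G)]}$ is Noetherian.

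I expect the main obstacle to be the finite-index claim in (c). If $\mathbf Z$ is not smooth in characteristic $0$, there is no problem, since everything is smooth. One then uses the exact sequence $\mathbf Z^\circ(\frak F)\to\mathbf Z(\frak F)\to\pi_0(\mathbf Z)(\frak F)$, whose last term is finite.
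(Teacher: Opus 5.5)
Your proposal is correct. In (a) and (c) you follow the paper in substance. For (a), the paper also reduces to $A[\![Z_0]\!]$ with $A=k[\Z^r]$ and shows it is finite over a power series ring $A[\![X_1,\ldots,X_d]\!]$. It does this with a uniform open subgroup $Z_1\simeq\Z_p^d$ of $Z_0$, whereas you split $P\simeq\Z_p^d\times F$ by the structure theorem for finitely generated $\Z_p$-modules and obtain a finite free extension. For (c), you use the same finite-index reduction to the torus $\mathbf{Z}^\circ$. No $H^1$ is needed there: left exactness of $\mathbf Z^\circ(\frak F)\to\mathbf Z(\frak F)\to\pi_0(\mathbf Z)(\frak F)$ and finiteness of the last term suffice, as your final sentence says.

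The genuine difference is in (b). The paper uses the isogeny $\mathbf T_a\times\mathbf T_s\to\mathbf T$ and gets a finite cokernel for $T_a\times T_s\to T$ from the finiteness of $H^1(\frak F,\mu_n)\simeq\frak F^\times/\frak F^{\times n}$. It then handles $T_s$ through the explicit structure of $\frak F^\times$, and treats $T_a$ as a compact $p$-adic Lie group, taking the compactness of anisotropic tori for granted. Its alternative proof embeds $T$ in $(\frak E^\times)^\ell$ and shows via $\exp/\log$ that every closed subgroup there is topologically finitely generated.

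You instead use the valuation map $T\to\Hom(X^*(\mathbf T)_{\frak F},\Z)$. This produces the lattice quotient directly and avoids both Galois cohomology and the split/anisotropic decomposition. All the weight then rests on the compactness of its kernel, which you only call standard; it deserves its one-line proof. Take a finite Galois splitting field $\frak E/\frak F$ with group $\Gamma$ and $\chi\in X^*(\mathbf T)$. The character $\sum_{\sigma\in\Gamma}\sigma\chi$ is $\frak F$-rational, and its value at $t\in T$ has $\frak E$-valuation $|\Gamma|\,v_{\frak E}(\chi(t))$. Hence the kernel lies in the compact group $T\cap(\frak O_{\frak E}^\times)^\ell$.

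With that supplied, your route is self-contained. Applied to an anisotropic torus, it also proves the compactness of $T_a$ that the paper assumes. It isolates the topological finite generation of the compact part $T^0$ as the only place where characteristic zero enters. By contrast, the paper also uses characteristic zero for the finiteness of $\frak F^\times/\frak F^{\times n}$.
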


\begin{proof}
An abelian uniform pro-$p$ group of rank $d$ is isomorphic to $\Z_p^d$ as a topological group. (The homeomorphism $c$ on \cite[p.~195]{Sch11} respects the group structure in the abelian case.)

Let $A$ be a commutative $k$-algebra. Then $A[\![\Z_p^d]\!]$ is isomorphic to the power series algebra 
$A[\![X_1,\ldots, X_d]\!]$. Indeed, if we let $g_i:=(0,\ldots,1,\ldots,0) \in \Z_p^d$ with $1$ in the $i^\text{th}$ slot, then the map $X_i \mapsto g_i-1$ gives compatible isomorphisms 
$$
k[X_1,\ldots,X_d]/(X_1^{p^n},\ldots, X_d^{p^n}) \overset{\sim}{\longrightarrow} k[(\Z/p^n\Z)^d]
$$
for all $n$. (They are clearly surjective, and the two algebras above both have dimension $p^{nd}$ over $k$.) Now tensor by $A$ over $k$ and pass to the limit. This results in an isomorphism of $A$-algebras
$$
A[\![X_1,\ldots, X_d]\!]=\varprojlim_n A[X_1,\ldots,X_d]/(X_1^{p^n},\ldots, X_d^{p^n})  \overset{\sim}{\longrightarrow} 
\varprojlim_n A[(\Z/p^n\Z)^d]=A[\![\Z_p^d]\!].
$$
In particular $A[\![\Z_p^d]\!]$ is Noetherian when $A$ is Noetherian. Applying this to the coefficient ring $A:=k[\Z^r]$ yields part (a) as follows. Since $Z_0$ is a compact $p$-adic Lie group it contains a uniform pro-$p$ open subgroup $Z_1\simeq \Z_p^d$ of finite index. As $A[\![Z_0]\!]$ is finitely generated as an $A[\![Z_1]\!]$-module, and $A[\![Z_1]\!]\simeq A[\![X_1,\ldots, X_d]\!]$
is Noetherian, 
$$
\widehat{k[Z]}\simeq \varprojlim_{V\subset Z_0} \big(k[Z_0/V]\otimes_k k[\Z^r]\big)=A[\![Z_0]\!]
$$ 
is Noetherian as well.

For part (b) let ${\bf{T}}_s$ be the maximal $\frak{F}$-split subtorus of ${\bf{T}}$, and let ${\bf{T}}_a$ be the maximal $\frak{F}$-anisotropic subtorus of ${\bf{T}}$. The multiplication map gives an isogeny ${\bf{T}}_a \times {\bf{T}}_s\rightarrow {\bf{T}}$ with kernel ${\bf{K}}$ isomorphic to a product $\mu_{n_1}\times \cdots \times \mu_{n_l}$ for some $n_i$. The long exact sequence in Galois cohomology starts off like 
$$
0 \longrightarrow K \longrightarrow T_a \times T_s \longrightarrow T \longrightarrow H^1(\frak{F}, {\bf{K}}(\overline{\frak{F}})) \longrightarrow \cdots,
$$
where $T_s:={\bf{T}}_s(\frak{F})$ and so on. Clearly $K$ is a finite group, and so is the term $H^1(\frak{F}, {\bf{K}}(\overline{\frak{F}}))$. Indeed ${\bf{K}}(\overline{\frak{F}})$ is isomorphic to 
$\mu_{n_1}(\overline{\frak{F}})\times \cdots \times \mu_{n_l}(\overline{\frak{F}})$ as Galois-modules, and $H^1(\frak{F},-)$ preserves direct sums, so we are reduced to noting that 
$H^1(\frak{F},\mu_{n}(\overline{\frak{F}}))\simeq \frak{F}^\times/ \frak{F}^{\times n}$ is finite for all $n$ since $\frak{F}$ is a $p$-adic field. 

Now, knowing that $T_a \times T_s \rightarrow T$ has finite cokernel, we just have to verify that $T_s$ and $T_a$ are both topologically finitely generated. The split case follows immediately from the one-dimensional case where $\frak{F}^\times\simeq \Z \times \mu_\infty(\frak{F}) \times \Z_p^{[\frak{F}:\Q_p]}$ is visibly topologically finitely generated (again because $\frak{F}$ is a characteristic zero local field). For the anisotropic case we note that $T_a$ is a compact $p$-adic Lie group. It therefore contains a uniform pro-$p$ open subgroup, necessarily of finite index, and in particular $T_a$ is topologically finitely generated (since uniform groups have this property). 

Here is an alternative proof of (b) to cater to the taste of the reader. Suppose ${\bf{T}}$ splits over the finite extension $\frak{E}/\frak{F}$ and identify 
${\bf{T}}\times \frak{E}$ with $\Bbb{G}_m^\ell$. Note that $T$ is a closed subgroup of ${\bf{T}}(\frak{E})$ so it suffices to show that every closed subgroup $S$ of 
$(\frak{E}^\times)^\ell=\frak{E}^\times \times \cdots \times \frak{E}^\times$ is topologically finitely generated. We choose an $N$ large enough that exp and log restrict to mutually inverse isomorphisms $\frak{m}_\frak{E}^N \overset{\sim}{\longrightarrow} 1+\frak{m}_\frak{E}^N$. Now $\frak{E}^\times/1+\frak{m}_\frak{E}^N$ is clearly a finitely generated abelian group, and therefore the subgroup
$$
S/ \big(S \cap (1+\frak{m}_\frak{E}^N)^\ell \big) \hookrightarrow (\frak{E}^\times/1+\frak{m}_\frak{E}^N)^\ell
$$
is finitely generated as well. Therefore it is enough to show that $S \cap (1+\frak{m}_\frak{E}^N)^\ell$ is topologically finitely generated, but via exp and log this subgroup corresponds to a finitely generated $\Z_p$-submodule of $(\frak{m}_\frak{E}^N)^\ell$.

Moving on to part (c), for the purpose of this proof we let ${\bf{C}}$ denote the full center of ${\bf{G}}$ and consider the identity component ${\bf{C}}^\circ$. Since 
${\bf{C}}^\circ$ is a torus we know from part (b) that ${\bf{C}}^\circ(\frak{F})$ is topologically finitely generated, hence so is ${\bf{C}}(\frak{F})=Z(G)$ because the index is finite. Part (a) now allows us to deduce that indeed $\widehat{k[Z(G)]}$ is a Noetherian ring.
\end{proof}

We note that $\Q_p$ is an example of an abelian $p$-adic Lie group which is not topologically finitely generated (a finite subset is contained in $p^n\Z_p$ for some $n$). 

\subsection{Blocks of smooth representations}

We return to the case of an arbitrary locally pro-$p$ group $G$ and a coefficient field $k$ of characteristic $p$. In this section we assume $Z:=Z(G)$ is topologically finitely generated. 
As above we identify $Z$ with $Z_0\times \Z^r$ and factor the maximal compact subgroup of the center as $Z_0=P \times Q$. 
We note that every continuous character $\chi:Z_0\rightarrow k^\times$ is automatically trivial on $P$, and we interchangeably think of $\chi$ as a character 
$\chi: Q\rightarrow k^\times$.

Recall that $k[Q]\simeq \prod_{i\in I}k_i$ and we let $(e_i)_{i \in I}$ denote the set of primitive idempotents in $k[Q]$, corresponding to $(0,\ldots,1,\ldots,0)$ with $1$ in the $i^\text{th}$ position. They are mutually orthogonal and $\sum_{i \in I}e_i=1$.

\begin{rem}
When $k$ is large enough, in the sense that every character $\chi: Q\rightarrow \overline{k}^\times$ takes values in $k$, these idempotents are given by the standard formula 
$e_\chi=\frac{1}{|Q|}\cdot \sum_{g \in Q} \chi(g)^{-1}g$. However, our field $k$ will remain arbitrary (of characteristic $p$). 
\end{rem} 

For each $i \in I$ we consider the largest subspace
$e_i V\subseteq V$ on which $e_i$ acts as the identity. Clearly $e_{i'}$ annihilates $e_i V$ for $i \neq i'$. Any $V \in \Mod(G)$ decomposes canonically as a direct sum of $G$-subrepresentations
$V=\bigoplus_{i\in I} e_i V$, and clearly $\Hom_{\Mod(G)}(e_i V, e_{i'}V')=0$ for $i \neq i'$, so this in fact gives a factorization of the category $\Mod(G)$ into a direct product 
\begin{equation}\label{factormod}
\Mod(G)=\prod_{i \in I} \Mod^{e_i}(G)
\end{equation}
of the full subcategories $\Mod^{e_i}(G)$ (the objects of which are those $V$ such that $e_i$ acts as the identity, i.e. $e_i V=V$). Similarly $\widehat{k[Z]}$ decomposes as a finite direct sum of ideals $\widehat{k[Z]}=\bigoplus_{i \in I} e_i \widehat{k[Z]}$. Here the algebra $e_i \widehat{k[Z]}$ is not a subalgebra of $\widehat{k[Z]}$ unless they share the same identity element, i.e. $e_i=1$.   


\begin{lem}\label{idemp}
The following hold:
\begin{itemize}
\item[(a)] For all $i \in I$ there is an isomorphism of $k$-algebras 
$$
\widehat{k_i[P \times \Z^r]}\overset{\sim}{\longrightarrow}e_i \widehat{k[Z]};
$$
\item[(b)] $\widehat{k_i[P \times \Z^r]}$ has topological nilreduction $\widehat{k_i[P \times \Z^r]}_\text{red} \overset{\sim}{\longrightarrow} k_i[\Z^r]$;
\end{itemize}
\end{lem}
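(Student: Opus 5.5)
For part (a) I will start from the non-canonical isomorphism established above,
$$
\widehat{k[Z]} \simeq \widehat{k[P \times \Z^r]} \otimes_k k[Q],
$$
which comes from the identification $Z \simeq P \times Q \times \Z^r$. Under this isomorphism the idempotents $e_i \in k[Q] \subset \widehat{k[Z]}$ become $1 \otimes e_i$. Multiplying by $1\otimes e_i$ gives
$$
e_i\widehat{k[Z]} \simeq \widehat{k[P\times\Z^r]}\otimes_k e_ik[Q] \simeq \widehat{k[P\times\Z^r]}\otimes_k k_i ,
$$
where $e_i k[Q]\simeq k_i$ is the $i$th factor of the Wedderburn decomposition $k[Q]\simeq\prod_{i\in I}k_i$. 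Here $e_i\widehat{k[Z]}$ is regarded as a $k$-algebra with identity $e_i$.

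It then remains to identify $\widehat{k[P\times\Z^r]}\otimes_k k_i$ with $\widehat{k_i[P\times\Z^r]}$. Since $k_i/k$ is finite, the tensor product commutes with the inverse limit, exactly as in the earlier computation where $k[Q]$ was pulled out of the limit. Concretely,
$$
\Big({\varprojlim}_{V\subset P} k[P/V]\otimes_k k[\Z^r]\Big)\otimes_k k_i={\varprojlim}_{V}\, k_i[P/V]\otimes_{k_i}k_i[\Z^r],
$$
because $-\otimes_k k_i$ is a finite direct sum of copies of the identity functor on $k$-vector spaces and therefore commutes with limits. Each step is a ring map, so this gives the $k$-algebra isomorphism claimed in (a).

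For part (b) I will apply Lemma \ref{indepz} to the abelian locally pro-$p$ group $P\times\Z^r$, with coefficient field $k_i$ in place of $k$ (the lemma holds for any field of characteristic $p$), and take the pro-$p$ open subgroup $Z'=P\times\{0\}$. This gives
$$
\widehat{k_i[P\times\Z^r]}_{\text{red}}\overset{\sim}{\longrightarrow} k_i[\Z^r]_{\text{red}}.
$$
The Laurent polynomial ring $k_i[\Z^r]=k_i[X_1^{\pm1},\ldots,X_r^{\pm1}]$ is a domain, hence reduced, so the right-hand side is just $k_i[\Z^r]$.

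The only step needing care is in (a): I must check that the completion indexing matches on both sides. In the definition (\ref{compl}), $\widehat{k[Z]}$ is a limit over all compact open subgroups of $Z$, whereas the formula above uses only compact open subgroups $V\subset P$. The subgroups $V\subset P$ are cofinal among compact open subgroups of $Z$ after one discards the finite factor $Q$, since every compact open subgroup of $Z$ lies in $Z_0=P\times Q$ and contains some $V\times\{1\}$. This cofinality is already implicit in the paper's earlier display, so it can simply be cited.
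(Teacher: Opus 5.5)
Your proposal is correct and follows the paper's proof: (a) comes from the decomposition $\widehat{k[Z]}\simeq\widehat{k[P\times\Z^r]}\otimes_k k[Q]$, and (b) from Lemma \ref{indepz} applied with $Z'=P$ and coefficients $k_i$. You simply spell out details the paper leaves implicit, namely commuting $-\otimes_k k_i$ with the inverse limit, the cofinality of the open subgroups of $P$, and the reducedness of the Laurent polynomial ring $k_i[\Z^r]$.
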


\begin{proof}
Part (a) is clear from the isomorphism $\widehat{k[Z]} \simeq \widehat{k[P \times \Z^r]} \otimes_k k[Q]$ as discussed previously. For part (b) we note that 
$P$ is a pro-$p$ open subgroup of $P \times \Z^r$ so by our general observations, notably Lemma \ref{indepz}, the algebra $\widehat{k_i[P \times \Z^r]}$ has topological nilreduction 
$k_i[(P \times \Z^r)/P]_\text{red}=k_i[\Z^r]$. 
\end{proof}

We now relate the center of $\Mod^{e_i}(G)$ to the block algebra $e_i \widehat{k[Z]}$. Although the latter is usually not a subalgebra of $\widehat{k[Z]}$ we observe that the composition 
$$
\Phi^{e_i}: e_i\widehat{k[Z]} \hookrightarrow \widehat{k[Z]} \overset{\Phi}{\longrightarrow} Z(\Mod(G)) \overset{\text{res}^{e_i}}{\longrightarrow} Z(\Mod^{e_i}(G))
$$
is nevertheless a $k$-algebra homomorphism. Obviously $\Phi^{e_i}$ preserves addition and multiplication, but the point is that it takes the identity element $e_i$ to the identity element of $Z(\Mod^{e_i}(G))$ exactly because $e_i$ acts as the identity on every object of the subcategory $\Mod^{e_i}(G)$ by its very definition. 

The following gives a supplement to \cite[Thm.~1.1]{AS23} when $Z$ is topologically finitely generated. 

\begin{prop}\label{indecmod}
Let $G$ be a locally pro-$p$ group with a topologically finitely generated center $Z$, and no proper open centralizers. Let $k$ be a field of characteristic $p$.
Then: 
\begin{itemize}
\item[(a)] For all $i \in I$ the map $\Phi^{e_i}$ is an isomorphism of $k$-algebras 
$$
e_i\widehat{k[Z]} \overset{\sim}{\longrightarrow} Z(\Mod^{e_i}(G));
$$
\item[(b)] $\Mod^{e_i}(G)$ is indecomposable, i.e. it is not equivalent to a product of two non-trivial abelian categories.
\end{itemize}
\end{prop}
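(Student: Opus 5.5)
For part (a) I will use the product decomposition (\ref{factormod}) to split the center. Since $\Mod(G)=\prod_{i\in I}\Mod^{e_i}(G)$ is a finite product of full subcategories with no nonzero morphisms between different factors, an element of $Z(\Mod(G))$ is just a tuple of elements of the $Z(\Mod^{e_i}(G))$. In other words, the restriction maps induce a ring isomorphism
$$
Z(\Mod(G))\overset{\sim}{\longrightarrow}\prod_{i\in I} Z(\Mod^{e_i}(G)).
$$
Under this identification, $\Phi(e_i)$ is the central idempotent that acts as the identity on $\Mod^{e_i}(G)$ and as zero on the other factors. So $\Phi(e_i)Z(\Mod(G))$ corresponds to the $i$-th factor $Z(\Mod^{e_i}(G))$.

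By \cite[Thm.~1.1]{AS23}, $\Phi$ is an isomorphism. It therefore carries the ideal decomposition $\widehat{k[Z]}=\bigoplus_{i} e_i\widehat{k[Z]}$ onto $\bigoplus_i \Phi(e_i)Z(\Mod(G))$. Composing with the identification above, the restriction of $\Phi$ to $e_i\widehat{k[Z]}$ followed by $\text{res}^{e_i}$ is then a bijection onto $Z(\Mod^{e_i}(G))$. Concretely, injectivity holds because if $x=e_ix$ acts as zero on $\Mod^{e_i}(G)$, it also acts as zero on every other block (where $e_i$ acts as $0$), hence $\Phi(x)=0$ and $x=0$. Surjectivity holds because any $t\in Z(\Mod^{e_i}(G))$ extends by zero on the other blocks to an element $\Phi(y)$ of $Z(\Mod(G))$, and then $e_iy$ maps to $t$. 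The remark before the proposition already gives that $\Phi^{e_i}$ is unital, so it is a $k$-algebra isomorphism.

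For part (b) I will use the following standard fact: if an abelian category $\mathcal{M}$ is equivalent to a product $\mathcal{M}_1\times\mathcal{M}_2$ of two non-trivial abelian categories, then the identity of $\mathcal{M}_1$ together with zero on $\mathcal{M}_2$ defines a central idempotent of $Z(\mathcal{M})$ different from $0$ and $1$. Hence it suffices to show that $Z(\Mod^{e_i}(G))\simeq e_i\widehat{k[Z]}\simeq \widehat{k_i[P\times\Z^r]}$ (by (a) and Lemma \ref{idemp}(a)) has only the trivial idempotents. This is exactly the observation made earlier: $\widehat{k_i[P\times\Z^r]}$ embeds into the local ring $k_i(\Z^r)[\![P]\!]$, which has no idempotents other than $0$ and $1$. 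That earlier argument was written with coefficients $k$, but it applies verbatim with $k_i$ in place of $k$.

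The only step needing some care is checking that the equivalence $\mathcal{M}\simeq\mathcal{M}_1\times\mathcal{M}_2$ transports centers, i.e. that the center is invariant under equivalence of categories. This is routine. The main obstacle is really just bookkeeping, namely making sure the identification of $e_i\widehat{k[Z]}$ with the completed group algebra over $k_i$ is compatible with the algebra structure. Lemma \ref{idemp}(a) provides exactly this compatibility.
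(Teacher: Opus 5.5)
Your proposal is correct and follows essentially the paper's own argument: injectivity and surjectivity of $\Phi^{e_i}$ come from \cite[Thm.~1.1]{AS23} together with the block decomposition $Z(\Mod(G))\simeq\prod_{j\in I}Z(\Mod^{e_j}(G))$. Your extension-by-zero is just the inverse of the paper's surjective projection $\text{res}^{e_i}$. Part (b) is the same idempotent argument, using that $\widehat{k_i[P\times\Z^r]}$ embeds in the local ring $k_i(\Z^r)[\![P]\!]$.
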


\begin{proof}
Let $s \in \widehat{k[Z]}$ and expand it as a finite sum $s=\sum_{j \in I} s_j$ with $s_j=e_j s \in e_j\widehat{k[Z]}$.
Similarly let $V=\bigoplus_{j \in I} V_j$ be an object of $\Mod(G)$ decomposed into $V_j=e_j V \in \Mod^{e_j}(G)$. Correspondingly 
$\Phi_V(s)=\bigoplus_{j \in I} \Phi_{V_j}(s)=\bigoplus_{j \in I} \Phi_{V_j}(s_j)$ since $e_{j'}$ annihilates $V_j$ for $j'\neq j$. In particular 
$\Phi_V(s_i)$ is the extension of $\Phi_{V_i}(s_i)$ by zero endomorphisms on the other direct summands. Thus $\Phi^{e_i}(s_i)=0$ implies $\Phi_V(s_i)=0$ for all $V$, and therefore $s_i=0$ by \cite[Thm.~1.1]{AS23}, so $\Phi^{e_i}$ is injective. 

For the surjectivity of $\Phi^{e_i}$ first note that $\text{res}^{e_i}$ is surjective; indeed it is the projection 
$$
Z(\Mod(G)) \overset{\sim}{\longrightarrow} \prod_{j \in I} Z(\Mod^{e_j}(G)) \twoheadrightarrow Z(\Mod^{e_i}(G)).
$$
Starting with $t=(t_W)_{W\in \Mod^{e_i}(G)}$ in $Z(\Mod^{e_i}(G))$ we may then express it as $t=(\text{res}^{e_i} \circ \Phi)(s)$ for some 
$s \in \widehat{k[Z]}$ again appealing to \cite[Thm.~1.1]{AS23}. This means $t_W=\Phi_W(s)=\Phi_W(s_i)$ in the notation of the previous paragraph, and we can therefore replace $s$ by $s_i$ and get the desired preimage of $t$ in $e_i\widehat{k[Z]}$. This settles part (a). 

Part (b) is now an immediate application of Lemma \ref{idemp}, for suppose there is an equivalence of abelian categories
$\Mod^{e_i}(G) \simeq \mathcal{A} \times \mathcal{B}$. Then its center is isomorphic to the product of rings $Z(\mathcal{A}) \times Z(\mathcal{B})$. If 
$\mathcal{A}$ and $\mathcal{B}$ are both non-trivial their centers contain the zero-tuple and the identity-tuple, and therefore $Z(\mathcal{A})$ and $Z(\mathcal{B})$ are nonzero. Thus 
$(1,0)$ and $(0,1)$ correspond to non-trivial idempotents of $Z(\Mod^{e_i}(G))$, but the latter is isomorphic to $\widehat{k_i[P \times \Z^r]}$ which does not have any idempotents other than $0$ and $1$.
\end{proof}

Informally part (b) of Proposition \ref{indecmod} says that the factorization (\ref{factormod}) is optimal. Our next goal is establish the analogue in the derived setting. 


\subsection{Blocks in the derived setting}

Since $\Mod^{e_i}(G)$ is a Serre subcategory the following defines a strictly full triangulated subcategory of the derived category:
$$
D^{e_i}(G):=D_{\Mod^{e_i}(G)}(G)=\{V^\bb: \text{$e_i$ acts as the identity $h^*(V^\bb)$}\}.
$$
In fact it is thick and even localizing. Clearly any complex $V^\bb$ decomposes as a direct sum of subcomplexes $V^\bb=\bigoplus_{i \in I} e_i V^\bb$, and moreover we have $\Hom_{D(G)}(e_i V^\bb, e_{i'}V'^\bb)=0$ for $i \neq i'$ (to see this take $V'^\bb$ to be homotopically injective). In other words we have $D^{e_i}(G) \simeq D(\Mod^{e_i}(G))$ as well as an analogue of (\ref{factormod})
for the derived category, i.e.
\begin{equation}\label{factorder}
D(G)=\prod_{i \in I} D^{e_i}(G).
\end{equation}
The right-hand side is the product category with suspension and distinguished triangles defined componentwise. This leads to the following analogue of Proposition \ref{indecmod} in the derived setting, but only for $p$-adic Lie groups. 

\begin{prop}\label{indecder}
Let $G$ be a $p$-adic Lie group with a topologically finitely generated center $Z$, and no proper open centralizers. Let $k$ be a field of characteristic $p$, and
fix an $i \in I$. 
Then: 
\begin{itemize}
\item[(a)] $Z^0(D^{e_i}(G)^c)_\text{lred} \overset{\sim}{\longrightarrow} k_i[\Z^r]$;
\item[(b)] $D^{e_i}(G)$ is generated by $D^{e_i}(G)^c$ as a localizing subcategory;
\item[(c)] $D^{e_i}(G)$ is indecomposable, i.e. it is not equivalent to a product of two non-trivial triangulated categories.
\end{itemize}
\end{prop}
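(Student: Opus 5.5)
Since (\ref{factorder}) is a product decomposition of triangulated categories, and compactness is tested componentwise (coproducts and $\Hom$ are computed componentwise), we get $D(G)^c=\prod_{j\in I}D^{e_j}(G)^c$. Hence
$$
Z^0(D(G)^c)=\prod_{j\in I}Z^0(D^{e_j}(G)^c),
$$
and a tuple is locally nilpotent if and only if each component is. This gives $Z^0(D(G)^c)_\text{lred}=\prod_j Z^0(D^{e_j}(G)^c)_\text{lred}$.

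For (a), we start from Corollary \ref{maincor} combined with Lemma \ref{indepz}, which gives $Z^0(D(G)^c)_\text{lred}\simeq \widehat{k[Z]}_\text{red}\simeq\prod_j k_j[\Z^r]$ (Lemma \ref{idempbij}). The isomorphism is induced by $c\mapsto (s\circ\Phi)(c)$, so it respects the idempotents $e_j\in k[Q]\subset\widehat{k[Z]}$. The element $(s\circ\Phi)(e_j)$ acts as the identity on $D^{e_j}(G)$ and as zero on the other factors, since $s$ acts degreewise on complexes and $e_j$ acts on each term $V^n$. Multiplying by $e_i$ on both sides therefore identifies $Z^0(D^{e_i}(G)^c)_\text{lred}$ with $e_i\widehat{k[Z]}_\text{red}\simeq k_i[\Z^r]$, using Lemma \ref{idemp}.

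For (b), $D(G)$ is compactly generated by $\Bbb{X}_U$ with $U$ torsion-free (see \cite{Sch15}). Write $\Bbb{X}_U=\bigoplus_j e_j\Bbb{X}_U$, each summand being compact. Let $V^\bb\in D^{e_i}(G)$ with $\Hom(\Sigma^n e_i\Bbb{X}_U,V^\bb)=0$ for all $n$. Then $\Hom(\Sigma^n\Bbb{X}_U,V^\bb)=0$ as well, because the other summands map to zero into $D^{e_i}(G)$. Hence $V^\bb=0$. So $e_i\Bbb{X}_U$ is a compact generator of $D^{e_i}(G)$, and a compactly generated category equals the localizing subcategory generated by its compacts.

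For (c), suppose $D^{e_i}(G)\simeq\mathcal{A}\times\mathcal{B}$ as triangulated categories, with both factors nonzero. Then $D^{e_i}(G)^c\simeq\mathcal{A}^c\times\mathcal{B}^c$. By (b), if $\mathcal{A}^c=0$ then $\mathcal{A}$, being generated by its compacts (it is a localizing direct factor), is zero; so both $\mathcal{A}^c$ and $\mathcal{B}^c$ are nonzero. The pair $(1,0)$ is then an idempotent of $Z^0(D^{e_i}(G)^c)$. It is not locally nilpotent unless it is $0$, and $1-(1,0)$ likewise, so its image in the lred quotient is a nontrivial idempotent. But $k_i[\Z^r]$ is a Laurent polynomial ring over a field, hence a domain with only the idempotents $0,1$, a contradiction.

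The main obstacle I expect is the point in (c) about the image of $(1,0)$ being nontrivial. Since $(1,0)$ is an idempotent, it is locally nilpotent only if it vanishes on every compact object, i.e. only if $\mathcal{A}^c=0$, which we have excluded. The other delicate point is the compatibility of $s\circ\Phi$ with the block decomposition in (a).
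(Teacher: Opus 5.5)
Your proposal is correct and follows the paper's proof essentially step for step. You decompose $D(G)^c$ and $Z^0(-)_\text{lred}$ as products over $I$ and combine this with Corollary \ref{maincor} for (a); you use the compact summand $e_i\Bbb{X}_U$ of the generator $\Bbb{X}_U$ for (b); and for (c) you combine the absence of nontrivial idempotents in $k_i[\Z^r]$ with (b) to rule out a nontrivial splitting. Your explicit tracking of the idempotents $e_j$ under $c\mapsto(s\circ\Phi)(c)$ in (a) usefully fills in the matching of factors that the paper compresses into ``comparing this to the factorization''.
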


\begin{proof}
The factorization (\ref{factorder}) continues to hold for the subcategories of compact objects since $D(G)^c$ is thick. Taking $Z^0(-)$ respects the direct product (see \cite[Ex.~2.4, p.~446]
{KY11} for instance) and so does passing to lred. Therefore we have an isomorphism of $k$-algebras
$$
Z^0(D(G)^c)_\text{lred} \overset{\sim}{\longrightarrow} \prod_{j \in I} Z^0(D^{e_j}(G)^c)_\text{lred}. 
$$
Comparing this to the factorization $\widehat{k[Z]}_\text{red}\simeq \prod_{j \in I} k_j[\Z^r]$ and using Corollary \ref{maincor} proves (a).

For part (b) we recall that $D(G)$ is generated by $\Bbb{X}_U$ for any choice of a torsion-free pro-$p$ open subgroup $U \subset G$, and $\Bbb{X}_U$ belongs to $D(G)^c$. From this it is easy to see that $e_i \Bbb{X}_U$ belongs to $D^{e_i}(G)^c$ and the localizing subcategory $e_i \Bbb{X}_U$ generates is all of $D^{e_i}(G)$.

For part (c) suppose $D^{e_i}(G)\simeq \mathcal{S}\times \mathcal{T}$ for two triangulated categories $\mathcal{S}, \mathcal{T}$. Passing to compact objects we get an equivalence
$D^{e_i}(G)^c\simeq \mathcal{S}^c\times \mathcal{T}^c$ (note that $\mathcal{S}, \mathcal{T}$ necessarily admit small coproducts, since $D^{e_i}(G)$ does, and it therefore makes sense to talk about the compact objects of $\mathcal{S}, \mathcal{T}$). We deduce that $Z^0(D^{e_i}(G)^c)_\text{lred}\simeq k_i[\Z^r]$ is isomorphic to 
$Z^0(\mathcal{S}^c)_\text{lred}\times Z^0(\mathcal{T}^c)_\text{lred}$ (see \cite[Ex.~2.4, p.~446]
{KY11} again). We conclude that $Z^0(\mathcal{T}^c)_\text{lred}=\{0\}$ by interchanging $\mathcal{S}, \mathcal{T}$ if necessary. In particular the identity-tuple 
$(\text{Id}_{V^\bb})_{V^\bb \in \mathcal{T}^c}$ is locally nilpotent, which amounts to having $\mathcal{T}^c=\{\text{zero objects}\}$. However, $\mathcal{T}$ is generated by 
$\mathcal{T}^c$ by part (b), so in fact  $\mathcal{T}=\{\text{zero objects}\}$. In other words $\mathcal{T}$ is a 'trivial' triangulated category. 
\end{proof}

\begin{exmps}
In the context of Proposition \ref{indecder}, $D(G)$ is indecomposable if $Z(G)$ is trivial -- or more generally if the pro-order of $Z(G)$ is a $p$-power (that is, when the maximal compact subgroup of $Z(G)$ is pro-$p$). 

If $G=\SL_2(\frak{F})$ for a finite extension $\frak{F}/\Q_p$ with $p>2$, we have $Z(G)=\{\pm I\}=Q$. Here $k[Q]$ has the two primitive idempotents 
$e_+$ and $e_-$, and $D(G)=D^{e_+}(G) \times D^{e_-}(G)$ where $D^{e_{\pm}}(G)$ is the indecomposable subcategory of complexes $V^\bb$ such that $-I$ acts by $\pm 1$ on 
$h^*(V^\bb)$ respectively. (In the $p=2$ case $D(G)$ is indecomposable.)

If $G=\GL_2(\frak{F})$ for a finite extension $\frak{F}/\Q_p$, with no restriction on $p$, we have $P=1+\m_\frak{F}$ and $Q=\mu_{q-1}(\frak{F})$ where $q$ is the size of the residue field, which we will denote by $\F_q$. For simplicity we assume $k$ is large enough to admit a field embedding $\sigma: \F_q \hookrightarrow k$. Then there are exactly $q-1$ characters $\chi_i:Q \rightarrow k^\times$ given by $Q\simeq \F_q^\times \overset{\sigma^i}{\longrightarrow} k^\times$ with $i=0,1,\ldots,q-2$. Correspondingly $D(G)$ factors as a direct product of $q-1$ indecomposable subcategories
$$
D(G)=D^{e_{\chi_0}}(G) \times \cdots \times D^{e_{\chi_{q-2}}}(G).
$$
Here $D^{e_{\chi_i}}(G)$ in particular contains all $V^\bb$ for which the central subgroup $\frak{O}_\frak{F}^\times$ acts on $h^*(V^\bb)$ via the inflation of the character $\chi_i$. 
\end{exmps}


\section{A comparison with the image of the characteristic homomorphism}\label{hoch}

In this section we again assume that $G$ is a locally profinite group containing a pro-$p$ open subgroup $U$, and that $G$ contains no proper open centralizer. We repeat that in this generality we so far have the maps
\begin{equation}\label{diag:Bod}
\begin{tikzcd}
k[Z(G)/Z(G)\cap U]_\text{red} \arrow[r, "\simeq"] & Z^0(\text{Thick}_{D(G)}(\Bbb{X}_U))_\text{lred} \arrow[d, hook, "t \mapsto t_{\Bbb{X}_U}"] \\
& Z^0(\Ext_{\Mod(G)}^*(\Bbb{X}_U, \Bbb{X}_U))_\text{red}
\end{tikzcd}
\end{equation}
with the horizontal isomorphism being given by Theorem \ref{mainr}, and the perpendicular evaluation map being injective as explained in the paragraph after Lemma \ref{nilp}. In the following we will determine the image of the composed map. 

We have the characteristic map out of the zeroth Hochschild cohomology of the differential graded Hecke algebra $\HH_U^\bb$, whose definition we will recall in Section \ref{heckedga} below, 
$$
\chi_U: HH^0(\HH_U^\bb) \longrightarrow Z^0(\Ext_{\Mod(G)}^*(\Bbb{X}_U, \Bbb{X}_U)).
$$
It is defined by sending a morphism $\xi: \HH_U^\bb \rightarrow \HH_U^\bb$ in the derived category of dg bimodules to the class in $h^0(\HH_U^\bb)$ represented by
the cocycle $\xi(1)$. We will give more details below. 

In this section we show that the image of $\chi_U$ coincides with the image of the composition in (\ref{diag:Bod}) up to nilpotent elements. 

\begin{thm}\label{imchar}
With the above assumptions on $G$ the composed map in (\ref{diag:Bod}) induces an isomorphism of $k$-algebras
$$
k[Z(G)/Z(G)\cap U]_\text{red} \overset{\sim}{\longrightarrow} \im(\chi_U)_\text{red}
$$
induced by $Z(G) \ni c \mapsto \tau_c$.
\end{thm}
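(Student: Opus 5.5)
The plan is to prove two inclusions inside the commutative algebra $Z^0(\Ext_{\Mod(G)}^*(\Bbb{X}_U,\Bbb{X}_U))$, and then to use the diagram (\ref{diag:Bod}) for injectivity. The two inclusions are: (i) each $\tau_c$, $c\in Z(G)$, lies in $\im(\chi_U)$; (ii) every element of $\im(\chi_U)$ agrees with an element of the form $\sum_c a_c\tau_c$ modulo a nilpotent element. Assuming (i) and (ii), the composed map in (\ref{diag:Bod}) is injective on $k[Z(G)/Z(G)\cap U]_\text{red}$ by Theorem \ref{mainr} together with the injectivity of evaluation at $\Bbb{X}_U$ after Lemma \ref{nilp}. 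By (i) it lands in $\im(\chi_U)$. By (ii) it hits $\im(\chi_U)$ modulo nilpotents. Since $\im(\chi_U)$ is commutative, its nilpotents form an ideal, so this gives the isomorphism onto $\im(\chi_U)_\text{red}$.

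For (i), I will use the description of $\HH_U^\bb$ from \cite{Sch15} as the endomorphism dg algebra $\End^\bb(\mathcal{I}^\bb)^\op$ of a fixed injective resolution $\mathcal{I}^\bb$ of $\Bbb{X}_U$ in $\Mod(G)$. A central element $c\in Z(G)$ acts on every smooth representation $G$-equivariantly and naturally in the representation. Hence $c$ acts on $\mathcal{I}^\bb$ by a degree-zero cocycle $c_{\mathcal{I}}$ which commutes \emph{strictly} with every homogeneous element of $\End^\bb(\mathcal{I}^\bb)$, with no Koszul sign since the degree is $0$. Multiplication by $c_{\mathcal{I}}$ is therefore an endomorphism of $\HH_U^\bb$ as a dg bimodule. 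Its class $\xi_c\in HH^0(\HH_U^\bb)$ satisfies $\chi_U(\xi_c)=[c_{\mathcal{I}}]$. Under $h^0(\HH_U^\bb)=\End(\Bbb{X}_U)^\op=\HH_U$ this class is the action of $c$ on $\Bbb{X}_U$, which is the Hecke operator $\text{char}_{UcU}=\tau_c$.

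For (ii), the strategy is to factor $\chi_U$ through the graded center of the thick subcategory. For any dg algebra $A$ there is the standard map $HH^0(A)\to Z^0(D(A))$, $\xi\mapsto (\xi\otimes^L_A\text{Id}_M)_M$, and evaluating it at $M=A$ recovers $\chi$. I will restrict this map to $\text{Thick}_{D(A)}(A)=D(A)^c$. By \cite{Sch15}, the functor $R\Hom(\Bbb{X}_U,-)$ gives an equivalence $\text{Thick}_{D(G)}(\Bbb{X}_U)\simeq \text{Thick}_{D(\HH_U^\bb)}(\HH_U^\bb)$ sending $\Bbb{X}_U$ to $\HH_U^\bb$; this holds in general on thick envelopes by the usual dévissage, even without $\Bbb{X}_U$ generating $D(G)$. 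Consequently every element of $\im(\chi_U)$ is of the form $t_{\Bbb{X}_U}$ for some $t\in Z^0(\text{Thick}_{D(G)}(\Bbb{X}_U))$. By Theorem \ref{mainr}, $t=(s\circ\Phi)(x)+n$ with $x\in k[Z(G)]$ and $n$ locally nilpotent. Evaluating at $\Bbb{X}_U$ gives $t_{\Bbb{X}_U}=\sum a_c\tau_c+n_{\Bbb{X}_U}$, where $n_{\Bbb{X}_U}$ is nilpotent. Here I use the diagram (\ref{inidiag}) argument, which shows that evaluating $(s\circ\Phi)(c)$ at $\Bbb{X}_U$ gives $\tau_c$.

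The main obstacle is the compatibility in (ii). One has to check carefully that the Hochschild action commutes with the Schneider equivalence, in particular that the signs and op-conventions match so that evaluation at $\HH_U^\bb$ really is $\chi_U$. One also has to confirm that $\xi\otimes^L\text{Id}$ commutes with $\Sigma$, so that it defines an element of $Z^0$. I would first handle these on the level of homotopically projective dg modules, where $\xi\otimes\text{Id}$ is computed directly.
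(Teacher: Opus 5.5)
Your proposal is correct, and your step (i) is exactly the paper's Lemma \ref{constraint}. The divergence is in the surjectivity step.

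The paper proves Lemma \ref{commdiag}, namely that the inner rectangle of (\ref{diag}) commutes: $\frak{B}\circ\frak{c}\circ\varepsilon\circ\delta=s$ on all of $\text{Thick}_{D(G)}(\Bbb{X}_U)$. This requires unwinding the counit $\II^\bb\otimes_{\HH_U^\bb}{\bf p}q_\HH\Hom^\bb(\II^\bb,V^\bb)\to V^\bb$. The paper then pulls the locally nilpotent error term back to $D(\HH_U^\bb)^c$ via $\frak{B}^{-1}$ and evaluates at $\HH_U^\bb$, using the outer rectangle (\ref{prediag}).

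You instead stay on the group side and evaluate at $\Bbb{X}_U$. There $(s\circ\Phi)(c)_{\Bbb{X}_U}=\Phi(c)_{\Bbb{X}_U}=\tau_c$ is immediate from $\pi\circ s=\text{Id}$. So the only compatibility you need is $\frak{B}(\alpha)_{\Bbb{X}_U}\leftrightarrow\alpha_{\HH_U^\bb}$ at the single generator.

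That compatibility is lighter than the obstacle you anticipate:
\begin{itemize}
\item $\frak{h}(\II^\bb)=\Hom^\bb(\II^\bb,\II^\bb)$ is literally the left regular module $\HH_U^\bb$.
\item $\frak{h}$ sends an endomorphism $f$ of $\II^\bb$ to $\phi\mapsto f\circ\phi$, which is right multiplication by $f$ in $\HH_U^\bb=\End^\bb(\II^\bb)^\op$.
\item $\frak{c}(\xi)_{\HH_U^\bb}$ is right multiplication by $\xi(1)$.
\end{itemize}
Full faithfulness then gives $\frak{B}(\frak{c}(\xi))_{\II^\bb}=[\xi(1)]$. This is precisely $\chi_U(\xi)$ under the identification $\End_{\Mod(G)}(\Bbb{X}_U)^\op\cong h^0(\HH_U^\bb)$, $\tau\mapsto\widetilde{\tau}$, used in the paper.

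Your route is therefore shorter and bypasses Lemma \ref{commdiag} entirely. What the paper's route buys is a stronger structural fact: the section $s$, restricted to the thick subcategory, is realized on every perfect dg module by the characteristic action of the Hochschild class $\varepsilon(\delta(t))$, not merely at the generator.

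For injectivity you do not need Theorem \ref{mainr}. The map $\gamma$ is already injective, because the $\tau_c=\text{char}_{UcU}=\text{char}_{cU}$ for distinct cosets of $Z(G)\cap U$ are linearly independent in $\HH_U$. An injective ring homomorphism remains injective on nilreductions, which is how the paper argues.
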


We will also show that $\chi_U$ is surjective when $Z^0(\Ext_{\Mod(G)}^*(\Bbb{X}_U, \Bbb{X}_U))$ is spanned by all the $\tau_c=\text{char}_{IcI}$ (as is the case in the setup of \cite{Bod22} for instance).  


\subsection{Differential graded Hecke algebras}\label{heckedga}

The results in this subsection apply in the most general case. Thus $G$ now denotes an arbitrary locally profinite group, $k$ is any field, and $D(G)=D(\Mod(G))$ is the derived category of smooth $G$-representations on $k$-vector spaces. Once and for all we fix a compact open subgroup $U \subset G$ and consider the compactly induced representation 
$\Bbb{X}_U=\ind_U^G(1)$ as above. 

Note that $\Mod(G)$ is a Grothendieck category (the proof of part (iv) of \cite[Lem.~1]{Sch15} works in our generality). In particular $\Mod(G)$ has enough injective objects, and we choose an injective resolution $\Bbb{X}_U\rightarrow \II^\bb$. As in \cite[Sect.~3]{Sch15} this gives rise to a differential graded $k$-algebra
$\HH_U^\bb=\End_{\Mod(G)}^\bb(\II^\bb)^\op$ with cohomology algebra $h^*(\HH_U^\bb)$ isomorphic to $\Ext_{\Mod(G)}^*(\Bbb{X}_U, \Bbb{X}_U)^\op$.

We relate $D(G)$ to the derived category of dg modules $D(\HH_U^\bb)$ via the usual pair of adjoint functors
$$
\begin{tikzcd}
\frak{t}: D(\mathcal{H}_U^\bb) \arrow[bend left=35]{r}[name=F]{} & D(G): \frak{h}\arrow[bend left=35]{l}[name=U]{}
\end{tikzcd}
$$
defined as follows. To define $\frak{h}$ we first pick a fully faithful right adjoint ${\bf{i}}$ of the localization functor $q_G: K(G) \rightarrow D(G)$ with essential image $K_\text{inj}(G)$ 
the full subcategory of homotopically injective complexes. (Here $K(G)$ denotes the homotopy category of $\Mod(G)$.) Then for any object $V^\bb$ of $D(G)$ we let 
$$
\frak{h}(V^\bb):=q_\HH \big(\Hom_{\Mod(G)}^\bb(\II^\bb, {\bf{i}}V^\bb)\big)
$$
where $q_\HH: K(\mathcal{H}_U^\bb) \rightarrow D(\mathcal{H}_U^\bb)$ is the localization functor on the dg side. The left adjoint functor $\frak{t}$ is given by analogous considerations. We now pick a fully faithful left adjoint ${\bf{p}}$ of the localization functor $q_\HH$ whose essential image $K_\text{pro}(\mathcal{H}_U^\bb)$ 
is the full subcategory of homotopically projective dg modules, and let 
$$
\frak{t}(M^\bb):=q_G\big(\II^\bb \otimes_{\mathcal{H}_U^\bb} {\bf{p}}M^\bb \big)
$$
for any dg $\mathcal{H}_U^\bb$-module $M^\bb$. 

In the setting of \cite[Thm.~9]{Sch15} (where $G$ is a $p$-adic Lie group, $k$ is a field of characteristic $p$, and $U$ is a torsion-free pro-$p$ open subgroup) the two functors
$\frak{h}$ and $\frak{t}$ are mutually quasi-inverse equivalences of triangulated categories. In particular their restrictions identify the subcategory 
$\text{Thick}_{D(G)}(\Bbb{X}_U)$ with the category of perfect dg-modules $D(\mathcal{H}_U^\bb)^c=\text{Thick}_{D(\mathcal{H}_U^\bb)}(\mathcal{H}_U^\bb)$. The point of this subsection is the observation that the latter equivalence of thick {\it{subcategories}} continues to hold in the generality of a locally profinite group $G$ (and without restrictions on $U$). 
This is of course not original, and we only include it here for convenience. 

\begin{prop}\label{dgequiv}
The restrictions of $\frak{h}$ and $\frak{t}$ define mutually quasi-inverse equivalences of triangulated categories
$$
\text{Thick}_{D(G)}(\Bbb{X}_U) \overset{\sim}{\longrightarrow} D(\mathcal{H}_U^\bb)^c.
$$
\end{prop}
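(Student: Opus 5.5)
The plan is the standard "adjunction restricted to the thick envelopes of the generators" argument. First I will check that $\frak{h}(\Bbb{X}_U)\cong \HH_U^\bb$ in $D(\HH_U^\bb)$. Since $\Bbb{X}_U\rightarrow \II^\bb$ is an injective resolution of a bounded-below object, $\II^\bb$ is homotopically injective, so we may take ${\bf{i}}\Bbb{X}_U=\II^\bb$. Then
$$
\frak{h}(\Bbb{X}_U)=q_\HH\big(\Hom^\bb_{\Mod(G)}(\II^\bb,\II^\bb)\big)=q_\HH(\HH_U^\bb),
$$
where the right dg $\HH_U^\bb$-module structure is the regular one, precisely because $\HH_U^\bb$ was defined as $\End^\bb(\II^\bb)^\op$. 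Dually, $\HH_U^\bb$ is homotopically projective over itself, so ${\bf{p}}\HH_U^\bb=\HH_U^\bb$ and $\frak{t}(\HH_U^\bb)=q_G(\II^\bb\otimes_{\HH_U^\bb}\HH_U^\bb)=q_G(\II^\bb)\cong \Bbb{X}_U$.

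Next I will show that the unit $\eta:\text{Id}\rightarrow \frak{h}\frak{t}$ is an isomorphism on $\HH_U^\bb$ and the counit $\varepsilon:\frak{t}\frak{h}\rightarrow \text{Id}$ is an isomorphism on $\Bbb{X}_U$. The cleanest route is to check that $\frak{h}$ is fully faithful on the single object $\Bbb{X}_U$ in all shifts, i.e. that
$$
\Hom_{D(G)}(\Bbb{X}_U,\Sigma^n\Bbb{X}_U)\rightarrow \Hom_{D(\HH_U^\bb)}(\HH_U^\bb,\Sigma^n\HH_U^\bb)
$$
is bijective. Both sides identify with $h^n(\HH_U^\bb)$: the left via $\Hom_{K(G)}(\II^\bb,\Sigma^n\II^\bb)=h^n\End^\bb(\II^\bb)$, and the right via $\Hom_{D(\HH)}(\HH,M)=h^0(M)$. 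The map induced by $\frak{h}$ is then the identity after these identifications, up to the op-convention, which I would verify by writing both sides out. Combined with the triangle identities, this implies that the unit at $\HH_U^\bb$ and the counit at $\Bbb{X}_U$ are isomorphisms; alternatively, one can check the counit directly as the canonical isomorphism $\II^\bb\otimes_\HH\HH\rightarrow\II^\bb$.

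Finally, $\frak{h}$ and $\frak{t}$ are triangulated functors, so the full subcategories $\{V:\varepsilon_V \text{ iso}\}\subset D(G)$ and $\{M:\eta_M \text{ iso}\}\subset D(\HH_U^\bb)$ are thick, being closed under shifts, cones (five lemma) and summands. They contain $\Bbb{X}_U$ and $\HH_U^\bb$ respectively, hence contain the two thick envelopes. Moreover $\frak{h}$ maps $\text{Thick}_{D(G)}(\Bbb{X}_U)$ into $\text{Thick}(\HH_U^\bb)=D(\HH_U^\bb)^c$, since the preimage of a thick subcategory under a triangulated functor is thick and contains $\Bbb{X}_U$; likewise $\frak{t}$ maps $D(\HH_U^\bb)^c$ into $\text{Thick}_{D(G)}(\Bbb{X}_U)$. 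The identification $D(\HH_U^\bb)^c=\text{Thick}(\HH_U^\bb)$ is the standard fact for dg algebras, and I will quote it. This yields the mutually quasi-inverse equivalences.

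I expect the main obstacle to be the bookkeeping in the second step: verifying that the map on Hom spaces induced by $\frak{h}$ really matches the canonical identifications, and in particular tracking the $\op$ and the sign conventions. Everything else is formal.
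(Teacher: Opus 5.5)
Your proposal is correct and is essentially the paper's argument: the loci where the counit and unit are invertible are thick, they contain $\Bbb{X}_U\simeq\II^\bb$ and $\HH_U^\bb$ respectively, and $\text{Thick}(\HH_U^\bb)=D(\HH_U^\bb)^c$. Your additional checks fill in points the paper leaves implicit: that the unit and counit themselves are invertible on the generators (the paper only records abstract isomorphisms $\frak{h}(\II^\bb)\simeq\HH_U^\bb$ and $\frak{t}(\HH_U^\bb)\simeq\II^\bb$), and that $\frak{h},\frak{t}$ carry one thick envelope into the other. One harmless slip: the dg modules here are \emph{left} $\HH_U^\bb$-modules, with $\II^\bb$ a right module as in $\II^\bb\otimes_{\HH_U^\bb}{\bf{p}}M^\bb$, so $\Hom^\bb_{\Mod(G)}(\II^\bb,\II^\bb)$ with the precomposition action is the left regular module, not the right one.
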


\begin{proof}
Note that the collections of objects 
$$
\{V^\bb: (\frak{t} \circ \frak{h})(V^\bb) \overset{\sim}{\longrightarrow} V^\bb\} \:\:\: \text{and} \:\:\:
\{M^\bb: M^\bb \overset{\sim}{\longrightarrow} (\frak{h}\circ \frak{t})(M^\bb)\}
$$
give thick subcategories of $D(G)$ and $D(\mathcal{H}_U^\bb)$ respectively since $\frak{t}$ and $\frak{h}$ both preserve \emph{finite} coproducts. 
As $\frak{h}(\II^\bb)\simeq \mathcal{H}_U^\bb$
and $\frak{t}(\mathcal{H}_U^\bb)\simeq \II^\bb$, the former subcategory contains $\II^\bb$, and the latter contains $\mathcal{H}_U^\bb$. Since the thick envelope of 
$\mathcal{H}_U^\bb$ is the subcategory of compact objects $D(\mathcal{H}_U^\bb)^c$ this shows that the restrictions of $\frak{h}$ and $\frak{t}$ yield an
equivalence of categories $\text{Thick}_{D(G)}(\Bbb{X}_U)=\text{Thick}_{D(G)}(\II^\bb) \overset{\sim}{\longrightarrow} D(\mathcal{H}_U^\bb)^c$.
\end{proof}

We emphasize that the previous argument does not require $\Bbb{X}_U$ to be in $D(G)^c$. 


\subsection{The center of a differential graded algebra}
 
We fix an arbitrary differential graded algebra over a field $k$, that is a cochain complex $A$ of $k$-vector spaces $\cdots \rightarrow A^r\overset{d^r}{\longrightarrow} A^{r+1} \rightarrow \cdots$ with $k$-bilinear maps $A^r \times A^s \rightarrow A^{r+s}$ such that $\bigoplus_{r \in \Z}A^r$ is an associative 
 $k$-algebra, and the Leibniz rule is satisfied. The algebra has a neutral element $1 \in A^0$ and $d(1)=0$. We denote its cohomology algebra by $h^*(A)$.

\begin{defn}
The \emph{naive center} of $A$ is the algebra $Z(A)=\bigoplus_{r \in \Z} Z^r(A)$ where the $r^\text{th}$ graded piece is the subspace
$$
Z^r(A)=\{a\in A^r: \text{$ab=(-1)^{rs}ba$ for all $b \in A^s$ and all $s$}\}.
$$
\end{defn}
 
\begin{rem}
The Leibniz rule shows that we have a subcomplex $\cdots \rightarrow Z^r(A)\overset{d^r}{\longrightarrow} Z^{r+1}(A) \rightarrow \cdots$ and thus $Z(A)$ 
is a graded-commutative differential graded $k$-subalgebra of $A$. We call this construction the \emph{naive} (or dg) center since $A \mapsto Z(A)$ does not respect quasi-isomorphisms in general. It will have more of an auxiliary role in what follows, and we will almost exclusively only consider the cohomology of $Z(A)$ in degree zero. 
 \end{rem}
 


 \begin{lem}\label{aM}
For every left differential graded $A$-module $M$, sending a cocycle $a \in Z^0(A)$ to the map $a_M: x \mapsto ax$ defines a homomorphism of $k$-algebras
$$
h^0(Z(A))\longrightarrow Z^0(\Hom_{D(A)}^*(M, M)).
$$
The map $a \mapsto (a_M)_{M \in D(A)}$ gives a homomorphism $h^0(Z(A)) \longrightarrow Z^0(D(A))$. 
\end{lem}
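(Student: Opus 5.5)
The plan is to check, in order: that $a_M$ is a morphism of dg modules, that it depends only on the cohomology class of $a$, that it descends to $D(A)$, and then centrality, multiplicativity and naturality.

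First, $a_M$ is a chain map of degree zero. Since $a\in Z^0(A)$ is a cocycle, the module Leibniz rule gives $d(ax)=d(a)x+ax=ax$ (the sign is trivial because $\deg a=0$), so $a_M$ commutes with the differential. It is $A$-linear because $a$ lies in the naive center. Indeed, for $b\in A^s$ we have $a(bx)=(ab)x=(ba)x=b(ax)$, the sign $(-1)^{0\cdot s}$ being trivial. Hence $a_M$ is a closed degree zero element of $\Hom^\bullet_A(M,M)$.

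Next, suppose $a=d(c)$ with $c\in Z^{-1}(A)$. Then $a_M=d(c_M)$ in the Hom complex, where $c_M:x\mapsto cx$. One must check that $c_M$ is $A$-linear in the graded sense, i.e. $c_M(bx)=(-1)^{-s}b\,c_M(x)$. This holds because $cb=(-1)^{-s}bc$ by the definition of $Z^{-1}(A)$. The Leibniz rule then gives $d(c_M)=d(c)_M$, so $a_M$ is null-homotopic. We therefore get a well-defined map $h^0(Z(A))\to \Hom_{K(A)}(M,M)\to\Hom_{D(A)}(M,M)$. It is visibly a $k$-algebra homomorphism, since $(aa')_M=a_M\circ a'_M$ and $1_M=\mathrm{Id}$.

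Centrality and naturality come next. To show that $a_M$ lies in the graded center of $\Hom^*_{D(A)}(M,M)$, and for the second assertion, it is cleanest to prove naturality directly in $D(A)$: for any morphism $f:M\to\Sigma^r N$ in $D(A)$ we need $\Sigma^r(a_N)\circ f=f\circ a_M$. In $K(A)$ this is immediate, because a dg-module map $g$ of degree $r$ satisfies $g(ax)=(-1)^{0\cdot r}a\,g(x)$. Here one uses that $\Sigma^r(a_N)$ acts as $a$ on $\Sigma^r N$, up to the standard sign conventions for the module structure on the shift, which are trivial for a degree zero central element. In $D(A)$ a morphism is a roof $M\xleftarrow{s}M'\xrightarrow{g}\Sigma^rN$ with $s$ a quasi-isomorphism. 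Since $a$ commutes with $s$ and with $g$ in $K(A)$, and $s$ is invertible in $D(A)$, naturality follows. Alternatively one can replace $M$ by a homotopically projective resolution, where $\Hom_{D(A)}=\Hom_{K(A)}$.

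Taking $N=M$ yields graded centrality, so the first assertion holds. The compatibility $a_{\Sigma M}=\Sigma(a_M)$, with sign $(-1)^0=1$, holds because the $A$-action on $\Sigma M$ is twisted by $(-1)^{\deg b}$, which is trivial on degree zero elements. Together with the naturality above, this shows $(a_M)_M\in Z^0(D(A))$, and the map is a $k$-algebra homomorphism. I expect the only real care needed to be the sign bookkeeping for degree $-1$ elements in the null-homotopy step and for the shift conventions.
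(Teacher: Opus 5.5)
Your proposal is correct, and it is the routine verification that the paper leaves to the reader: you show $a_M$ is a chain map and $A$-linear by centrality, make $a_M$ null-homotopic via $c_M$ when $a=d(c)$ with $c\in Z^{-1}(A)$, prove naturality in $K(A)$ and pass it to $D(A)$ through roofs, and check compatibility with $\Sigma$. The only blemish is a typo in your first Leibniz computation, which should read $d(ax)=d(a)x+a\,d(x)=a\,d(x)$.
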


\begin{proof}
This is a straightforward argument and we leave the details to the reader.
\end{proof}

For $M=A$, viewed as a left module over $A$, we in particular get the homomorphism 
\begin{equation}\label{edge1}
h^0(Z(A))\longrightarrow Z^0(h^*(A))
\end{equation}
induced by the inclusion $Z(A) \subseteq A$.


\subsection{The characteristic homomorphism}
  
A perhaps better candidate for the center of $A$ is its zeroth Hochschild cohomology algebra, which we will denote by 
$HH^0(A)$. (Perhaps it would be more precise to write $HH^0(A, A)$ but since we will exclusively be considering the bimodule $A$ we find the duplicate notation unnecessary.) Hence an element of $HH^0(A)$ is a morphism 
$\xi: A \rightarrow A$ in the derived category of dg bimodules $D(A \otimes_k A^\op)$. For $M$ in 
$D(A)$ there is an associated morphism $\xi_{M}: M \rightarrow M$ in $D(A)$ obtained by tensoring $\xi$ with $\text{Id}_{M}$. This is sometimes referred to as the characteristic action, and it gives a homomorphism of $k$-algebras
\begin{align*}
\frak{c}: HH^0(A) & \longrightarrow Z^0(D(A)) \\
 \xi & \longmapsto (\xi_{M})_{M \in D(A)}.
\end{align*}
When $M=A$, viewed as a left dg module, $\xi_{A}$ is the image of $\xi$ in $D(A)$ obtained by forgetting the right module structure. Thus $\xi_A$ corresponds to the class of $\xi(1)$ via the identification
$$
\Hom_{D(A)}(A, A)=
\Hom_{K(A)}(A, A)\simeq h^0(A).
$$
Altogether, composing $\frak{c}$ with evaluation at $A$ gives the characteristic homomorphism (in degree zero)
\begin{align*}
\chi: HH^0(A) & \longrightarrow Z^0(h^*(A)) \\
 \xi & \longmapsto [\xi(1)]
\end{align*}
We are mainly interested in the image of $\chi$, which in general can be smaller than $Z^0(h^*(A))$.



We note that the homomorphism (\ref{edge1}), which is induced by the inclusion $Z(A) \subseteq A$, factors as $\chi \circ \varepsilon$ with 
$$
\varepsilon: h^0(Z(A))\longrightarrow HH^0(A)
$$
defined as follows: as we have seen, left multiplication by a cocycle $a \in Z^0(A)$ gives a map $a_A: A \rightarrow A$ of dg bimodules. The resulting morphism in $D(A \otimes_k A^\op)$ gives an element of $HH^0(A)$ which only depends on the class $[a]$ in 
$h^0(Z(A))$. With this definition it is immediately verified that $\chi \circ \varepsilon$ coincides with the map (\ref{edge1}) induced by inclusion.


We summarize our findings in the commutative diagram below. 
$$
\begin{tikzcd}
h^0(Z(A)) \arrow[r, "(\ref{edge1})"] \arrow[d, "\varepsilon"]
& Z^0(h^*(A)) \\
HH^0(A) \arrow[r,"\frak{c}"] \arrow[ur, "\chi"]
& Z^0(D(A)). \arrow[u, "\text{eval}_A"]
\end{tikzcd}
$$

Next we will apply these constructions to the dg Hecke algebra $\HH_U^\bb$.


\subsection{Constraints on the central Hecke operators $\tau_c$}

We return to the dg algebra $\HH_U^\bb$, in the setting of Section \ref{heckedga}, and we aim for a better understanding of the image of the characteristic homomorphism 
$$
\chi_U: HH^0(\HH_U^\bb) \longrightarrow Z^0(\Ext_{\Mod(G)}^*(\Bbb{X}_U, \Bbb{X}_U)).
$$
As a first observation, $\im(\chi_U)$ contains the Hecke operator $\tau_c=\text{char}_{UcU}$ defined by a $c \in Z(G)$.

\begin{lem}\label{constraint}
For all $c \in Z(G)$ we have $\tau_c \in \im(\chi_U)$.
\end{lem}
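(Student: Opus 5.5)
The plan is to exhibit, for each $c \in Z(G)$, an explicit element of the naive center $h^0(Z(\HH_U^\bb))$ whose image under (\ref{edge1}) is $\tau_c$. Then $\varepsilon$ of it lies in $HH^0(\HH_U^\bb)$ and maps to $\tau_c$ under $\chi_U$, by the factorization $\chi\circ\varepsilon=(\ref{edge1})$ recorded in the commutative diagram above.

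Since $c$ is central, it acts on every smooth representation $V$ by the $G$-equivariant automorphism $v\mapsto cv$; this is precisely the value $\Phi_V(c)$ of the Bernstein center element attached to $c$. In particular $c$ acts on each term $\II^r$ of the injective resolution, and these actions commute with the differentials, so we obtain a degree-zero cocycle $c_{\II}\in\End^0_{\Mod(G)}(\II^\bb)$. Naturality of $\Phi(c)$ with respect to arbitrary $G$-morphisms $\II^s\to\II^{s+r}$ shows that $c_\II$ commutes with every homogeneous element of $\End^\bb_{\Mod(G)}(\II^\bb)$. Since $c_\II$ has degree $0$, there are no signs, and hence $c_\II\in Z^0(\HH_U^\bb)$ is a cocycle. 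Passing to the opposite algebra does not affect centrality.

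It remains to identify the class of $c_\II$ in $h^0(\HH_U^\bb)\simeq\End_{\Mod(G)}(\Bbb{X}_U)^\op=\HH_U$. The class of a degree-zero cocycle endomorphism of $\II^\bb$ corresponds to the endomorphism of $\Bbb{X}_U$ it lifts. Because the augmentation $\Bbb{X}_U\to\II^0$ is $G$-equivariant, $c_\II$ lifts the action of $c$ on $\Bbb{X}_U=k[G/U]$, namely $\text{char}_{gU}\mapsto\text{char}_{cgU}$. Under the standard identification of $\End(\Bbb{X}_U)$ with $U$-biinvariant compactly supported functions, this is the operator determined by $\text{char}_U\mapsto\text{char}_{cU}=\text{char}_{UcU}$, i.e. $\tau_c$. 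Here $UcU=cU$ because $c$ is central.

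The main point requiring care, though routine, is this bookkeeping: the op-conventions and the identification $h^0\simeq\HH_U$. One must check that the lift corresponds to $\tau_c$ itself and not to $\tau_{c^{-1}}$. Centrality makes this harmless, since any consistent convention sends $c$ to the element supported on $cU$. Everything else follows formally from naturality of $\Phi(c)$ together with Lemma \ref{aM} and the diagram.
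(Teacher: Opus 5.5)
Your proposal is correct and follows essentially the same route as the paper. The paper packages your cocycle $c_{\II}$ as $\delta(\Phi(c))=[(\Phi(c)_{\II^i})]$ for a homomorphism $\delta\colon Z(\Mod(G))\to h^0(Z(\HH_U^\bb))$, then uses $\chi\circ\varepsilon=(\ref{edge1})$ and the unique extension of $\Phi(c)_{\Bbb{X}_U}$ to $\II^\bb$ to identify the class with $\tau_c$, exactly as you do. Your worry about $\tau_c$ versus $\tau_{c^{-1}}$ is harmless anyway, since $Z(G)$ is closed under inversion.
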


\begin{proof}
We begin by defining a homomorphism 
$$
\delta: Z(\Mod(G)) \longrightarrow h^0(Z(\HH_U^\bb)).
$$
Start with a tuple $t=(t_V)_{V\in \Mod(G)}$ in the center. To define $\delta(t)$ recall that elements of $Z^0(\HH_U^\bb)$ are tuples 
$a=(a_i)\in \prod_{i \in \Z}\End_{\Mod(G)}(\II^i)$ such that $ab=ba$ for all $b \in \HH_U^\bb$. Clearly $(t_{\II^i})$ is an example of such a tuple, since $t$ is in 
$Z(\Mod(G))$, and moreover it is a cocycle since
$$
d\big((t_{\II^i})\big)_j=d \circ t_{\II^j}-(-1)^0 \cdot t_{\II^{j+1}}\circ d=0.
$$
(On the right-hand side $d$ denotes the differential $d: \II^j \rightarrow \II^{j+1}$.) We let $\delta(t):=[(t_{\II^i})]$ be the class it defines in $h^0(Z(\HH_U^\bb))$. 

It now suffices to check that the diagram below commutes. (Note the close resemblance to (\ref{inidiag}); the next diagram is the analogue for $\mathcal{H}_U^\bb$, and we will soon upgrade it to (\ref{diag}).)
\begin{equation}\label{prediag}
\begin{tikzcd}
Z(\Mod(G)) \arrow[r, "\delta"] & h^0(Z(\HH_U^\bb)) \arrow[r, "\varepsilon"] & HH^0(\HH_U^\bb) \arrow[r, "\chi"] & Z^0(\Ext_{\Mod(G)}^*(\Bbb{X}_U, \Bbb{X}_U)) \\
k[Z(G)] \arrow[rrr, twoheadrightarrow, "\text{pr}"] \arrow[u, "\Phi"] & & & k[Z(G)/Z(G)\cap U].  \arrow[u, "c\mapsto \tau_c"]
\end{tikzcd}
\end{equation}
This is an easy diagram chase. Mapping $c \in Z(G)$ up via $\Phi$ gives the tuple of operators $(\Phi(c)_V)_{V \in \Mod(G)}$ where each component is given by the $c$-action.  
Thus $(\delta \circ \Phi)(c)$ is the cohomology class of $(\Phi(c)_{\II^i})$. As remarked in the previous subsection, $\chi\circ \varepsilon$ is the map on $h^0$ induced by the inclusion 
$Z(\HH_U^\bb)\subseteq \HH_U^\bb$. In particular $(\chi\circ \varepsilon \circ \delta \circ \Phi)(c)$ is the cohomology class defined by $(\Phi(c)_{\II^i})$ in $h^0(\HH_U^\bb)$. 

It remains to flesh out how $h^0(\HH_U^\bb)=\Hom_{K(G)}(\II^\bb, \II^\bb)^\op$ is identified with the algebra $\End_{\Mod(G)}(\Bbb{X}_U)^\op$. A morphism 
$\tau: \Bbb{X}_U\rightarrow \Bbb{X}_U$ in $\Mod(G)$ extends uniquely to a morphism $\widetilde{\tau}: \II^\bb \rightarrow \II^\bb$ in $K(G)$, and $\tau \mapsto \widetilde{\tau}$ gives an isomorphism $\End_{\Mod(G)}(\Bbb{X}_U)^\op \overset{\sim}{\longrightarrow} h^0(\HH_U^\bb)$ since $\II^\bb$ is homotopically injective. The $\tau$ corresponding to $\tau_c$ is 
$\Phi(c)_{\Bbb{X}_U}$, and in this case (by uniqueness of the extension to $\II^\bb$) the associated $\widetilde{\tau}$ can be represented by the morphism of complexes 
$\II^\bb \rightarrow \II^\bb$ which is $\Phi(c)_{\II^i}$ in degree $i$.
\end{proof}

Consequently the map $c \mapsto \tau_c$ gives a homomorphism into the image of the characteristic homomorphism, say
$$
\gamma: k[Z(G)/Z(G)\cap U] \longrightarrow \im(\chi_U).
$$
This is obviously injective since the target is contained in the Hecke algebra, and $c \mapsto \tau_c$ is injective as a morphism into 
$\End_{\Mod(G)}(\Bbb{X}_U)^\op$. Oftentimes $\{\tau_c\}_{c\in Z(G)}$ is known to span $Z^0(\Ext_{\Mod(G)}^*(\Bbb{X}_U, \Bbb{X}_U))$, in which case $\chi_U$ is of course surjective. This happens for split $p$-adic reductive groups for example, with $U$ being pro-$p$ Iwahori, as we explained after Corollary \ref{maincor} in Section \ref{padiclie}.

We proceed to study the image of the map $\gamma_\text{red}$ induced by $\gamma$ on nilreductions, using Theorem \ref{mainr} (and Proposition \ref{dgequiv}). 

\subsection{The image of $\chi_U$ modulo nilpotent elements}

We now establish the main result of this section, Theorem \ref{imchar}, whose proof relies heavily on Theorem \ref{mainr}. First we note that the equivalence in Proposition \ref{dgequiv} induces an isomorphism of algebras $\frak{B}$ fitting in the diagram 
\begin{equation}\label{diag}
\begin{tikzcd}
h^0(Z(\mathcal{H}_U^\bb)) \arrow[rr, "\frak{c}\circ \varepsilon"]& & Z^0(D(\mathcal{H}_U^\bb)^c) \arrow[d, "\frak{B}", "\simeq"'] \arrow[r, "\text{eval}_{\HH_U^\bb}"] & \im(\chi_U)  \\
Z(\text{Mod}(G)) \arrow[rr, "s"] \arrow[u, "\delta"]& &  Z^0(\text{Thick}_{D(G)}(\Bbb{X}_U)) &  \\
k[Z(G)]\arrow[rrr, twoheadrightarrow, "\text{pr}"] \arrow[u, hook, "\Phi"] & & & k[Z(G)/Z(G) \cap U]. \arrow[uu, hook, "\gamma"] 
\end{tikzcd}
\end{equation}
It is given by sending a tuple $(\alpha_{M^\bb})$ to the tuple $\frak{B}((\alpha_{M^\bb}))$ whose $V^\bb$-component corresponds to $\frak{t}(\alpha_{\frak{h}(V^\bb)})$ via the adjunction counit $(\frak{t}\circ \frak{h})(V^\bb) \overset{\sim}{\longrightarrow} V^\bb$. It will be a non-trivial key input that the previous diagram commutes. 

The outer rectangle of (\ref{diag}) is essentially the diagram (\ref{prediag}) in the proof of Lemma \ref{constraint}, so (\ref{diag}) is a refinement of (\ref{prediag}) and we are left with showing the inner rectangle of (\ref{diag}) commutes.

\begin{lem}\label{commdiag}
The diagram (\ref{diag}) is commutative. 
\end{lem}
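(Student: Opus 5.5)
The plan is to check only the inner rectangle of (\ref{diag}), since the outer one is (\ref{prediag}), which commutes by the proof of Lemma \ref{constraint}. Concretely, for every $t=(t_V)\in Z(\Mod(G))$ and every $V^\bb\in\text{Thick}_{D(G)}(\Bbb{X}_U)$ I want to show that
$$
\frak{B}\big((\frak{c}\circ\varepsilon\circ\delta)(t)\big)_{V^\bb}=s(t)_{V^\bb}.
$$
Both sides are morphisms in $D(G)$. By the definition of $\frak{B}$ and naturality of the counit $(\frak{t}\circ\frak{h})(V^\bb)\overset{\sim}{\rightarrow}V^\bb$, it suffices to show the following identity in $D(\HH_U^\bb)$, where $a:=(t_{\II^i})$ is the central cocycle representing $\delta(t)$:
$$
(\frak{c}\circ\varepsilon)(\delta(t))_{\frak{h}(V^\bb)}=\frak{h}\big(s(t)_{V^\bb}\big).
$$
Indeed, once this holds, applying $\frak{t}$ and using that the counit $\frak{t}\frak{h}\Rightarrow\text{Id}$ is a natural transformation identifies $\frak{t}(\frak{h}(s(t)_{V^\bb}))$ with $s(t)_{V^\bb}$.

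\textbf{Step 1: the left side is multiplication by $a$.} For a central cocycle $a\in Z^0(\HH_U^\bb)$, left multiplication $a_A\colon A\to A$ is a genuine map of dg bimodules. Hence for any dg module $M$, the morphism $\xi_M=\text{Id}_M\otimes a_A$ is computed on a homotopically projective replacement ${\bf p}M$ as the chain map $x\mapsto ax$. Multiplication by $a$ commutes with any quasi-isomorphism ${\bf p}M\to M$, so $\xi_M$ is represented by multiplication by $a$ on $M$ itself. This is essentially Lemma \ref{aM}, and I only need to write out the identification.

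\textbf{Step 2: multiplication by $a$ on $\frak{h}(V^\bb)$ is $\frak{h}(s(t)_{V^\bb})$.} By definition $\frak{h}(V^\bb)=\Hom^\bb_{\Mod(G)}(\II^\bb,{\bf i}V^\bb)$, with the module structure coming from precomposition (which is why $\HH_U^\bb$ carries the $\op$). So $a$ acts by $f\mapsto f\circ a$, where $f$ runs through the families of maps $f^i\colon\II^i\to({\bf i}V)^{i+r}$. Because $t$ lies in the center of $\Mod(G)$, naturality gives
$$
f^i\circ t_{\II^i}=t_{({\bf i}V)^{i+r}}\circ f^i .
$$
Hence $f\circ a=\tilde t\circ f$, where $\tilde t$ is the degreewise chain endomorphism $(t_{({\bf i}V)^j})_j$ of ${\bf i}V^\bb$. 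This $\tilde t$ is exactly the element of $Z^0(K(G))$ used to define $s(t)$ in the proof of Lemma \ref{split}. Since $s(t)$ is natural on $K(G)$ and ${\bf i}V^\bb\cong V^\bb$ in $D(G)$, $\tilde t$ represents $s(t)_{V^\bb}$. Postcomposition with $\tilde t$ is by definition $\frak{h}(s(t)_{V^\bb})$, which proves the identity above.

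\textbf{Expected obstacle.} The content is formal; the delicate part is bookkeeping. I must check that the $\op$ convention and the Koszul signs match: $a$ has degree $0$, so no signs should arise, but the left-versus-right module structure has to be tracked consistently. I also need that the characteristic action of $\varepsilon$ of a central cocycle really reduces to naive multiplication on non-cofibrant modules, which is Step 1. I would settle the sign and convention question first by testing everything on $V^\bb=\II^\bb$, where $\frak{h}(\II^\bb)=\HH_U^\bb$ and both sides visibly reduce to the class of $a$. This is consistent with the right-hand triangle of (\ref{diag}), which was already verified in Lemma \ref{constraint}.
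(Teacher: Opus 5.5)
Your argument is correct, and it is a mild but genuine repackaging of the paper's proof.

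\textbf{The paper's route.} The paper works on the $G$-side. It assumes $V^\bb$ homotopically injective and writes the counit $(\frak{t}\circ\frak{h})(V^\bb)\to V^\bb$ explicitly: first the replacement map coming from ${\bf p}$, then the evaluation map $\II^\bb\otimes_{\HH_U^\bb}\Hom^\bb_{\Mod(G)}(\II^\bb,V^\bb)\to V^\bb$. It then checks that $1\otimes a=a\otimes 1$ is intertwined with $t_{V^\bb}$, i.e.\ that $t_{V^{i+j}}(\phi_i(x))=\phi_i(xa)$.

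\textbf{Your route.} You prove the adjoint-transposed statement $a_{\frak{h}(V^\bb)}=\frak{h}(s(t)_{V^\bb})$ on the $\HH_U^\bb$-side. You in fact obtain it already as an equality of chain maps on the model $\Hom^\bb_{\Mod(G)}(\II^\bb,{\bf i}V^\bb)$. After that you only need the formal naturality of the counit.

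\textbf{Comparison.} The core computation is identical in both: naturality of $t$ against the components of $\phi$, which is your identity $f^i\circ t_{\II^i}=t_{({\bf i}V)^{i+r}}\circ f^i$.
\begin{itemize}
\item Your version spares you the explicit description of the counit and the bookkeeping with $\II^\bb\otimes_{\HH_U^\bb}{\bf p}(-)$.
\item In exchange you must identify ${\bf i}(s(t)_{V^\bb})$ with the degreewise map $\tilde t$ in $K(G)$. You justify this correctly, via naturality of the $K(G)$-level transformation together with ${\bf i}V^\bb\simeq V^\bb$ and full faithfulness of ${\bf i}$.
\end{itemize}
Both routes also rely on the same fact: the characteristic action of $\varepsilon([a])$ is naive multiplication by $a$ on an arbitrary dg module, which is your Step 1. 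The paper uses this implicitly when it identifies its left vertical arrow with $\frak{t}(a_{\frak{h}(V^\bb)})$.
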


\begin{proof}
We start with the upper left rectangle and chase an element $t=(t_V)_{V \in \Mod(G)}$ in $Z(\text{Mod}(G))$ around it. First, going across, remember that the section $s$ is defined in such a way that 
$s(t)$ is the tuple arising from the morphisms of complexes $t_{V^\bb}: V^\bb \rightarrow V^\bb$ with $t_{V^i}$ in degree $i$. 

On the other hand, taking the detour around the rectangle first yields the class $\delta(t)=[(t_{\II^i})]$. We let 
$a:=(t_{\mathcal{I}^i}) \in \prod_{i \in \Z}\Hom_{\text{Mod}(G)}(\mathcal{I}^i,\mathcal{I}^i)$ for the duration of this proof. Unwinding the definitions of $\varepsilon$ and $\frak{c}$ we see that $(\frak{c}\circ \varepsilon \circ \delta)(t)$ is left multiplication by $a$, denoted $a_{M^\bb}: M^\bb \rightarrow M^\bb$, on every perfect dg module $M^\bb$. We must compare the tuple $\frak{B}((a_{M^\bb}))$ to $s(t)$. The $V^\bb$-component of $\frak{B}((a_{M^\bb}))$ corresponds to $\frak{t}(a_{\frak{h}(V^\bb)})$ via the adjunction counit
$\frak{t}\circ \frak{h} \overset{\sim}{\longrightarrow} \text{Id}$. To identify this with $t_{V^\bb}$ we unpack the definitions. We begin with the dg module 
$\Hom_{\Mod(G)}^\bb(\mathcal{I}^\bb, {\bf{i}}V^\bb)$ and the map $(\frak{t} \circ \frak{h})(V^\bb)\rightarrow V^\bb$ which is $q_G$ applied to the following composition in the homotopy category $K(G)$, followed by composition with $q_G{\bf{i}}\rightarrow \text{Id}$, 
$$
\mathcal{I}^\bb\otimes_{\mathcal{H}_U^\bb} {\bf{p}}q_\HH\Hom_{\Mod_k(G)}^\bb(\mathcal{I}^\bb, {\bf{i}}V^\bb) \rightarrow 
\mathcal{I}^\bb\otimes_{\mathcal{H}_U^\bb} \Hom_{\Mod_k(G)}^\bb(\mathcal{I}^\bb, {\bf{i}}V^\bb) \rightarrow {\bf{i}}V^\bb.
$$
To simplify the notation we will assume that $V^\bb$ is homotopically injective and henceforth identify it with ${\bf{i}}V^\bb$. Now look at the diagram
$$
\begin{tikzcd}
\mathcal{I}^\bb\otimes_{\mathcal{H}_U^\bb} {\bf{p}}q_\HH\Hom_{\Mod(G)}^\bb(\mathcal{I}^\bb, V^\bb) \arrow[r] \arrow[d, "1\otimes a=a\otimes 1"] & \mathcal{I}^\bb\otimes_{\mathcal{H}_U^\bb} \Hom_{\Mod(G)}^\bb(\mathcal{I}^\bb, V^\bb) \arrow[d, "1\otimes a=a\otimes 1"] \arrow[r]& V^\bb \arrow[d, "t_{V^\bb}"] \\
\mathcal{I}^\bb\otimes_{\mathcal{H}_U^\bb} {\bf{p}}q_H\Hom_{\Mod(G)}^\bb(\mathcal{I}^\bb, V^\bb) \arrow[r] & \mathcal{I}^\bb\otimes_{\mathcal{H}_U^\bb} \Hom_{\Mod(G)}^\bb(\mathcal{I}^\bb, V^\bb) \arrow[r] & V^\bb
\end{tikzcd}
$$
The leftmost vertical arrow induces $\frak{t}(a_{\frak{h}(V^\bb)})$ when applying $q_G$. It therefore suffices to check that the rightmost square commutes. This comes down to verifying that 
$$
\begin{tikzcd}
\mathcal{I}^i\otimes_k \Hom_{\Mod(G)}^{j}(\mathcal{I}^\bb, V^\bb) \arrow[r] \arrow[d, "a\otimes 1"]
& V^{i+j} \arrow[d, "t_{V^{i+j}}"] \\
\mathcal{I}^i\otimes_k \Hom_{\Mod(G)}^j(\mathcal{I}^\bb, V^\bb) \arrow[r]
& V^{i+j}
\end{tikzcd}
$$
commutes for all $i$ and $j$, but this is straightforward. Indeed, for $x \in \mathcal{I}^i$ and $\phi \in \Hom_{\Mod(G)}^{j}(\mathcal{I}^\bb, V^\bb)$,
$$
t_{V^{i+j}}(\phi_i(x))=\phi_i(t_{\mathcal{I}^i}(x))=\phi_i(xa).
$$
This shows the inner rectangle commutes.

The perimetral rectangle was already shown to be commutative in the course of the proof of Lemma \ref{constraint}, where it was verified that $(\chi \circ \varepsilon \circ \delta \circ \Phi)(c)$ agrees with $\tau_c$. 
\end{proof}




We can now finally establish the main result of this section. 

\medskip

\noindent {\it{Proof of Theorem \ref{imchar}}}.
As noted earlier, $\gamma: c \mapsto \tau_c$ injects $k[Z(G)/Z(G)\cap U]$ into the Hecke algebra, and as a result at least we have an injective homomorphism 
$$
\gamma_\text{red}: k[Z(G)/Z(G)\cap U]_\text{red} \hookrightarrow \im(\chi_U)_\text{red}.
$$
In order to show it is also surjective, suppose $y \in \im(\chi_U)$ and write $y=\chi(\xi)$ for some $\xi \in HH^0(\HH_U^\bb)$. Recall that $\chi_U=\text{eval}_{\HH_U^\bb}\circ \frak{c}$. Now consider $\frak{c}(\xi)$ as an element of 
$Z^0(D(\HH_U^\bb)^c)$ by restricting it to just the compact objects. Via Proposition \ref{dgequiv} we get a corresponding element 
$\frak{B}(\frak{c}(\xi)) \in Z^0(\text{Thick}_{D(G)}(\Bbb{X}_U))$. By Theorem \ref{mainr} there is an $x \in k[Z(G)]$ such that $(s\circ \Phi)(x)$ agrees with 
$\frak{B}(\frak{c}(\xi))$ up to something locally nilpotent. Since the diagram (\ref{diag}) is commutative, we deduce that 
$(\frak{c}\circ \varepsilon \circ \delta \circ \Phi)(x)$ agrees with $\frak{c}(\xi)$ up to something locally nilpotent. Finally, evaluation at $\HH_U^\bb$ allows us to conclude that 
$\gamma(\text{pr}(x))$ agrees with $y=\chi(\xi)$ up to something nilpotent. Therefore the image of $y$ in the nilreduction lies in the image of $\gamma_\text{red}$ as claimed.

\section{Bounded variants of the graded center}

If $G$ is a locally pro-$p$ group, satisfying the usual centralizer hypothesis, we will identify $Z^0(D^b(G))_\text{lred}$ with the nilreduction
$\widehat{k[Z(G)]}_\text{nil}$ in Theorem \ref{bounded} below. Here $(-)_\text{nil}$ signifies the 'actual' nilreduction, as opposed to the topological nilreduction $(-)_\text{red}$ considered earlier. We also verify that the kernel of the restriction map $Z^*(D^+(G))\rightarrow Z^*(D^b(G))$ has square zero. 

When $G$ happens to be a $d$-dimensional $p$-adic Lie group we can say more. In this case $\Mod(G)$ satisfies Roos's condition AB4*-$d$, i.e. the derived products 
$\Pi^{(i)}$ vanish for $i>d$, which implies that every object $V^\bb$ of $D(G)$ is a homotopy limit of its truncations $\tau_{\geq -n}V^\bb$. Using this, 
the kernel of the restriction map $Z^*(D(G))\rightarrow Z^*(D^+(G))$ is easily seen to also have square zero.

\subsection{Some background in homological algebra}

A Grothendieck category $\mathcal{C}$ is automatically AB3* but not necessarily AB4*. That is $\mathcal{C}$ has products, and even limits by \cite[Cor.~X.4.4]{Ste75}, but taking products is not exact in general. Products can be made explicit using the Gabriel-Popescu theorem, for which we follow \cite[Thm.~19.14.3]{Sta}.
Namely, if $Y$ is a generator of $\mathcal{C}$, and $E=\End_{\mathcal{C}}(Y)^\text{op}$, then $\Hom_\mathcal{C}(Y,-)$ is a fully faithful functor $f: \mathcal{C}\rightarrow \Mod(E)$ with an exact left adjoint, say $t: \Mod(E)\rightarrow \mathcal{C}$. Note that $t \circ f \overset{\sim}{\longrightarrow} \text{Id}$. With this notation, products in $\mathcal{C}$ can be expressed as follows. If $(V_\alpha)_{\alpha \in A}$ is a set-indexed collection of objects in $\mathcal{C}$ it is straightforward to verify that 
$$
\prod_{\alpha \in A} V_\alpha=t\big(\prod_{\alpha \in A}f(V_\alpha)\big)=t\big(\prod_{\alpha \in A}\Hom_\mathcal{C}(Y,V_\alpha)\big)
$$
where the product on the right-hand side is the one in $\Mod(E)$. To see this note that $f$ commutes with limits because it has a left adjoint, namely $t$. Now apply $t$ to both sides of $f(\prod_{\alpha \in A} V_\alpha)=\prod_{\alpha \in A} f(V_\alpha)$.

Because $t$ is exact, and products in $\Mod(E)$ \emph{are} exact, we immediately also get an explicit formula for the $i^\text{th}$ derived product, i.e.
$$
{\prod_{\alpha \in A}}^{(i)} V_\alpha=t\big(\prod_{\alpha \in A}R^if(V_\alpha)\big)=t\big(\prod_{\alpha \in A}\Ext_\mathcal{C}^i(Y,V_\alpha)\big).
$$
In particular we deduce that if $\Ext_\mathcal{C}^i(Y,-)=0$ for $i>d$ then $\mathcal{C}$ satisfies Roos's condition AB4*-$d$ from \cite[Def.~1.1, p.~66]{Roo08}, that is to say ${\prod_{\alpha \in A}}^{(i)}$ is zero
for $i>d$, so at least taking products has finite cohomological amplitude when such a $Y$ exists. (We should note that the above formula for ${\prod_{\alpha \in A}}^{(i)}$ also 
appears at the very bottom of \cite[p.~67]{Roo08} in a different notation.)

The reason this is important to us is the following result, notably part two. 

\begin{lem}
Assume $\mathcal{C}$ is Grothendieck, and let $V^\bb$ be a complex representing an object of $D(\mathcal{C})$. 
\begin{itemize}
\item[(1)] $V^\bb$ is a homotopy colimit of $\tau_{\leq 0}V^\bb \rightarrow \tau_{\leq 1}V^\bb \rightarrow \cdots$, i.e. there is an isomorphism
$$
\underset{n\geq 0}{\text{hocolim}}\: \tau_{\leq n}V^\bb \overset{\sim}{\longrightarrow} V^\bb.
$$
(This is true more generally for AB4 categories, which means coproducts exist and are exact.)
\item[(2)] $V^\bb$ is a homotopy limit of $\tau_{\geq 0}V^\bb \leftarrow \tau_{\geq -1}V^\bb \leftarrow \cdots$, i.e. there is an isomorphism
$$
V^\bb \overset{\sim}{\longrightarrow} \underset{n\geq 0}{\text{holim}}\: \tau_{\geq -n}V^\bb 
$$
provided $\mathcal{C}$ is AB4* or even just AB4*-$d$ for some $d \in \Z_{\geq 0}$. 
\end{itemize}
\end{lem}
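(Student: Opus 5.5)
The plan for both parts is to use the explicit Milnor-type (co)fiber descriptions of homotopy (co)limits and then compute cohomology degreewise.

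For (1), I take $\text{hocolim}_n X_n$ to be the cone of $1-\text{shift}:\bigoplus_n X_n\to\bigoplus_n X_n$, with $X_n=\tau_{\leq n}V^\bb$. The compatible maps $\tau_{\leq n}V^\bb\to V^\bb$ induce a morphism $\text{hocolim}_n\,\tau_{\leq n}V^\bb\to V^\bb$; the choice of lift is irrelevant for the isomorphism claim. Since $\mathcal{C}$ is AB4, coproducts are exact, so $h^i$ commutes with the coproduct in $D(\mathcal{C})$. The long exact sequence of the triangle then reads
$$
0\rightarrow \bigoplus_n h^i(X_n)\xrightarrow{1-\text{shift}} \bigoplus_n h^i(X_n)\rightarrow h^i(\text{hocolim})\rightarrow 0 .
$$
Injectivity of $1-\text{shift}$ on a direct sum is formal, so $h^i(\text{hocolim})=\varinjlim_n h^i(\tau_{\leq n}V^\bb)$. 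This system is constant equal to $h^i(V^\bb)$ for $n\geq i$, so the comparison map is a quasi-isomorphism.

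For (2), I define $\text{holim}_n X_n$ as the fiber of $1-\text{shift}:\prod_n X_n\to\prod_n X_n$. Here $\prod$ is the product in $D(\mathcal{C})$, computed by applying the componentwise product to homotopically injective resolutions, which exist since $\mathcal{C}$ is Grothendieck. I would not compute $h^i$ of this holim directly; instead I compare towers. The triangles $\tau_{<-n}V^\bb\to V^\bb\to\tau_{\geq -n}V^\bb\to$ form a (levelwise) triangle of towers $Y_\bullet\to V^\bb_{\text{const}}\to X_\bullet$, with $Y_n:=\tau_{<-n}V^\bb$. Since $1-\text{shift}$ is natural, I can build a triangle
$$
\text{holim}\,Y_\bullet\rightarrow \text{holim}\,V^\bb_{\text{const}}\rightarrow\text{holim}\,X_\bullet\rightarrow,
$$
at least up to the usual non-uniqueness of cones, which does not affect the cohomology computation. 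For the constant tower, $1-\text{shift}$ on $\prod_n V^\bb$ is a split epimorphism whose fiber is the diagonal copy of $V^\bb$, so $\text{holim}\,V^\bb_{\text{const}}\simeq V^\bb$. It therefore suffices to show $h^i(\text{holim}\,Y_\bullet)=0$ for every fixed $i$.

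To get this vanishing, fix $i$ and pick $N$ with $-N+d<i-1$. By cofinality, $\text{holim}_n Y_n\simeq\text{holim}_{n\geq N}Y_n$; this is a standard check that dropping finitely many terms of the tower does not change the fiber. Every $Y_n$ with $n\geq N$ is concentrated in degrees $\leq -N-1$. The key claim is that under AB4*-$d$ the derived product of a family of complexes uniformly concentrated in degrees $\leq m$ has cohomology only in degrees $\leq m+d$. Granting it, both copies of $\prod_{n\geq N}Y_n$ have vanishing $h^j$ for $j\geq -N+d$, and hence so does the fiber. In particular $h^i(\text{holim}\,Y_\bullet)=0$, which proves (2).

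I expect this amplitude bound for derived products of unbounded-below complexes to be the main obstacle. For bounded complexes it follows by induction on length from $\prod^{(p)}=0$ for $p>d$, using the triangles given by brutal or canonical truncation. For unbounded-below complexes, the hypercohomology spectral sequence $\prod^{(p)}h^q\Rightarrow h^{p+q}$ has convergence issues. I would first try to avoid these by resolving each $Y_n$ via Cartan–Eilenberg injective resolutions in degrees $\leq -N-1$ and arguing directly with the componentwise product. If that fails, I would use the Gabriel–Popescu description $\prod=t\circ\prod\circ Rf$ from above together with the vanishing of $\text{Ext}^p(Y,-)$ for $p>d$, when available, or else fall back on Roos's results in \cite{Roo08}.
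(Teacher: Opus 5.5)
Part (1) is the standard argument and is fine (the paper just cites it). For (2) your architecture is close to the paper's. You compare the constant tower $V^\bb$ with $(\tau_{\geq -n}V^\bb)_n$ through the tails $Y_n=\tau_{<-n}V^\bb$, whereas the paper compares $(\tau_{\geq -n}V^\bb)_{n\geq n_0}$ with the constant tower $\tau_{\geq -n_0}V^\bb$ and then lets $n_0$ vary. The only non-formal input is the bound $h^j(\prod_n W_n^\bb)=0$ for $j>m+d$ when every $W_n^\bb$ has cohomology in degrees $\leq m$. This is exactly where AB4*-$d$ must be used, and you leave it open, so as written the proposal does not prove (2).

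Your reason for avoiding the hypercohomology spectral sequence is also mistaken. AB4*-$d$ says precisely that $\Pi:\mathcal{C}^I\to\mathcal{C}$ has finite cohomological dimension. For such a functor the spectral sequence $E_2^{r,s}={\prod}^{(r)}h^s(W_n^\bb)\Rightarrow h^{r+s}(\prod W_n^\bb)$ converges for arbitrary unbounded complexes (Verdier's criterion; this is Proposition \ref{spectral}). Since $E_2^{r,s}=0$ unless $0\leq r\leq d$ and $s\leq m$, your bound is immediate. The paper needs this unbounded case too, because the family $(\tau_{\geq -n}V^\bb)_n$ is not uniformly bounded below.

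If you prefer to avoid spectral sequences, truncate an injective resolution of each term at its $d$-th cosyzygy, which is $\Pi$-acyclic by dimension shifting. This gives a complex of $\Pi$-acyclics in degrees $\leq m+d$. The same dimension shifting shows that $\Pi$ keeps exact complexes of $\Pi$-acyclics exact, so termwise $\Pi$ computes $R\Pi$. An untruncated Cartan--Eilenberg resolution runs into precisely the totalization problem you were worried about.

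Second, the triangle $\text{holim}\,Y_\bullet\to\text{holim}\,V^\bb_{\text{const}}\to\text{holim}\,X_\bullet\to$ does not follow from the triangulated axioms. Here $1-\text{sh}$ acts as a morphism of the product triangle $\prod Y_n\to\prod V^\bb\to\prod X_n\to$ to itself. A morphism of distinguished triangles does not in general induce a distinguished triangle on the fibers of its components. The $3\times 3$ lemma only produces the fiber of \emph{some} compatible endomorphism of $\prod X_n$, not necessarily of $1-\text{sh}$. So the non-uniqueness of cones is the actual difficulty, not a harmless ambiguity.

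You can bypass this triangle altogether:
\begin{itemize}
\item After restricting to $n\geq N$, a morphism of defining triangles gives $\varphi: V^\bb\simeq\text{holim}_{n\geq N}V^\bb\to\text{holim}_{n\geq N}X_n$.
\item The cone of $\prod_{n\geq N}V^\bb\to\prod_{n\geq N}X_n$ is $\Sigma\prod_{n\geq N}Y_n$, so by the bound this map is bijective on $h^j$ for $j>d-N-1$.
\item The five lemma then makes $h^i(\varphi)$ bijective for $i>d-N$, and letting $N$ vary finishes the proof.
\end{itemize}
This is essentially the paper's argument, with $V^\bb$ in place of $\tau_{\geq -n_0}V^\bb$ and your cone bound in place of the paper's comparison of $E_2$-pages.
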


\begin{rem}
We note that $D(\mathcal{C})$ has coproducts since $\mathcal{C}$ is AB4. When $\mathcal{C}$ is \emph{not} Grothendieck $D(\mathcal{C})$ may not have products. Nevertheless, if $\mathcal{C}$ at least has enough injectives we can take the product of objects in $D^+(\mathcal{C})$. 
We refer to \cite[Lem.~13.33.5, Lem.~13.34.2, Lem.~13.34.3]{Sta} for more details on the existence of coproducts and products in $D(\mathcal{C})$ for arbitrary $\mathcal{C}$. \
Since we are assuming our $\mathcal{C}$ \emph{is} Grothendieck this will not be an issue; see Appendix A. 
\end{rem}

\begin{proof}
Part (1) is well-known, and so is the dual assertion in part (2) for AB4* categories (see \cite[Rem.~2.3, p.~214]{BN93} or the proof of \cite[Lem.~13.34.7]{Sta} for example). 
The generalization to AB4*-$d$ categories is stated as \cite[Thm.~1.3]{HX09}. We give a (in our opinion) simpler proof below. 

We fix an $n_0 \in \Z_{\geq 0}$ once and for all, and consider the inverse system $(\tau_{\geq -n}V^\bb)_{n \geq n_0}$ which maps to the 'constant' inverse system
$(\tau_{\geq -n_0}V^\bb)_{n \geq n_0}$ via the transition maps $\tau_{\geq -n}V^\bb \rightarrow \tau_{\geq -n_0}V^\bb$ for $n \geq n_0$. This gives rise to the following commutative diagram in $D(\mathcal{C})$ whose rows are distinguished triangles, 
$$
\begin{tikzcd}
\underset{n\geq n_0}{\text{holim}}\: \tau_{\geq -n}V^\bb  \arrow[r] \arrow[d, dashrightarrow, "\varphi"] & \prod_{n \geq n_0} \tau_{\geq -n}V^\bb \arrow[r, "1-\text{sh}"] \arrow[d]
& \prod_{n \geq n_0} \tau_{\geq -n}V^\bb \arrow[d] \arrow[r] & \Sigma \big(\underset{n\geq n_0}{\text{holim}}\: \tau_{\geq -n}V^\bb \big) \arrow[d, "\Sigma(\varphi)"]\\
\tau_{\geq -n_0}V^\bb \arrow[r] & \prod_{n \geq n_0} \tau_{\geq -n_0}V^\bb \arrow[r, "1-\text{sh}"]
& \prod_{n \geq n_0} \tau_{\geq -n_0}V^\bb \arrow[r] & \Sigma \big(\tau_{\geq -n_0}V^\bb\big)
\end{tikzcd}
$$
where $\text{sh}$ stands for the usual shift morphism. Passing to the long exact sequences of cohomology gives commutative diagrams in $\mathcal{C}$, 
$$
\begin{tikzcd}
h^i(\underset{n\geq n_0}{\text{holim}}\: \tau_{\geq -n}V^\bb)  \arrow[r] \arrow[d, "h^i(\varphi)"] & h^i(\prod_{n \geq n_0} \tau_{\geq -n}V^\bb) \arrow[r, "1-\text{sh}"] \arrow[d]
& h^i(\prod_{n \geq n_0} \tau_{\geq -n}V^\bb) \arrow[d] \arrow[r] & h^{i+1}(\underset{n\geq n_0}{\text{holim}}\: \tau_{\geq -n}V^\bb) \arrow[d, "h^{i+1}(\varphi)"]\\
h^i(\tau_{\geq -n_0}V^\bb) \arrow[r] & h^i(\prod_{n \geq n_0} \tau_{\geq -n_0}V^\bb) \arrow[r, "1-\text{sh}"]
& h^i(\prod_{n \geq n_0} \tau_{\geq -n_0}V^\bb) \arrow[r] & h^{i+1}(\tau_{\geq -n_0}V^\bb).
\end{tikzcd}
$$
We claim the middle two vertical maps are isomorphisms for $i>d-n_0$. To see this we use the following 'hypercohomology' spectral sequence: Let $(W_n^\bb)_{n \in I}$ be a set-indexed family of objects in $D(\mathcal{C})$. Then, as shown in Appendix A, their product $\prod_{n \in I} W_n^\bb$ exists in $D(\mathcal{C})$ and 
there is a convergent spectral sequence
$$
E_2^{r,s}={\prod_{n \in I}}^{(r)} h^s(W_n^\bb) \Longrightarrow h^{r+s}(\prod_{n \in I} W_n^\bb). 
$$
It is concentrated in the vertical strip $0\leq r\leq d$ as $\mathcal{C}$ is AB4*-$d$. Applying this to $(\tau_{\geq -n}V^\bb)_{n \geq n_0}$ and $(\tau_{\geq -n_0}V^\bb)_{n \geq n_0}$
it suffices to show that on the $E_2$-pages the maps 
$$
{\prod_{n \geq n_0}}^{(r)} h^s(\tau_{\geq -n}V^\bb) \longrightarrow {\prod_{n \geq n_0}}^{(r)} h^s(\tau_{\geq -n_0}V^\bb)
$$
are isomorphisms for $r+s>d-n_0$ (see the 'comparison' theorem \cite[Thm.~5.2.12, p.~126]{Wei94} for example). This is straightforward. Either $r>d$ or $s>-n_0$. If $r>d$ the source and the target are both trivial since ${\prod_{n \geq n_0}}^{(r)}=0$. On the other hand, if $s>-n_0$ we have the isomorphisms
$$
h^s(V^\bb)\simeq h^s(\tau_{\geq -n}V^\bb) \longrightarrow h^s(\tau_{\geq -n_0}V^\bb) \simeq h^s(V^\bb)
$$
for $n \geq n_0$. This proves the claim about the middle two vertical maps being isomorphisms.

If in fact $i>d-n_0+1$ the same conclusion also applies to $i-1$, in addition to $i$, and the five lemma lets us conclude that $h^i(\varphi)$ is an isomorphism
$$
h^i(\underset{n\geq n_0}{\text{holim}}\: \tau_{\geq -n}V^\bb) \overset{\sim}{\longrightarrow} h^i(\tau_{\geq -n_0}V^\bb) \simeq h^i(V^\bb).
$$
However, the left-hand side is independent of $n_0$ up to isomorphism (see \cite[Rem.~13.34.4, Lem.~15.88.6]{Sta}). Since $n_0$ was arbitrary the map $h^i(V^\bb) \longrightarrow h^i(\underset{n\geq 0}{\text{holim}}\: \tau_{\geq -n}V^\bb)$ must be an isomorphism for all $i$. 
\end{proof}

As an application, suppose $t \in Z^*(D(\mathcal{C}))$ is homogeneous and $t$ vanishes on all the truncations $\tau_{\leq n}V^\bb$. Then $t$ is also zero on 
$\bigoplus_{n\geq 0}\tau_{\leq n}V^\bb$ and therefore, by our general remarks, $t^2$ vanishes on $\underset{n\geq 0}{\text{hocolim}}\: \tau_{\leq n}V^\bb$. 
(Take $\mathcal{X}$ in Lemma \ref{nilp} to be the single element $\bigoplus_{n\geq 0}\tau_{\leq n}V^\bb$ and observe that the homotopy colimit lies in 
$\langle \mathcal{X} \rangle_2$ by its defining triangle.) Similarly for homotopy limits. 

The upshot of this is the next general observation.

\begin{prop}\label{sq0}
Let $\mathcal{C}$ be a Grothendieck category. Then the following holds. 
\begin{itemize}
\item[(1)] The kernel of the restriction map $Z^*(D^+(\mathcal{C}))\longrightarrow Z^*(D^b(\mathcal{C}))$ has square zero;
\item[(2)] The kernel of $Z^*(D(\mathcal{C}))\longrightarrow Z^*(D^+(\mathcal{C}))$ has square zero if $\mathcal{C}$ is AB4*-$d$ for some $d \in \Z_{\geq 0}$. 
\end{itemize}
\end{prop}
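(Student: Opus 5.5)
The plan is to deduce both parts from the remark just before the statement: if a homogeneous $t$ vanishes on a single object $\mathcal{X}$, then by Lemma \ref{nilp} $t^2$ vanishes on $\langle \mathcal{X}\rangle_2$. Since the kernel is an ideal spanned by homogeneous elements and products of homogeneous kernel elements are handled by the first assertion of Lemma \ref{nilp} (with $i=1$), it suffices to show that for homogeneous $\sigma,\tau$ in the kernel, $\sigma\tau$ vanishes on every object. The statement of Lemma \ref{nilp} is for elements of a single category $\mathcal{T}$, and we apply it inside $\mathcal{T}=D^+(\mathcal{C})$, respectively $\mathcal{T}=D(\mathcal{C})$.

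For (1), take $V^\bb\in D^+(\mathcal{C})$. By part (1) of the preceding lemma, $V^\bb\simeq \text{hocolim}_n\, \tau_{\leq n}V^\bb$. This homotopy colimit sits in a distinguished triangle
\[
\textstyle\bigoplus_{n}\tau_{\leq n}V^\bb \xrightarrow{1-\text{sh}} \bigoplus_n \tau_{\leq n}V^\bb \longrightarrow V^\bb \longrightarrow .
\]
I need to check that the coproduct $\bigoplus_n \tau_{\leq n}V^\bb$ computed in $D(\mathcal{C})$ lies in $D^+(\mathcal{C})$. Since coproducts in a Grothendieck category are exact, cohomology commutes with coproducts, and all the $\tau_{\leq n}V^\bb$ are uniformly bounded below. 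So the coproduct lies in $D^+$ and is the coproduct there as well, and the triangle lives in $D^+(\mathcal{C})$.

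Now let $\sigma,\tau$ be in the kernel. Each $\tau_{\leq n}V^\bb$ is bounded, so $\sigma$ and $\tau$ vanish on it. The step I regard as the main point is that they then vanish on the coproduct $\bigoplus_n \tau_{\leq n}V^\bb$. I would argue this via naturality with respect to the coprojections $\iota_n$: we have $\tau_{\oplus}\circ\iota_n=\Sigma^s(\iota_n)\circ\tau_{\tau_{\leq n}V^\bb}=0$ for every $n$, and by the universal property of the coproduct this forces $\tau_\oplus=0$. The same holds for $\sigma$. The diagram chase in the proof of Lemma \ref{nilp}, applied to the triangle above, then gives $(\sigma\tau)_{V^\bb}=0$.

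For (2) I dualise. Under AB4*-$d$, part (2) of the preceding lemma gives $V^\bb\simeq\text{holim}_n\,\tau_{\geq -n}V^\bb$, with defining triangle
\[
\text{holim}_n\, \tau_{\geq -n}V^\bb\to \textstyle\prod_n \tau_{\geq -n}V^\bb \xrightarrow{1-\text{sh}} \prod_n \tau_{\geq -n}V^\bb\to .
\]
The truncations $\tau_{\geq -n}V^\bb$ lie in $D^+(\mathcal{C})$, so $\sigma$ and $\tau$ vanish on them. They also vanish on the product in $D(\mathcal{C})$: naturality with respect to the projections $p_n$, combined with the fact that $\Sigma^s$ commutes with products, gives $p_n\circ\tau_\Pi=0$ for all $n$, hence $\tau_\Pi=0$. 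Here the product need not be bounded below, but this is harmless because $\sigma,\tau$ are defined on all of $D(\mathcal{C})$. The triangle is rotated compared with the one in Lemma \ref{nilp}, so I would rotate it (using that graded central elements commute with $\Sigma$ up to sign, so vanishing is preserved). Lemma \ref{nilp} then gives $\sigma\tau=0$ on $V^\bb$. Note that AB4*-$d$ is used only through the holim identification.
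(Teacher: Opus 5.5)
Your proposal is correct and follows essentially the same route as the paper: vanishing on the truncations gives vanishing on their coproduct (resp.\ product), and the diagram chase of Lemma \ref{nilp}, applied to the defining triangle of the homotopy colimit (resp.\ limit), shows that $\sigma\tau$ vanishes on $V^\bullet$. You also check that $\bigoplus_n \tau_{\leq n}V^\bullet$ and the hocolim triangle stay inside $D^+(\mathcal{C})$; the paper leaves this implicit, but it is needed, since in (1) the central elements are only defined on $D^+(\mathcal{C})$.
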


\begin{proof}
In part (1) suppose $t \in Z^*(D^+(\mathcal{C}))$ vanishes on all objects of $D^b(\mathcal{C})$. If $V^\bb$ is in $D^+(\mathcal{C})$ all its truncations 
$\tau_{\leq n}V^\bb$ belong to $D^b(\mathcal{C})$. Thus $t$ is zero on their coproduct and consequently $t^2$ is zero on their homotopy colimit which is $V^\bb$. 
More generally, if $\sigma, \tau$ both lie in the kernel then, as we have seen in Lemma \ref{nilp}, $\sigma\tau$ vanishes on the homotopy colimit $V^\bb$ as well, so indeed the kernel squares to zero.  

For part (2) assume $t \in Z^*(D(\mathcal{C}))$ is zero on all objects of $D^+(G)$. For any $V^\bb$ the truncations $\tau_{\geq -n}V^\bb$ now lie in $D^+(\mathcal{C})$ and thus 
$t$ is zero on their product. As in the previous paragraph we conclude that $t^2$ is zero on $V^\bb$.  
\end{proof}

\begin{rem}
We emphasize that this does {\emph{not}} guarantee that the induced map $Z^*(D^+(\mathcal{C}))_\text{lred}\longrightarrow Z^*(D^b(\mathcal{C}))_\text{lred}$, for example, is injective: If $t \in Z^*(D^+(\mathcal{C}))$ is nilpotent on all objects of $D^b(\mathcal{C})$ the nilpotency index on $\tau_{\leq n}V^\bb$ could very well depend on $n$, in which case we would not necessarily be able to conclude that $t$ is nilpotent on $\bigoplus_{n\geq 0}\tau_{\leq n}V^\bb$ (and hence on the homotopy colimit $V^\bb$). 

On the other hand the induced map between the 'actual' nil-reductions (which means modding out the homogeneous nilpotent elements) \emph{is} injective by Proposition \ref{sq0}.
\end{rem}

\begin{rem}\label{nonneg}
We note that $Z^*(D^b(\mathcal{C}))$ is known to be non-negatively graded, that is $Z^r(D^b(\mathcal{C}))=0$ for all $r<0$, by \cite[Cor.~2.8, Rem.~2.9, p.~449]{KY11} which applies to any abelian category $\mathcal{C}$ with enough injectives (or enough projectives). In all candor, \cite[Cor.~2.8]{KY11} assumes $\mathcal{C}$ has enough projectives, but the 'injective' counterpart follows immediately by replacing $\mathcal{C}$ with $\mathcal{C}^\text{op}$ and observing that $Z^*(\mathcal{T}^\text{op})\simeq Z^*(\mathcal{T})$ for any triangulated category $\mathcal{T}$, e.g. $D^b(\mathcal{C})$.

For $r<0$, we deduce from Proposition \ref{sq0} that all elements of $Z^r(D^+(\mathcal{C}))$ have square zero, and provided $\mathcal{C}$ is AB4*-$d$ for some $d$, all elements $t \in Z^r(D(\mathcal{C}))$ have fourth power $t^4=0$. In particular, for negative $r$,  the elements of $Z^r(D(\mathcal{C}))$ are nilpotent (in the AB4*-$d$
 case) and not just locally nilpotent.
\end{rem}

In general we have the split surjection $\pi^b: Z^0(D^b(\mathcal{C}))\twoheadrightarrow Z(\mathcal{C})$ obtained by viewing $\mathcal{C}$ as a full subcategory of $D^b(\mathcal{C})$
in the usual fashion (as complexes concentrated in degree zero). 

\begin{prop}\label{DbC}
The kernel of $\pi^b$ consists of locally nilpotent elements, and $\pi^b$ induces an isomorphism
$$
Z^0(D^b(\mathcal{C}))_\text{lred}\overset{\sim}{\longrightarrow} Z(\mathcal{C})_\text{lred}.
$$
(Here $\mathcal{C}$ can be any abelian category.)
\end{prop}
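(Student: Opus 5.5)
Since $\pi^b$ is a split surjection (the splitting $s^b$ being the degreewise construction of Lemma \ref{split}, which works verbatim for $D^b(\mathcal{C})$ and any abelian $\mathcal{C}$), the induced map on lred-quotients is surjective provided it is well defined. For well-definedness, $\pi^b$ sends locally nilpotent elements to locally nilpotent elements, because $\mathcal{C}$ is a full subcategory of $D^b(\mathcal{C})$. Hence the whole statement reduces to one claim: if $t\in Z^0(D^b(\mathcal{C}))$ and $t_V$ is nilpotent for every $V\in\mathcal{C}$, then $t_{V^\bb}$ is nilpotent for every $V^\bb\in D^b(\mathcal{C})$. Applied to $t\in\ker(\pi^b)$, this gives the first assertion. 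Applied to $t-s^b(u)$, where $u=\pi^b(t)$ is locally nilpotent, it gives injectivity: $s^b(u)$ is itself locally nilpotent, since on a complex $V^\bb$ it acts by $u_{V^i}$ in each degree, so some power vanishes degreewise on bounded complexes; and a sum of two commuting locally nilpotent central elements is locally nilpotent, because their values on a fixed object lie in a commutative subring of $\End(V^\bb)$.

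To prove the claim I would induct on the amplitude of $V^\bb$, i.e. on the number of nonzero cohomology objects. If $V^\bb$ has a single nonzero cohomology in degree $n$, then $V^\bb\simeq\Sigma^{-n}h^n(V^\bb)$. Since $t\Sigma=\Sigma t$ in degree zero, $t_{V^\bb}$ corresponds to $\Sigma^{-n}(t_{h^n(V^\bb)})$, which is nilpotent by hypothesis.

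For the inductive step, use the truncation triangle
$$\tau_{\leq m}V^\bb\longrightarrow V^\bb\longrightarrow\tau_{> m}V^\bb\longrightarrow\Sigma\tau_{\leq m}V^\bb$$
with both outer terms of smaller amplitude. By induction there is $N$ with $t^N$ vanishing on both outer terms. The diagram chase in the proof of Lemma \ref{nilp}, with $\sigma=\tau=t^N$ and naturality of $t$ in the triangle, then gives $t^{2N}_{V^\bb}=0$. Concretely, $t^N_{V^\bb}$ composed with $V^\bb\to\tau_{>m}V^\bb$ is zero, so $t^N_{V^\bb}$ factors through $\tau_{\leq m}V^\bb\to V^\bb$; composing a second time gives zero. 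By induction, every object of $D^b(\mathcal{C})$ then has nilpotent $t$, with index at most $2^{\ell-1}N_0$ when the amplitude is $\ell$ and $N_0$ bounds the indices on the cohomology objects.

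The only mildly delicate point is the factorization argument in the inductive step. It is the triangulated analogue of the argument in Lemma \ref{serre}, and one must use the naturality of $t$ with respect to the triangle maps rather than any exactness in $\mathcal{C}$. Beyond that, everything is formal and needs no hypothesis on $\mathcal{C}$ beyond being abelian. This is consistent with the parenthetical remark in the statement.
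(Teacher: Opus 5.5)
Your proof is correct and follows essentially the paper's route: the paper identifies $D^b(\mathcal{C})$ with $\text{Thick}_{D(\mathcal{C})}(\mathcal{C})$ and applies Lemma \ref{nilp} to elements vanishing on $\mathcal{C}$. That lemma is the same two-step factorization you run on truncation triangles, and the paper then uses the splitting $s$, which preserves locally nilpotent elements by boundedness, exactly as you do. The one difference is that you induct under the weaker hypothesis that $t$ is merely nilpotent on each object of $\mathcal{C}$; this works because a fixed bounded object has only finitely many cohomology objects, and it gives injectivity directly for $t$ itself, so your detour through $t-s^b(u)$ is harmless but unnecessary.
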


\begin{proof}
It is well-known that any object of $D^b(\mathcal{C})$ can be represented, up to isomorphism, by a bounded complex. See part (3) of \cite[Lem.~13.11.5]{Sta} for instance.
By induction on the amplitude of this complex, one easily verifies that $D^b(\mathcal{C})$ is the smallest strictly full triangulated subcategory of $D(\mathcal{C})$ containing $\mathcal{C}$. Hence
$D^b(\mathcal{C})=\text{Thick}_{D(\mathcal{C})}(\mathcal{C})$ since $D^b(\mathcal{C})$ is clearly thick. Now the first claim follows from Lemma \ref{nilp} by taking 
$\mathcal{T}=D^b(\mathcal{C})$ and $\mathcal{X}=\mathcal{C}$.

A splitting $s$ of $\pi^b$ is defined by taking $(t_V)_{V \in \mathcal{C}}$ to the collection of morphisms of complexes 
$(t_{V^\bb})_{V^\bb \in D^b(\mathcal{C})}$ where $t_{V^\bb}$ is given by $t_{V^i}$ in degree $i$.

Obviously $\pi^b$ sends locally nilpotent elements to locally nilpotent elements, and therefore induces a map 
$Z^0(D^b(\mathcal{C}))_\text{lred}\rightarrow Z(\mathcal{C})_\text{lred}$. On the other hand, $s$ also preserves locally nilpotent elements due to the boundedness assumption; 
$V^i=0$ for $|i|>\!\!>0$ so given $V^\bb$ there is a large enough $N$ such that $t_{V^i}^N=0$ for all $i$, i.e. $t_{V^\bb}^N=0$. Thus $s$ induces a map on $(-)_\text{lred}$ in the other direction, $Z(\mathcal{C})_\text{lred}\rightarrow Z^0(D^b(\mathcal{C}))_\text{lred}$. Since $\pi^b \circ s=\text{Id}$ and $t-(s \circ \pi^b)(t)$ is in $\ker(\pi^b)$, and is therefore locally nilpotent, for all $t \in Z^0(D^b(\mathcal{C}))$, we conclude that the two maps induced by $\pi^b$ and $s$ on $(-)_\text{lred}$ are mutually inverse isomorphisms. 
\end{proof}

\begin{rem}\label{hered}
When $\mathcal{C}$ is hereditary, i.e. $\Ext_\mathcal{C}^i(V,W)=0$ for $i>1$ and all $V,W \in \mathcal{C}$, Proposition \ref{DbC} holds without passing to
$(-)_\text{lred}$. In other words the splitting $s$ is an inverse of $\pi^b$, resulting in the isomorphism 
$Z^0(D^b(\mathcal{C}))\overset{\sim}{\longrightarrow} Z(\mathcal{C})$. Indeed, to see that $(s \circ \pi^b)(t)=t$ we compare the $V^\bb$-components of both sides.  
By \cite[Cor.~13.1.20, p.~324]{KS06} there is a non-canonical isomorphism $V^\bb \simeq \bigoplus_{i \in \Z} \Sigma^{-i} \big(h^i(V^\bb)\big)$. Thus we may (and will) assume that $V^\bb=\bigoplus_{i \in \Z} \Sigma^{-i}\big(V^i\big)$ is a complex with zero-differentials. Since $t$ is functorial, and it commutes with $\Sigma$, we find that 
$t_{V^\bb}=\bigoplus_{i \in \Z} \Sigma^{-i}\big(t_{V^i}\big)$. Unwinding the definitions of $s$ and $\pi^b$, the right-hand side is exactly $(s \circ \pi^b)(t)_{V^\bb}$.
\end{rem}


\subsection{On the center of $D^b(G)$ and other variants}

We first stress that the results from the previous section apply to $D(G)$ when $G$ is a $p$-adic Lie group, indeed we have:

\begin{lem}\label{ab4}
Let $G$ be a $p$-adic Lie group of dimension $d$ over $\Q_p$. Then $\Mod(G)$ is AB4*-$d$.
\end{lem}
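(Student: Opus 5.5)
The plan is to apply the Gabriel--Popescu criterion from the previous subsection: if $\mathcal{C}$ has a generator $Y$ with $\Ext_\mathcal{C}^i(Y,-)=0$ for all $i>d$, then ${\prod}^{(i)}V_\alpha=t\big(\prod_\alpha \Ext^i_\mathcal{C}(Y,V_\alpha)\big)$ vanishes for $i>d$, so $\mathcal{C}$ is AB4*-$d$. It therefore suffices to produce a single generator $Y$ of $\Mod(G)$ of cohomological dimension at most $d$.

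First, fix a torsion-free pro-$p$ open subgroup $U\subset G$. Such a $U$ exists: any $p$-adic Lie group contains a uniform pro-$p$ open subgroup, which is torsion-free for $p$ odd, and for $p=2$ we may pass to a sufficiently small open subgroup. Every open subgroup $U'\subseteq U$ is again torsion-free pro-$p$ of dimension $d$, so $k[\![U']\!]$ has global dimension $d$ and $H^i(U',-)=0$ on $\Mod(U')$ for $i>d$, as already used in the proof of Corollary \ref{maincor}. By Frobenius reciprocity for compact induction (compact induction from an open subgroup is exact and left adjoint to the exact restriction functor, so it preserves projectives and restriction preserves injectives), we get
$$\Ext^i_{\Mod(G)}(\Bbb{X}_{U'},V)\simeq \Ext^i_{\Mod(U')}(1,V|_{U'})=H^i(U',V),$$
and this vanishes for $i>d$.

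Second, take $Y:=\bigoplus_{U'\subseteq U}\Bbb{X}_{U'}$, where $U'$ runs over the set of open subgroups of $U$. This is a generator of $\Mod(G)$ by the lemma showing that $\{\Bbb{X}_{U'}\}_{U'\subset U}$ is a family of generators. Since $\Ext^i(\bigoplus_j Y_j,V)\simeq\prod_j\Ext^i(Y_j,V)$ (computed with an injective resolution of $V$), we obtain $\Ext^i_{\Mod(G)}(Y,-)=0$ for $i>d$. The displayed formula for derived products then gives ${\prod}^{(i)}=0$ for $i>d$, which is exactly the AB4*-$d$ condition.

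The only step needing care is the Frobenius isomorphism on $\Ext$ together with the vanishing of $H^i(U',-)$ for $i>d$ on all of $\Mod(U')$, including infinite-dimensional smooth modules. For the latter we use that smooth cohomology of $U'$ agrees with $\Ext$ over $k[\![U']\!]$ on discrete modules, or equivalently that it commutes with filtered colimits and hence reduces to finite-dimensional modules, where Lazard's theorem gives cohomological dimension $d$. Everything else is formal.
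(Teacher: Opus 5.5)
Your proposal is correct and takes essentially the same route as the paper. Both arguments use the generator $Y=\bigoplus_{U'\subset U}\Bbb{X}_{U'}$ for a torsion-free (in the paper: uniform) pro-$p$ open subgroup $U$. They identify $\Ext^i_{\Mod(G)}(Y,V)$ with $\prod_{U'}H^i(U',V)$ via Frobenius reciprocity, use Lazard's result that $k[\![U']\!]$ has global dimension $d$ to get vanishing for $i>d$, and feed this into the Gabriel--Popescu formula for derived products. One minor point: uniform pro-$p$ groups are torsion-free for $p=2$ as well, so your extra shrinking step in that case is unnecessary, though harmless.
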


\begin{proof}
Pick a uniform pro-$p$ open subgroup $U$ and consider the generator $Y:=\bigoplus_{U'\subset U} \Bbb{X}_{U'}$. Since each $U'$ in particular is torsion-free we find that
$$
\Ext_{\Mod(G)}^i(Y,V)=\prod_{U' \subset U} \Ext_{\Mod(G)}^i( \Bbb{X}_{U'},V)=\prod_{U' \subset U} H^i(U',V)=0
$$ 
for any $V$ in $\Mod(G)$ as long as $i>d$ (using that $k[\![U']\!]$ has global dimension $d$ by work of Lazard). 
\end{proof}

Let us summarize our findings after introducing a bit of notation. Since we have used $(-)_\text{red}$ for the quotient by the topologically nilpotent elements, we will use the notation 
$(-)_\text{nil}$ for the quotient by the 'actual' nilpotent elements. We remind the reader that $s$ denotes the splitting of $\pi^b$ introduced in the proof of 
Proposition \ref{DbC}.

\begin{thm}\label{bounded}
Let $G$ be a locally pro-$p$ group without proper open centralizers, and let $k$ be a field of characteristic $p$. Then we have the following.
\begin{itemize}
\item[(1)] The map $Z(G) \ni c \mapsto (s \circ \Phi)(c)$ induces an isomorphism of $k$-algebras
$$
\widehat{k[Z(G)]}_\text{nil} \overset{\sim}{\longrightarrow} Z^0(D^b(G))_\text{lred}.
$$
\item[(2)] The kernel of the restriction map $Z^*(D^+(G))\longrightarrow Z^*(D^b(G))$ has square zero;
\item[(3)] The kernel of $Z^*(D(G))\longrightarrow Z^*(D^+(G))$ has square zero if $G$ is a $p$-adic Lie group. 
\end{itemize}
\end{thm}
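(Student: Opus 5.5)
Parts (2) and (3) are formal consequences of what is already in place. Part (2) is Proposition \ref{sq0}(1) applied to the Grothendieck category $\mathcal{C}=\Mod(G)$. Part (3) is Proposition \ref{sq0}(2), because $\Mod(G)$ is AB4*-$d$ with $d=\dim_{\Q_p}G$ by Lemma \ref{ab4}. So the work lies in part (1).

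For part (1) I will compose Proposition \ref{DbC} with \cite[Thm.~1.1]{AS23}. Proposition \ref{DbC}, applied to $\mathcal{C}=\Mod(G)$, says that $\pi^b$ and its splitting $s$ induce mutually inverse isomorphisms
$$Z^0(D^b(G))_\text{lred}\cong Z(\Mod(G))_\text{lred}.$$
The theorem of \cite{AS23} gives the isomorphism $\Phi:\widehat{k[Z(G)]}\overset{\sim}{\rightarrow} Z(\Mod(G))$, using the no proper open centralizers hypothesis. It therefore suffices to show that $\Phi$ identifies the ideal of nilpotent elements of $\widehat{k[Z(G)]}$ with the ideal of locally nilpotent elements of $Z(\Mod(G))$. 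The resulting isomorphism $\widehat{k[Z(G)]}_\text{nil}\to Z^0(D^b(G))_\text{lred}$ is $s\circ\Phi$ modulo the relevant ideals. Since $k[Z(G)]$ is dense in the completion, this map is determined by $c\mapsto(s\circ\Phi)(c)$; this matches the formulation in the statement.

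One inclusion is clear: if $x^n=0$ then $\Phi(x)^n=0$, so $\Phi(x)$ acts nilpotently on every $V$. The converse is the main step. Suppose $\Phi(x)$ is locally nilpotent, i.e. $\Phi_V(x)$ is nilpotent for every smooth $V$, with an index that may depend on $V$. I need a uniform bound on the index. The natural trick is to test on a single large object. Take $V=\prod_{n\ge1}V_n$, the product in $\Mod(G)$, where $V_n$ witnesses that $\Phi(x)^n$ is nonzero. If some $\Phi(x)^n$ were nonzero on every object, such $V_n$ would exist for all $n$. Naturality of $\Phi(x)$ with respect to the projections $V\to V_n$ shows that $\Phi_V(x)^N$ induces $\Phi_{V_n}(x)^N$ on each factor. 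Nilpotence of $\Phi_V(x)$, say $\Phi_V(x)^N=0$, would then force $\Phi_{V_n}(x)^N=0$ for all $n$, which fails at $n=N$. Hence there is a uniform $N$ with $\Phi_V(x)^N=0$ for all $V$. This says $\Phi(x^N)=0$, and injectivity of $\Phi$ gives $x^N=0$. Products exist in the Grothendieck category $\Mod(G)$, as recalled above via Gabriel--Popescu, and the projections are epimorphisms in it, so the argument applies.

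Alternatively, one can use the coproduct $\bigoplus_n V_n$ together with the coprojections, since $\Phi(x)$ commutes with direct sums. This is the cleaner choice and avoids any exactness subtleties of products. I expect the uniformity of the nilpotency index to be the only genuinely new point. The remaining steps are bookkeeping: checking that $s$ and $\pi^b$ respect the ideals, which Proposition \ref{DbC} already does, and that $\Phi$ is compatible with them.
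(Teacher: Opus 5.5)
Your proof is correct. Parts (2) and (3) are handled exactly as in the paper. You cite Proposition \ref{sq0}, which is indeed the relevant result; the paper's text refers to Proposition \ref{DbC} at that point, evidently a slip.

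For part (1), both arguments use Proposition \ref{DbC} and the isomorphism $\Phi$ of \cite{AS23} to reduce to one claim: a locally nilpotent element $t=\Phi(x)$ of $Z(\Mod(G))$ is nilpotent. You prove this key step by a different route.

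\begin{itemize}
\item The paper evaluates $t$ on the single object $C=C_c^\infty(G,k)$, on which $\Phi_C$ is injective by \cite[Prop.~3.8]{AS23}. Nilpotence of $t_C=\Phi_C(x)$ then forces $x^N=0$ directly.
\item You instead obtain a uniform nilpotency index by testing on $\bigoplus_n V_n$ (or $\prod_n V_n$, whose projections are split epimorphisms). This works in any additive category with countable coproducts. It needs only that $\Phi$ is a ring isomorphism, not a faithful test object.
\end{itemize}

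The paper's route gives the sharper fact that nilpotence is already detected on one representation. Yours is more formal. It shows that the coincidence of locally nilpotent and nilpotent central elements is a general feature of categories with countable coproducts such as $\Mod(G)$, independent of the representation theory.
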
 

\begin{proof}
We know from Proposition \ref{DbC} that the restriction map $\pi^b$ induces an isomorphism
$$
Z^0(D^b(G))_\text{lred}\overset{\sim}{\longrightarrow} Z(\Mod(G))_\text{lred}.
$$
We claim that $t=(t_V)_{V \in \Mod(G)}\in Z(\Mod(G))$ is locally nilpotent (if and) only if $t$ is nilpotent. Indeed $t_C$ must be nilpotent, where $C:=C_c^\infty(G,k)$ is viewed as a $G$-representation via left translations, say. Write $t=\Phi(x)$ with $x \in \widehat{k[Z(G)]}$ where $\Phi: \widehat{k[Z(G)]} \overset{\sim}{\longrightarrow} Z(\Mod(G))$ is the isomorphism in  \cite[Thm.~6.10, p.~15]{AS23}. Since $t_C=\Phi_C(x)$ is nilpotent, and $\Phi_C$ is injective by part (1) of \cite[Prop.~3.8, p.~4]{AS23}, we deduce that $x$ and therefore $t$ must be nilpotent. 

In conclusion $\Phi$ gives rise to the isomorphism
$$
\widehat{k[Z(G)]}_\text{nil} \underset{\Phi}{\overset{\sim}{\longrightarrow}} Z(\Mod(G))_\text{nil}=\joinrel=Z(\Mod(G))_\text{lred}
\underset{s}{\overset{\sim}{\longrightarrow}} Z^0(D^b(G))_\text{lred}
$$
in part (1). Parts (2) and (3) follow immediately from Proposition \ref{DbC} and Lemma \ref{ab4}.
\end{proof}

We remind the reader that $Z^r(D^b(G))=0$ for all $r<0$ by Remark \ref{nonneg} applied to $\mathcal{C}=\Mod(G)$. 


\section{Remarks in the locally coherent case}

In this section we assume throughout that $G$ is a locally pro-$p$ group for which $\Mod(G)$ is locally coherent. By definition this means the subcategory of finitely presented objects 
$\Mod(G)^\text{fp}$ is abelian, which in our context essentially amounts to $\Mod(G)^\text{fp}$ being closed under taking kernels. We remind the reader that, following \cite[p.~122]{Ste75}, an object $V$ is said to be finitely presented if (i) $V$ is finitely generated; and (ii) every epimorphism $W \twoheadrightarrow V$, from a finitely generated $W$, has a finitely generated kernel. As $\Mod(G)$ is locally finitely generated, this is equivalent to $\Hom_{\Mod(G)}(V,-)$ commuting with filtered colimits (see \cite[Prop.~3.4, p.~122]{Ste75}). 

\begin{exmps}
If $G$ is compact, $\Mod(G)$ is even locally Noetherian (the $\Bbb{X}_{U'}$ give a family of finite-dimensional generators). By \cite[Prop.~2.1.4, p.~6]{AS24}, $\Mod(G)$ is also locally Noetherian for $G=\SL_2(\Q_p)$. When $G=\SL_2(\frak{F})$, for a general finite extension $\frak{F}/\Q_p$, $\Mod(G)$ is locally coherent by \cite[Thm.~1.2, p.~144]{Sho20}
(a different proof was presented in \cite{Sch26} which works more generally for all amalgams).

Some authors expect $\Mod(G)$ to be locally coherent for $G=\GL_n(\frak{F})$ in general; see \cite[Conj.~6.1.4]{EGH25} for example (which is formulated for more general coefficient rings $\mathcal{O}$). 
\end{exmps}

In modular representation theory it is common to replace $D(G)$ with the homotopy category of injectives $K(\text{Inj}(G))$. Under our standing 'local coherence' hypothesis (in this section) it is known that $K(\text{Inj}(G))$ is compactly generated, and its subcategory of compact objects is equivalent to $D^b(\Mod(G)^\text{fp})$; see
\cite[Thm.~2.2, Cor.~2.3, p.~126]{HP23} for instance. From this perspective it is natural to ask for the graded center of $D^b(\Mod(G)^\text{fp})$. In Remark \ref{weak} below we explain 
how to identify the latter category with $D_{\Mod(G)^\text{fp}}^b(G)$, which plays a role in the categorical $p$-adic local Langlands conjectures (see \cite[Conj.~6.1.15, p.~81]{EGH25} for example). 

\begin{rems}\label{finprojdim}
By \cite[Prop.~2.10, p.~129]{HP23} one can view $D(G)^c$ as a subcategory of $D^b(\Mod(G)^\text{fp})$ (after identifying the latter with the objects in $D^b(G)$ of type 
$\text{FP}_\infty$, see \cite[Def.~2.4, Lem.~2.8]{HP23}). Furthermore, if $G$ is a $p$-adic Lie group, \cite[Prop.~2.24, p.~133]{HP23} applies since $D(G)$ is compactly generated and 
$\Mod(G)$ is AB4*-$d$ by Lemma \ref{ab4}. Thus $V^\bb \in D^b(\Mod(G)^\text{fp})$ belongs to $D(G)^c$ precisely when $V^\bb$ has 'finite projective dimension' in the sense 
that $\Hom_{D(G)}(\Sigma^{-i}V^\bb,-)=0$ for $i>\!\!>0$ (see \cite[Def.~2.22, Rem.~2.23]{HP23}).

For example, if $G$ is a pro-$p$ group, $k$ is the only irreducible object of $\Mod(G)$ and so $D^b(\Mod(G)^\text{fp})$ is the thick envelope of $k$ in $D(G)$. 
Moreover, if $G$ is a compact $p$-adic Lie group which is pro-$p$, we have the equality $D^b(\Mod(G)^\text{fp})=D(G)^c$ precisely when $H^i(G,-)=0$ for $i>\!\!>0$
(e.g. when $G$ is torsion-free). 
\end{rems}

Here is the main observation of this section.

\begin{cor}\label{loccoh}
Let $G$ be a locally pro-$p$ group and let $k$ be a field of characteristic $p$. Suppose $G$ contains no proper open centralizers and $\Mod(G)$ is locally coherent.
Then there is an isomorphism of $k$-algebras
$$
\widehat{k[Z(G)]}_\text{red} \overset{\sim}{\longrightarrow} Z^0\big(D^b(\Mod(G)^\text{fp})\big)_\text{lred}
$$
given by $Z(G) \ni c \mapsto (s \circ \Phi)(c)$.
\end{cor}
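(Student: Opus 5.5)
The plan is to reduce to the abelian category $\mathcal{C}:=\Mod(G)^\text{fp}$ and then to Proposition \ref{addendum}. First, Proposition \ref{DbC} applies to any abelian category. Since $\Mod(G)$ is locally coherent, $\mathcal{C}$ is abelian, so restriction gives an isomorphism
$$
Z^0\big(D^b(\Mod(G)^\text{fp})\big)_\text{lred}\overset{\sim}{\longrightarrow} Z(\Mod(G)^\text{fp})_\text{lred},
$$
with inverse induced by the splitting $s$. It therefore suffices to identify $Z(\Mod(G)^\text{fp})_\text{lred}$ with $\widehat{k[Z(G)]}_\text{red}$ via $\Phi$ followed by restriction.

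For this I would apply Lemma \ref{serre} with $\mathcal{A}=\Mod(G)^\text{fp}$ inside $\Mod(G)$. The lemma requires that $\mathcal{A}$ contain a family of generators. This holds because each $\Bbb{X}_{U'}$ is finitely presented: $\Hom(\Bbb{X}_{U'},-)=(-)^{U'}$ commutes with filtered colimits. Lemma \ref{serre} then gives $Z(\text{Serre}_{\Mod(G)}(\Mod(G)^\text{fp}))_\text{lred}\overset{\sim}{\to} Z(\Mod(G)^\text{fp})_\text{lred}$.

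Next I would identify the Serre subcategories. We have $\Bbb{X}_U\in\Mod(G)^\text{fp}\subset\Mod(G)^\text{fg}$, so
$$
\text{Serre}(\Bbb{X}_U)\subseteq\text{Serre}(\Mod(G)^\text{fp})\subseteq\text{Serre}(\Mod(G)^\text{fg}),
$$
and Lemma \ref{fg} says the two outer terms coincide. Hence all three are equal.

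Finally, Proposition \ref{addendum} together with Lemma \ref{indepz} identifies $Z(\text{Serre}(\Bbb{X}_U))_\text{lred}$ with $k[Z(G)/Z(G)\cap U]_\text{red}\simeq\widehat{k[Z(G)]}_\text{red}$, via $\rho\circ\Phi$. Composing these isomorphisms gives the required one. One checks that it is induced by $c\mapsto(s\circ\Phi)(c)$: every map involved is restriction of the tuple $\Phi(c)$, and $s$ is the inverse on lred, since $\pi^b\circ s=\text{Id}$.

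The main obstacle is hidden in the first step. The isomorphism for $D^b(\mathcal{C})$ is not merely a restriction from $D(G)$, because the derived category of $\mathcal{C}$ might a priori differ from $D^b_{\text{fp}}(G)$. The argument above, however, only uses Proposition \ref{DbC} intrinsically for the abelian category $\mathcal{C}$, so no comparison with $D(G)$ is needed. The remaining point to verify is that the splitting $s$ for $\mathcal{C}$ sends $\Phi(c)|_{\mathcal{C}}$ to the tuple acting by $c$ degreewise. This is immediate from the definition of $s$.
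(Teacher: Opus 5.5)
Your proposal is correct and follows essentially the same route as the paper. You apply Proposition \ref{DbC} intrinsically to the abelian category $\Mod(G)^\text{fp}$, then use Lemma \ref{serre} (the $\Bbb{X}_{U'}$ being finitely presented) and Proposition \ref{addendum}. The only cosmetic difference is how you get $\text{Serre}_{\Mod(G)}(\Mod(G)^\text{fp})=\text{Serre}_{\Mod(G)}(\Mod(G)^\text{fg})$: you sandwich it via Lemma \ref{fg}, whereas the paper notes directly that every finitely generated $V$ is a quotient of the finitely presented object $\bigoplus_i \Bbb{X}_{U_i}$ and passes through $Z(\Mod(G)^\text{fg})_\text{lred}$.
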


\begin{proof}
Taking $\mathcal{C}$ in Proposition \ref{DbC} to be the abelian category $\Mod(G)^\text{fp}$ we know the restriction map $\pi^b$ gives an isomorphism
$$
Z^0\big(D^b(\Mod(G)^\text{fp})\big)_\text{lred} \overset{\sim}{\longrightarrow} Z\big(\Mod(G)^\text{fp}\big)_\text{lred}
$$
with inverse given by the section $s$. It remains to identify the target with $\widehat{k[Z(G)]}_\text{red}$ via $\Phi$. Since we know (by Proposition \ref{addendum} and its subsequent paragraph) that $\Phi$ identifies the latter with 
$Z\big(\Mod(G)^\text{fg}\big)_\text{lred}$, note the switch from fp to fg, it suffices to show that the restriction map
$$
Z\big(\Mod(G)^\text{fg}\big) \longrightarrow Z\big(\Mod(G)^\text{fp}\big)
$$
induces an isomorphism on $(-)_\text{lred}$. Observe that $\Bbb{X}_{U'}$ is finitely presented for all open subgroups $U'$ since 
$(-)^{U'}$ clearly commutes with filtered colimits. Thus $\Mod(G)^\text{fp}$ contains a family of generators $\Bbb{X}_{U'}$ for $\Mod(G)$, and therefore it suffices (by Lemma \ref{serre}) to show that 
$$
\text{Serre}_{\Mod(G)}\big(\Mod(G)^\text{fp}\big)=\text{Serre}_{\Mod(G)}\big(\Mod(G)^\text{fg}\big).
$$
The inclusion $\subseteq$ is trivial as fp $\Rightarrow$ fg. To check $\supseteq$ write a finitely generated $V$ as a quotient of a finite coproduct 
$\bigoplus_{i \in I} \Bbb{X}_{U_i}\twoheadrightarrow V$. This coproduct is finitely presented, hence any quotient thereof lies in the Serre subcategory generated by 
$\Mod(G)^\text{fp}$.
\end{proof}

\begin{rem}\label{weak}
In the previous proof, note that $\Mod(G)^\text{fp}$ is a \emph{weak} Serre subcategory of $\Mod(G)$ in the sense of \cite[Def.~12.10.1]{Sta}, i.e. if 
$V_1\rightarrow V_2\rightarrow V_3 \rightarrow V_4 \rightarrow V_5$ is an exact sequence with all of $V_1, V_2, V_4, V_5$ finitely presented, then so is the middle term $V_3$. 
It may not be Serre. 

As a result it makes sense to consider the triangulated subcategory $D_{\Mod(G)^\text{fp}}^b(G)$ of complexes $V^\bb$ for which $h^i(V^\bb)$ is finitely presented for all $i\in \Z$ (see 
\cite[Sect.~13.17]{Sta} and \cite[Def.~13.2.7, p.~329]{KS06} for example, but note that weak Serre subcategories are called 'thick' in \cite[Def.~8.3.21, Rem.~8.3.22, p.~184]{KS06}, which is inconsistent with our convention in Definition \ref{thck}). 

By \cite[Thm.~13.2.8, p.~329]{KS06}, or rather the paragraph immediately after its proof, the inclusion functor induces an equivalence of categories
$$
\delta^b: D^b(\Mod(G)^\text{fp}) \overset{\sim}{\longrightarrow} D_{\Mod(G)^\text{fp}}^b(G)
$$
because the criterion there is fulfilled: For any epimorphism $W \twoheadrightarrow V$ into a $V \in \Mod(G)^\text{fp}$ there exists a morphism $V' \rightarrow W$, with 
$V' \in \Mod(G)^\text{fp}$, such that the composition $V' \rightarrow V$ is an epimorphism. (To see this pick a finite set of generators $v_i \in V$, with preimages $w_i \in W^U$ for some $U$, then this gives rise to a morphism $V':=\bigoplus_i \Bbb{X}_U \rightarrow W$ with the desired property.)
\end{rem}

We finish with an example illustrating that the computation of the full graded center, i.e. without modding out the locally nilpotents, is quite complicated even in the simplest case 
$G=\Z/2\Z$.

\begin{exmp}
We consider $G=\Z/2\Z$ and the category $\Mod(G)$ of all $k[\Z/2\Z]$-modules, where $k$ is an arbitrary field. In this example we will describe $Z^*:=Z^*\big(D^b(\Mod(G)^\text{fg})\big)$ following \cite{KY11}.

Note that $k[\Z/2\Z]\simeq k \times k$ is semisimple when $\text{char}(k)\neq 2$, so $\Ext_{\Mod(G)}^i(V,W)=0$ for $i>0$. In particular $\Mod(G)$ is hereditary, and we conclude that $Z^*=Z^0=k \times k$.

The more interesting scenario is when $\text{char}(k)=2$, in which case $k[\Z/2\Z]\simeq k[X]/(X^2)$ is the ring of dual numbers. To describe $Z^*$ we adopt the notation $T(A,M)$ from \cite[p.~453]{KY11} for a ring $A$ and an $(A,A)$-bimodule $M$. This denotes the subalgebra $A \oplus M \subset \begin{psmallmatrix} A & M \\  0 & A \end{psmallmatrix}$ consisting of matrices with identical diagonal entries. In this notation \cite[Prop.~5.4, p.~463]{KY11} gives an isomorphism of graded $k$-algebras (note that $Z^*$ is commutative, not just graded-commutative, as $\text{char}(k)=2$)
$$
Z^*\simeq T\big(k[\zeta], \prod_{r\geq 0} k\big).
$$
Here the indeterminate $\zeta$ acts as zero on $\prod_{r\geq 0} k$, and the right-hand side is graded as follows: The degree zero piece is 
$Z^0\simeq T\big(k, \prod_{r\geq 0} k\big)$ and $\text{deg}(\zeta)=1$. More concretely, as graded vector spaces we have 
$$
Z^* \simeq T\big(k[\zeta], \prod_{r\geq 0} k\big)=\big(\underbrace{k \oplus \prod_{r\geq 0} k}_{Z^0}\big) \oplus \underbrace{k\zeta}_{Z^1} \oplus \underbrace{k\zeta^2}_{Z^2} \oplus \cdots \oplus \underbrace{k\zeta^r}_{Z^r} \oplus \cdots.
$$
Of course $\widehat{k[Z(G)]}_\text{red}=Z^0_\text{lred}=k$, but $Z^0$ contains an uncountable subspace $\prod_{r\geq 0} k$ of nilpotent elements (any element hereof clearly has square zero in $Z^0$). Furthermore $\zeta \in Z^1$ is \emph{not} locally nilpotent; $\zeta$ does not act nilpotently on the trivial representation $k$. (One can see this by directly using the definition of $\zeta=\zeta_1$ in \cite[Lem.~5.3, p.~462]{KY11} and evaluating it on the indecomposable complex denoted $A_m^\infty$ in \cite[p.~461]{KY11}. Alternatively one can apply Remark \ref{Linckel} below.)
\end{exmp}

This example is complemented by a general result of Linckelmann discussed in the next remark. (We remind the reader that $H^*(\Z/2\Z,k)=k[\zeta]$ is a polynomial algebra when $\text{char}(k)=2$.)

\begin{rem}\label{Linckel}
If $G$ is an arbitrary $p$-group, and $k$ is a field of characteristic $p$, \cite[Prop.~1.3, p.~490]{Lin09} asserts that the kernel $\mathcal{N}$ of the evaluation-at-$k$ map is \emph{nilpotent} and 
there is a canonical isomorphism 
$$
Z^*\big(D^b(\Mod(G)^\text{fg})/\mathcal{N} \overset{\sim}{\longrightarrow} H^*(G,k).
$$
(From the last paragraph of Remarks \ref{finprojdim} one can easily establish an analogue of this for \emph{pro}-$p$ groups $G$. 
In the pro-$p$ case the evaluation-at-$k$ map is still a split surjection, and its kernel at least consists of \emph{locally} nilpotent elements, thereby identifying 
$Z^*\big(D^b(\Mod(G)^\text{fg})_\text{lred}$ with $H^*(G,k)_\text{nil}$.)
\end{rem}


\appendix

\section{A hypercohomology spectral sequence for derived products}

We will make use of the following general fact: Suppose $\mathcal{C}'$ is an abelian category with enough injectives, and let $F: \mathcal{C}'\rightarrow \mathcal{C}$ be a left exact functor into some abelian category $\mathcal{C}$. Assume $F$ has finite cohomological dimension.
If $W^\bb$ is any complex from $\mathcal{C}'$ there is a convergent hypercohomology spectral sequence
$$
E_2^{r,s}=R^rF\big(h^s(W^\bb)\big) \Longrightarrow h^{r+s}\big(RF(W^\bb)\big).
$$
A great reference for this is \cite[Prop.~4.4.6, p.~223]{Ver96} and the fourth convergence criterion in part b of \cite[Prop.~4.5.2, p.~224]{Ver96}.

In our application $\mathcal{C}$ is Grothendieck, and $\mathcal{C'}=\mathcal{C}^I$ for some index set $I$. Here $\mathcal{C}^I$ denotes the category of $I$-tuples. The injective objects hereof are $I$-tuples of injectives from $\mathcal{C}$. 

To say that $\mathcal{C}$ has products indexed by $I$ means the 'diagonal' functor $\Delta: \mathcal{C}\rightarrow \mathcal{C}^I$ has a right adjoint 
$\Pi: \mathcal{C}^I \rightarrow \mathcal{C}$. We let $\Pi^{(i)}:=R^i\Pi$ be its $i^\text{th}$ derived functor, which vanishes for $i>d$ if $\mathcal{C}$ is $\text{AB4}^*-d$. 
Note that $\Pi$ is left exact since it admits a left adjoint, namely $\Delta$. 

\begin{prop}\label{spectral}
Let $\mathcal{C}$ be a Grothendieck category which is $\text{AB4}^*-d$ for some $d\geq 0$. Fix an index set $I$ and suppose
$(W_n^\bb)_{n \in I}$ is a family of objects in $D(\mathcal{C})$. Then there is a convergent spectral sequence
$$
E_2^{r,s}={\prod}^{(r)} h^s(W_n^\bb) \Longrightarrow h^{r+s}\big(\prod W_n^\bb \big)
$$
where $\prod W_n^\bb$ denotes the product in $D(\mathcal{C})$ (which exists since $\mathcal{C}$ is Grothendieck, as explained in the proof). 
\end{prop}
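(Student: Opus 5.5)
The strategy is to write the product in $D(\mathcal{C})$ as a right derived functor of the componentwise product functor, and then quote the general hypercohomology spectral sequence recalled at the start of this appendix. First, $\mathcal{C}^I$ is itself a Grothendieck category: it is abelian, products and coproducts are formed componentwise, and filtered colimits are exact. Hence $\mathcal{C}^I$ has enough injectives, namely $I$-tuples of injectives from $\mathcal{C}$. Every complex in $\mathcal{C}^I$ also admits a homotopically injective resolution, and we may choose one componentwise: take $(\mathbf{i}W_n^\bb)_{n\in I}$.

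Because $\Pi$ is right adjoint to $\Delta$ and $\Delta$ is exact, the derived functor $R\Pi: D(\mathcal{C}^I)=D(\mathcal{C})^I\to D(\mathcal{C})$ exists. It is computed by $R\Pi((W_n^\bb))=\prod_n \mathbf{i}W_n^\bb$, the product taken termwise in $\mathcal{C}$. Moreover, $\Delta$ and $R\Pi$ form an adjoint pair on derived categories, since $\Delta$ is exact and preserves quasi-isomorphisms. Concretely,
$$
\Hom_{D(\mathcal{C})}\big(X,\prod_n \mathbf{i}W_n^\bb\big)=\Hom_{K(\mathcal{C})}\big(X,\prod_n \mathbf{i}W_n^\bb\big)=\prod_n \Hom_{K(\mathcal{C})}(X,\mathbf{i}W_n^\bb)=\prod_n \Hom_{D(\mathcal{C})}(X,W_n^\bb).
$$
The first equality uses that a product of homotopically injective complexes is homotopically injective, which is immediate from the adjunction at the level of homotopy categories. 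Thus $R\Pi((W_n^\bb))$ is the product $\prod W_n^\bb$ in $D(\mathcal{C})$, which establishes the existence claim made in the statement.

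Next we apply the general fact from \cite[Prop.~4.4.6]{Ver96} with $\mathcal{C}'=\mathcal{C}^I$ and $F=\Pi$. The functor $\Pi$ is left exact and has cohomological dimension at most $d$ by the AB4*-$d$ hypothesis, since $R^i\Pi=\Pi^{(i)}=0$ for $i>d$. Cohomology in $\mathcal{C}^I$ is computed componentwise, so $h^s((W_n^\bb))=(h^s(W_n^\bb))_n$. The resulting spectral sequence therefore has
$$
E_2^{r,s}=\Pi^{(r)}\big((h^s(W_n^\bb))_n\big)\Longrightarrow h^{r+s}(R\Pi(W_n^\bb))=h^{r+s}\big(\prod W_n^\bb\big).
$$
Convergence follows from the fourth criterion of \cite[Prop.~4.5.2]{Ver96}, because $E_2$ is concentrated in the strip $0\le r\le d$.

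The main obstacle is applying this to \emph{unbounded} complexes. For bounded-below complexes the construction is the standard Cartan–Eilenberg argument, and the finite cohomological dimension of $\Pi$ is exactly what Verdier's criterion requires in the unbounded case. One must also check that Verdier's $RF$ of an unbounded complex agrees with our homotopically injective model. Both models are right derived functors of the same functor $\Pi$, hence canonically isomorphic. Alternatively, when $F$ has finite cohomological dimension one may compute $RF$ via $F$-acyclic (for example Cartan–Eilenberg injective) resolutions. I would cite this comparison rather than reprove it.
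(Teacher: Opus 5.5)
Your proposal is correct and follows the paper's proof essentially step for step. You identify $\prod W_n^\bb$ with $R\Pi((W_n^\bb)_{n\in I})$, computed on a componentwise K-injective resolution $(J_n^\bb)_{n\in I}$, and then apply Verdier's hypercohomology spectral sequence with $F=\Pi$, with convergence coming from $\Pi^{(r)}=0$ for $r>d$. The differences are cosmetic: the paper proves that $(J_n^\bb)_{n\in I}$ is K-injective in $\mathcal{C}^I$ via $\Hom^\bb_{\mathcal{C}^I}((A_n^\bb),(J_n^\bb))=\prod_n \Hom^\bb_{\mathcal{C}}(A_n^\bb,J_n^\bb)$, a step you only assert, and it cites the Stacks project for the termwise product being the product in $D(\mathcal{C})$, which you verify directly.
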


\begin{proof}
The previous general discussion with $F=\Pi$ gives a convergent spectral sequence
$$
E_2^{r,s}={\prod}^{(r)} h^s(W_n^\bb) \Longrightarrow h^{r+s}\big(R\Pi((W_n^\bb)_{n \in I})\big).
$$
Let us unwind the target: For each $n \in I$ choose a quasi-isomorphism $W_n^\bb \rightarrow J_n^\bb$ into $K$-injective complex from $\mathcal{C}$. The latter is Grothendieck so this can be done by \cite[Thm.~19.12.6]{Sta} for example.  
This gives a quasi-isomorphism $(W_n^\bb)_{n \in I}\rightarrow (J_n^\bb)_{n \in I}$ into a $K$-injective complex from 
$\mathcal{C}^I$. To see that $(J_n^\bb)_{n \in I}$ is indeed $K$-injective, just note that the morphism complex
$$
\Hom_{\mathcal{C}^I}^\bb((A_n^\bb)_{n \in I}, (J_n^\bb)_{n \in I})=\prod_{n \in I} \Hom_\mathcal{C}^\bb(A_n^\bb, J_n^\bb)
$$
is acyclic if each $A_n^\bb$ is (since products are exact in the category of abelian groups). We conclude that
$$
R\Pi((W_n^\bb)_{n \in I})=\Pi ((J_n^\bb)_{n \in I})
$$
is the object of $D(\mathcal{C})$ represented by the complex
$$
\cdots \longrightarrow \prod J_n^{i-1} \longrightarrow \prod J_n^{i} \longrightarrow \prod J_n^{i+1} \longrightarrow \cdots.
$$
By \cite[Lem.~13.31.5]{Sta} this (is $K$-injective and) gives the product $\prod_{n \in I} J_n^\bb$ in $D(\mathcal{C})$, or which amounts to the same, the product of the objects $W_n^\bb$ in the derived category $D(\mathcal{C})$. 
\end{proof}

\subsection*{Acknowledgments} 

This research was funded by the Deutsche Forschungsgemeinschaft (DFG, German Research Foundation) -- Project-ID 427320536 -- SFB 1442, as well as under Germany’s Excellence Strategy EXC 2044/390685587, Mathematics M\"{u}nster: Dynamics–Geometry–Structure.

C. S. thanks Universit\"{a}t M\"{u}nster for its hospitality during a productive stay in June, 2026.




\bigskip

\noindent {\it{E-mail addresses}}: {\texttt{pschnei@uni-muenster.de}, {\texttt{csorensen@ucsd.edu}}

\bigskip

\noindent {\sc{Peter Schneider, Mathematics M\"{u}nster, Universit\"a{}t M\"{u{nster, M\"{u}nster, Germany.}}

\bigskip

\noindent {\sc{Claus Sorensen, Department of Mathematics, UC San Diego, La Jolla, USA.}}

\end{document}